\theoremstyle{plain}
\newtheorem{theorem}{Theorem}[section]
\newtheorem{defn}[theorem]{Definition}
\newtheorem{lem}[theorem]{Lemma}
\newtheorem{cor}[theorem]{Corollary}
\newtheorem{prop}[theorem]{Proposition}
\newtheorem{lemma}[theorem]{Lemma}
\newtheorem{proposition}[theorem]{Proposition}
\theoremstyle{definition}
\newtheorem{exm}[theorem]{Example}
\newtheorem{example}[theorem]{Example}
\newtheorem{definition}[theorem]{Definition}
\newtheorem{rem}[theorem]{Remark}
\newcommand{\ca}{\mathcal}
\newcommand{\bC}{\ensuremath{\mathbb{C}}}
\newcommand{\bS}{\ensuremath{\mathbb{S}}}
\newcommand{\bP}{\ensuremath{\mathbb{P}}}
\DeclareMathOperator{\cO}{\mathcal{O}}
\newcommand{\cA}{\ensuremath{\mathcal{A}}}
\newcommand{\cB}{\ensuremath{\mathcal{B}}}
\newcommand{\cE}{\ensuremath{\mathcal{S}}}
\newcommand{\cF}{\ensuremath{\mathcal{Q}}}
\newcommand{\cK}{\ensuremath{\mathcal{K}}}
\newcommand{\cS}{\ensuremath{\mathcal{S}}}
\newcommand{\cV}{\ensuremath{\mathcal{V}}}
\DeclareMathOperator{\cQ}{\mathcal{Q}}
\DeclareMathOperator{\codim}{codim}
\DeclareMathOperator{\Ext}{\mathrm{Ext}}
\DeclareMathOperator{\Hom}{\mathrm{Hom}}
\DeclareMathOperator{\Sym}{\mathrm{Sym}}
\DeclareMathOperator{\rank}{\operatorname{rank}}
\newcommand{\Gr}{\mathrm{Gr}}
\DeclareMathOperator{\quot}{\mathsf{Quot}}
\newcommand{\Quot}{\quot_d(\bP^1,V,r)}
\title{A Borel--Weil--Bott theorem for Quot schemes on $\bP^1$}
\author[A.~Gautam]{Ajay Gautam}
	\address{Department of Mathematics, Scuola Internazionale Superiore di Studi Avanzati, Trieste}
    \email{agautam@sissa.it}
	\author[F.~Lin]{Feiyang Lin}
	\address{Department of Mathematics, University of California, Berkeley}
	\email{fylin@berkeley.edu}
    \author[S.~Sinha]{Shubham Sinha}
	\address{The Abdus Salam International Centre for Theoretical Physics, Trieste}
    \email{ssinha1@ictp.it}
\begin{document}

\begin{abstract}
We study the cohomology groups of tautological bundles on Quot schemes over the projective line, which parametrize rank $r$ quotients of a vector bundle $V$ on $\bP^1$. Our main result is an analogue of the Borel--Weil--Bott theorem for Quot schemes. As a corollary, we prove recent conjectures of Marian, Oprea, and Sam on the exterior and symmetric powers of tautological bundles. 
\end{abstract}
\maketitle

\section{Introduction} 
Let $C$ be a smooth projective curve and $V$ a vector bundle of rank $n$ on $C$. Let $\quot_d(C,V,r)$ denote the Quot scheme that parameterizes short exact sequences of sheaves 
\[
0\to S\to V\to Q\to 0 \quad \text{with } \deg Q = d, \ \mathrm{rank}\, Q = r.
\]
Consider the universal exact sequence over the product $\quot_d(C,V,r) \times C$,
\begin{equation}\label{eq:universalSeq}
    0 \to \cE \to p^*V\to \cF \to 0
\end{equation}
where $p$ and $\pi$ denote the projection maps to $C$ and $\quot_d(C,V,r)$, respectively. Given vector bundles $K$ and $M$ on $C$, the associated tautological complexes on $\quot_d(C,V,r)$ are defined as the pushforwards
\[
K^{\{d\}} := R\pi_*\!\left(p^*K \otimes \cE \right), \qquad
M^{[d]} := R\pi_*\!\left(p^*M \otimes \cF \right).
\]
When $K^{\{d\}}$ and $M^{[d]}$ are vector bundles, which occurs when the bundles $K, M$ are sufficiently positive, we call them tautological bundles. 

The study of the Quot scheme $\quot_d(C,V,r)$ comes in two distinct flavors. When $r = 0$, i.e. punctual Quot schemes, $\quot_d(C,V,0)$ is a smooth projective variety of dimension $nd$, and $L^{[d]}$ is a vector bundle of rank $d$ for all line bundles $L$ on $C$. In this case, the geometry of the Quot scheme has close analogies with the Hilbert scheme of points on surfaces, manifesting, for instance, in the cohomological behavior of tautological bundles. A closed formula for the Euler characteristic of the exterior powers of $L^{[d]}$ was computed in \cite{Oprea-Shubham}, and a description of their cohomology groups was conjectured. This conjecture was proved in \cite{Marian-Oprea-Sam} for $C\cong\bP^1$. For arbitrary genus, it was partially proved in \cite{Krug}, and fully resolved in \cite{Marian-Negut} by studying the derived category of the Quot scheme.

When $r > 0$, many of the techniques available in the punctual case no longer apply, and not much is known about the cohomology of tautological bundles, except for some Euler characteristic calculations in \cite{Oprea-Shubham,SinhaZhang1,SinhaZhang2}. In this article, we study the cohomology of tautological complexes on $\quot_d(\bP^1, V, r)$ for any vector bundle $V$ and any $r$. Our theorems may be viewed as Borel–Weil–Bott–type results for Quot schemes on $\bP^1$ in direct analogy with the classical theorem for Grassmannians. 

\subsection{Cohomology of tautological bundles}
We begin by fixing notation. For a partition $\lambda=(\lambda_1,\dots,\lambda_k)$, let $|\lambda|$ denote the sum of its parts, and let $\lambda^\dagger$ be its conjugate partition. We write $\bS^{\lambda}$ for the Schur functor associated to $\lambda$; for example,
\[
\bS^{(\ell)}V = \Sym^{\ell} V \qquad \text{and} \qquad \bS^{(1)^\ell}V= \wedge^\ell V,
\]
where $(1)^\ell= (1, \dots, 1)$ with $\ell$ parts. 

Let \(V\) be a vector bundle on \(\mathbb{P}^1\) of rank $n$; fix \(r<n\). By \cite{Popa_Roth}, there exists an integer
\(d_0=d_0(V,r)\) such that for all \(d\ge d_0\), the Quot scheme
\[\Quot\quad(\text{abbreviated } \quot_d)\]
is irreducible and generically smooth of dimension \(nd + rb + r(n-r)\), where \(b=-\deg V\).
In the special case \(V= \cO_{\mathbb{P}^1}^{\oplus n}\), one may take \(d_0=0\) and \(\Quot\) is smooth and irreducible for all \(d\ge 0\). In the remainder of the introduction, we assume \(d \ge d_0(V,r)\).

We now state our main result.
\begin{theorem}\label{thm:intro_vb_insertion}
Let $K$ and $M$ be vector bundles on $\bP^1$, and let $\mu,\lambda$ be partitions satisfying
\[
|\mu|+|\lambda| \;<\; \frac{nd+rb+n}{\,n-r\,}.
\]
\begin{itemize}[leftmargin=2em]
  \item[(i)] If $\mu\neq \emptyset$, then
  \[
  H^i\bigl(\Quot,\, \bS^\mu K^{\{d\}} \otimes \bS^{\lambda} M^{[d]}\bigr)=0
  \qquad\text{for all } i\ge 0.
  \]
  \item[(ii)] If $\mu=\emptyset$, there is a natural isomorphism of graded vector spaces
  \[
  H^\bullet\!\bigl(\Quot,\, \bS^{\lambda} M^{[d]}\bigr)
  \;\cong\; \bS^{\lambda} H^\bullet(V\otimes M).
  \]
\end{itemize}
\end{theorem}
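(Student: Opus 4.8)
The natural strategy is to realize $\quot_d$ as a subvariety of a product of Grassmannian bundles and push the classical Borel–Weil–Bott theorem forward along an explicit resolution. Concretely, I would use the well-known description of $\quot_d(\bP^1,V,r)$ as the zero locus (or a degeneracy locus) inside a "big" smooth ambient space built from $\mathrm{Gr}(r, H^0(V(m)))$-type data for $m \gg 0$: a rank-$r$ quotient $V \twoheadrightarrow Q$ of degree $d$ is equivalent to a point of the relative Grassmannian $\mathrm{Gr}(r, p^*(V(m)))$ over $\bP^1$ satisfying a vanishing condition that the induced map extends to a subbundle globally. Equivalently, I would compare $\cE$ and $\cF$ on $\quot_d \times \bP^1$ with the tautological sub/quotient bundles on the ambient Grassmannian bundle and use the universal sequence \eqref{eq:universalSeq} together with the projection $\pi$.

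**Key steps, in order.** First, reduce from the Schur-functor statement to a statement about the Koszul-type complex computing $R\pi_*$: since $\cE$ and $\cF$ restrict on each fiber $\{[q]\}\times \bP^1$ to bundles on $\bP^1$, the complex $K^{\{d\}}$, $M^{[d]}$ is represented by a two-term complex of bundles on $\quot_d$, and Schur functors of two-term complexes are computed by an explicit (Cauchy-type) filtration. Second, use the numerical hypothesis $|\mu|+|\lambda| < (nd+rb+n)/(n-r)$ to guarantee that the relevant twists are positive enough that, fiberwise over the base Grassmannian bundle, only $H^0$ on $\bP^1$ survives — this is where the bound on $|\mu|+|\lambda|$ is doing its work, controlling the total twisting against $\cO_{\bP^1}(-1)$-powers that appear in the Schur functor expansion. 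Third, on the ambient Grassmannian bundle, apply Borel–Weil–Bott relative to $\bP^1$: $\bS^\lambda$ of the tautological quotient has cohomology governed by the Bott algorithm, and the condition $\mu \neq \emptyset$ forces a "sub" factor to appear, whose Bott weight is non-dominant after the shift coming from the twist, yielding total vanishing for (i); when $\mu = \emptyset$ the surviving term is exactly $\bS^\lambda$ applied to $H^0(\bP^1, V\otimes M)$ (together with the higher cohomology of $V\otimes M$ organized by the same Schur functor), giving (ii). Fourth, I would check naturality of the isomorphism in (ii) by tracing it through the projection formula and the identification $R\pi_* \cF|_{\text{fiber}} = H^\bullet(\bP^1, Q \otimes M)$, assembling the pieces over the base.

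**The main obstacle.** The hard part will be step two combined with the degeneracy-locus structure: $\quot_d$ is not literally a Grassmannian bundle but sits inside one cut out by a section of a bundle (whose rank grows with $m$), so one must either (a) show the ambient cohomology computation descends via a Koszul resolution whose higher terms all vanish in the relevant range — which again needs the numerical bound, now applied to the product of the Schur-functor twists with the Koszul differentials — or (b) find an $m$-independent model. Making the bound $(nd+rb+n)/(n-r)$ come out exactly, rather than with a worse constant, is the delicate point: it requires carefully tracking that the worst twist appearing across \emph{all} terms of the combined Cauchy/Koszul filtration is controlled by $|\mu|+|\lambda|$ times the factor $n-r$ in the denominator (which is the rank of the subbundle $S$, equivalently the codimension data), and that the threshold $nd+rb+n$ is precisely where positivity on $\bP^1$ fails. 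I would expect to need a clean lemma isolating "for which twists $t$ does $R\pi_*(\cE^{\otimes a}\otimes \cF^{\otimes b}\otimes(\text{line bundle}))$ have no higher cohomology" and then feed the Schur-functor/Koszul bookkeeping into it.
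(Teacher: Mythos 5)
Your outline reproduces the broad skeleton of the actual argument (embed $\quot_d$ in an ambient space of Grassmannian type as a zero locus, run a Koszul resolution, apply Borel--Weil--Bott, and reduce general $K,M$ to two-term complexes), but both load-bearing steps are missing or rest on an incorrect mechanism. First, the vanishing of the higher Koszul terms is not a positivity-of-twists statement on $\bP^1$. In the model actually used (Str\o mme's embedding into $\Gr(k_1,N_1)\times\Gr(k_2,N_2)$ for two consecutive twists $m-1,m$, with $\quot_d$ the zero locus of a section of $\cK=\cA_1^\vee\otimes\cB_2\otimes H^0(\cO_{\bP^1}(1))$), the terms $\bigwedge^t\cK^\vee\otimes(\bS^\eta\cB_1\boxtimes\bS^\rho\cB_2)$ involve summands $\bS^\mu\cA_1\otimes\bS^\eta\cB_1\boxtimes\bS^\chi\cB_2^\vee$ where $|\mu|=t$ can be as large as $\rank\cK$, which grows with $m$; no naive positivity or ``only $H^0$ survives fiberwise'' argument applies to these. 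The exact threshold $(nd+rb+n)/(n-r)$ comes instead from a purely combinatorial contradiction: one assumes simultaneous nonvanishing of the Littlewood--Richardson coefficients $c_{\alpha,\beta}^{\mu^\dagger}c_{\alpha,\beta}^{\sigma}c_{\sigma,-\rho}^{\chi}$ and of the Borel--Weil--Bott cohomology on both Grassmannian factors, encodes the latter by an index $i$ (an $(r_1;\eta)$-index of $\mu$) and an index $j$ (a $k_2$-index of $\chi$), and then Horn/dominance inequalities force, in every case $i\lessgtr j$, an inequality of the shape $r(|\delta|+|\nu|)+(n-r)(|\gamma|+|\lambda|)\ge (nd+rb)(i-j)+n(i-j)^2\ge nd+rb+n$, contradicting the size bound. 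Your proposed bookkeeping (``worst twist controlled by $|\mu|+|\lambda|$ times $n-r$'') does not produce this, and I see no way to make a fiberwise-positivity argument yield the sharp constant.

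Second, part (i) cannot be obtained by asserting that a ``sub'' factor makes the Bott weight non-dominant: on the ambient space, bundles of the form $\bS^\mu\cA$ generically have nonzero (higher) cohomology, so there is no ambient vanishing to descend. The vanishing for $\mu\neq\emptyset$ is a statement about the Quot scheme itself, and in the paper it is proved by resolving $K^{\{d\}}$ (and $M^{[d]}$) by two-term complexes in $L_{m-1}^{\{d\}},L_m^{\{d\}}$ (resp.\ $L_{m-1}^{[d]},L_m^{[d]}$) and running a hypercohomology spectral sequence whose collapse requires two inputs from the consecutive-degree theorem: not only the higher-cohomology vanishing but also the \emph{natural} identification $H^0(\quot_d,\bS^\gamma L_{m-1}^{[d]}\otimes\bS^\lambda L_m^{[d]})\cong\bS^\gamma H^0(V\otimes L_{m-1})\otimes\bS^\lambda H^0(V\otimes L_m)$, which is what identifies the $q=0$ row of the spectral sequence with the (exact) Schur complex of the identity map of the trivial bundle $W_m$ and hence kills everything. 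Your sketch has no substitute for this step. Relatedly, your part (ii) glosses over the case $\deg M<0$ (and intermediate degrees $0\le\deg M<d+b$), where the answer involves $H^1(V\otimes M)$ and the conjugate partition, and whose proof uses the triangle $M^{\{d\}}\to R\pi_*p^*(V\otimes M)\to M^{[d]}$ together with the already-established part (i) vanishing for sub-sheaf insertions; that logical dependence of (ii) on (i) is absent from your outline.
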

\begin{rem}
       When $M$ splits as a direct sum of line bundles of sufficiently positive degrees, the complex $M^{[d]}$, and hence $\bS^{\lambda} M^{[d]}$, is a vector bundle. The isomorphism in part (ii) of Theorem~\ref{thm:intro_vb_insertion}
\[
H^0(\Quot,\bS^{\lambda} M^{[d]}) \cong \bS^{\lambda} H^0(V \otimes M)
\]
is induced geometrically, by considering the morphism 
\[
R\pi_*p^*( V\otimes M) \;\longrightarrow\; M^{[d]}
\]
arising from the universal sequence \eqref{eq:universalSeq}, taking the associated morphism of the Schur functor and then applying the global sections functor. 
\end{rem}
Theorem~\ref{thm:intro_vb_insertion} greatly generalizes the results in \cite{Marian-Oprea-Sam}, even for punctual Quot schemes, and in the positive-rank setting, proves \cite[Conjecture~1.3.1,\ 1.3.2]{Marian-Oprea-Sam} concerning the exterior and symmetric powers of the tautological bundles. Specifically,
\begin{cor}
   Let $L$ be a line bundle on $\bP^1$. For all
$  k < (nd+rb+n)/(n-r)$, we have the following isomorphisms of graded vector spaces
\begin{itemize}[leftmargin=2em]
\item[(i)] $H^\bullet\!\left(\quot_d, \wedge^k L^{[d]}\right) 
  \cong \wedge^kH^\bullet(V\otimes L) $;
\item[(ii)] $H^\bullet\!\left(\quot_d, \Sym^k L^{[d]}\right) 
  \cong \Sym^k H^\bullet(V\otimes L).$
\end{itemize}
\end{cor}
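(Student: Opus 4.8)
The plan is to obtain both isomorphisms as immediate specializations of part (ii) of Theorem~\ref{thm:intro_vb_insertion}. In all cases take $M = L$ and $\mu = \emptyset$, so that the numerical hypothesis $|\mu| + |\lambda| < (nd+rb+n)/(n-r)$ of the theorem reduces to $|\lambda| < (nd+rb+n)/(n-r)$.

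For (i), apply the theorem with $\lambda = (1)^k = (1,\dots,1)$ ($k$ parts). By definition of the Schur functor one has $\bS^{(1)^k} L^{[d]} = \wedge^k L^{[d]}$ and $\bS^{(1)^k} H^\bullet(V\otimes L) = \wedge^k H^\bullet(V\otimes L)$, while the hypothesis $|\lambda| = k < (nd+rb+n)/(n-r)$ is precisely the stated range for $k$. Theorem~\ref{thm:intro_vb_insertion}(ii) then gives
\[
H^\bullet(\quot_d, \wedge^k L^{[d]}) \;\cong\; \wedge^k H^\bullet(V\otimes L)
\]
as graded vector spaces. For (ii), run the same argument with $\lambda = (k)$, using $\bS^{(k)} L^{[d]} = \Sym^k L^{[d]}$ and $\bS^{(k)} H^\bullet(V\otimes L) = \Sym^k H^\bullet(V\otimes L)$, the hypothesis being again $|\lambda| = k < (nd+rb+n)/(n-r)$.

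The only thing to check carefully is a bookkeeping point: the right-hand sides are Schur functors of the $\bZ$-graded vector space $H^\bullet(V\otimes L) = H^0(V\otimes L)\oplus H^1(V\otimes L)$, and one must confirm that the sign and shift conventions for $\wedge^k$, $\Sym^k$ of a graded vector space agree with those of \cite[Conjecture~1.3.1,\ 1.3.2]{Marian-Oprea-Sam}. Because $\bP^1$ has cohomological dimension one, $H^\bullet(V\otimes L)$ is supported in degrees $0$ and $1$ only, so each such Schur functor expands as a finite direct sum of terms $\bS^{\alpha}H^0(V\otimes L)\otimes\bS^{\beta}H^1(V\otimes L)$, and the comparison is then routine. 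When $\deg L$ is large enough that $H^1(V\otimes L) = 0$, both $L^{[d]}$ and its Schur functors are honest vector bundles and the statement simplifies accordingly, with the degree-zero part induced geometrically as in the remark following Theorem~\ref{thm:intro_vb_insertion}. There is no substantive obstacle here: all of the content is in Theorem~\ref{thm:intro_vb_insertion} itself.
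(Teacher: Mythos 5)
Your proposal is correct and is exactly how the paper obtains this corollary: it is the immediate specialization of Theorem~\ref{thm:intro_vb_insertion}(ii) with $\mu=\emptyset$ and $\lambda=(1)^k$ (resp.\ $\lambda=(k)$), using $\bS^{(1)^k}=\wedge^k$ and $\bS^{(k)}=\Sym^k$. The bookkeeping remark about the graded vector space $H^\bullet(V\otimes L)$ being concentrated in degrees $0$ and $1$ is a reasonable extra caution but adds nothing beyond the paper's intended argument.
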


\begin{rem}\label{rem:vb_to_multi_line}
    In Theorem~\ref{thm:multiple_sub_quot}, we prove a more general result analogous to Theorem~\ref{thm:intro_vb_insertion}, for the tensor product of Schur functors applied to multiple vector bundles $K_1, K_2, \dots, K_s$ and $M_{1}, M_2, \dots, M_t$. Note that while Theorem~\ref{thm:multiple_sub_quot} is stated for line bundles, it is easy to upgrade the theorem to allow vector bundles since every vector bundle splits on $\bP^1$. 
    Below, we state the result in the case of trivial $V$, describing each cohomology group explicitly.
\end{rem}
\begin{cor}\label{thm:multiple_insertion_intro}
Let $L_1, L_2, \ldots, L_t$ be line bundles on $\bP^1$. For any tuple of partitions $\lambda^1, \lambda^2, \ldots, \lambda^t$ satisfying $\lvert \lambda^1 \rvert + \lvert \lambda^2 \rvert + \cdots + \lvert \lambda^t \rvert < (nd+n)/(n-r)$, we have
    \[
    H^D \left(\quot_d(\bP^1,\cO_{\bP^1}^{\oplus n}, r),\bigotimes_{j = 1}^t \bS^{\lambda^j}L_j^{[d]}\right) \cong 
        \bigotimes_{ \deg L_j < 0} \bS^{(\lambda^j)^{\dagger}} H^1(L_j^{\oplus n})
        \otimes 
        \bigotimes_{\deg L_j \geq 0 } \bS^{\lambda^j} H^0( L_j^{\oplus n}),
    \]
    where $D = \sum_{\deg L_j < 0}|\lambda^j|$, and all other cohomology groups are zero. 
\end{cor}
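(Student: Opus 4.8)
The plan is to deduce Corollary~\ref{thm:multiple_insertion_intro} from the general multi-insertion result (Theorem~\ref{thm:multiple_sub_quot}, referenced in Remark~\ref{rem:vb_to_multi_line}) together with part (ii) of Theorem~\ref{thm:intro_vb_insertion}, specialized to $V = \cO_{\bP^1}^{\oplus n}$. Since $V$ is trivial and all the $L_j$ are line bundles, the numerical hypothesis $\sum_j |\lambda^j| < (nd+n)/(n-r)$ is exactly the $b=0$ case of the bound appearing in those theorems, so all of their conclusions are available. The only genuine content beyond citing the general theorem is to make the right-hand side \emph{explicit}: one has to (a) split off the negative-degree line bundles from the nonnegative-degree ones, (b) identify the contribution of each factor, and (c) track the cohomological degree in which each factor lives, so as to pin down the single degree $D$ in which the tensor product is concentrated.

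The key steps, in order, are as follows. First, I would invoke Theorem~\ref{thm:multiple_sub_quot} (in its vector-bundle upgrade, as noted in Remark~\ref{rem:vb_to_multi_line}) to reduce $H^\bullet\bigl(\quot_d, \bigotimes_j \bS^{\lambda^j} L_j^{[d]}\bigr)$ to a Schur-functor expression built out of the cohomology of the line bundles $L_j \otimes V = L_j^{\oplus n}$ on $\bP^1$ — concretely, to $\bigotimes_j \bS^{\lambda^j} H^\bullet(L_j^{\oplus n})$, as graded vector spaces (this is precisely what part (ii) of Theorem~\ref{thm:intro_vb_insertion} gives for a single insertion, and its multi-insertion analogue for the tensor product). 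Second, I would compute each factor on $\bP^1$: if $\deg L_j \ge 0$ then $H^\bullet(L_j^{\oplus n}) = H^0(L_j^{\oplus n})$ sits in degree $0$, so $\bS^{\lambda^j} H^\bullet(L_j^{\oplus n}) = \bS^{\lambda^j} H^0(L_j^{\oplus n})$ in degree $0$; if $\deg L_j < 0$ then $H^\bullet(L_j^{\oplus n}) = H^1(L_j^{\oplus n})$ sits in degree $1$, and applying the Schur functor $\bS^{\lambda^j}$ to a vector space placed in odd degree introduces a Koszul sign shift, yielding $\bS^{(\lambda^j)^\dagger} H^1(L_j^{\oplus n})$ placed in degree $|\lambda^j|$. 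Third, I would combine: the graded tensor product is concentrated in the single total degree $D = \sum_{\deg L_j < 0} |\lambda^j|$, with value the stated tensor product, and is zero in all other degrees.

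The main technical point to get right — the only place where a careless argument would go wrong — is step two's sign/conjugation bookkeeping: the identity $\bS^\lambda(W[1]) \cong (\bS^{\lambda^\dagger} W)[\,|\lambda|\,]$ for a vector space $W$ shifted into cohomological degree $1$. This is the super-analogue of the Cauchy/Schur decomposition (equivalently, it follows from $\Sym(W[1]) = \wedge W [\deg]$ and $\wedge(W[1]) = \Sym W[\deg]$ applied plethystically), and it is exactly the mechanism by which a symmetric power becomes an exterior power when the input bundle has cohomology in odd degree. I would state it as a short lemma (or cite the standard reference) and then the corollary is immediate. A secondary, purely bookkeeping, point is to confirm that the degree bound $\sum_j|\lambda^j| < (nd+n)/(n-r)$ is literally the $b = \deg(\cO^{\oplus n}) \cdot(-1)/\!\cdots = 0$ instance of the hypothesis of Theorem~\ref{thm:multiple_sub_quot}, so that no additional positivity on the $L_j$ is needed beyond what is stated. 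I do not anticipate a serious obstacle here; the corollary is essentially an unwinding of the general theorem over the ground field $\bP^1$, where line-bundle cohomology is completely explicit.
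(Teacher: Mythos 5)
Your proposal is correct and follows essentially the paper's own route: the corollary is obtained by specializing Theorem~\ref{thm:multiple_sub_quot} (equivalently Theorem~\ref{thm:intro_vb_insertion}(ii)) to $V=\cO_{\bP^1}^{\oplus n}$, $b=0$, and then unwinding the graded Schur functor $\bS^{\lambda^j}H^\bullet(L_j^{\oplus n})$ via the rule that a Schur functor applied to a vector space in odd degree transposes the partition and shifts by $|\lambda^j|$ — exactly the convention the paper records in its remark on $\bS^{\lambda}H^\bullet(V\otimes M)$ and realizes concretely in the degree-$D$ term of the spectral sequence in the proof of Theorem~\ref{thm:multiple_sub_quot}(b). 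The only point worth making explicit, as you note, is the identity $\bS^{\lambda}(W \text{ in degree }1)\cong \bS^{\lambda^{\dagger}}W$ in degree $|\lambda|$, which is standard and is precisely what the paper uses.
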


Fixing the partitions and line bundles on $\bP^1$, we may view the corresponding tautological bundles on $\quot_d$ for any $d$, and study the asymptotic behavior of their cohomology groups as $d$ becomes large. With this perspective, we can view Theorem~\ref{thm:intro_vb_insertion} as a stabilization result. In particular, as predicted in \cite{Marian-Oprea-Sam},

\begin{cor}
For line bundles $L_1, L_2, \ldots, L_t$ and partitions $\lambda^1, \lambda^2, \ldots, \lambda^t$
    \[
\sum_{d=0}^{\infty} q^d\chi\left(\quot_d(\bP^1,\cO_{\bP^1}^{\oplus n},r),\bigotimes_{j=1}^{t} \bS^{\lambda^j}L_{j}^{[d]}\right)
\]
is given by a rational function with simple pole only possibly at $q=1$. 
\end{cor}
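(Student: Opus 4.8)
The plan is to deduce the statement directly from Corollary~\ref{thm:multiple_insertion_intro}, which shows that the Euler characteristics in question are \emph{eventually constant} in $d$. Fix the line bundles $L_1,\dots,L_t$ and the partitions $\lambda^1,\dots,\lambda^t$, and set $m=|\lambda^1|+\cdots+|\lambda^t|$. Let $d_1$ be the least nonnegative integer such that $m<(nd+n)/(n-r)$ for all $d\ge d_1$; this is an explicit function of $n$, $r$, and $m$ (one may take $d_1=\max\{0,\lfloor(m(n-r)-n)/n\rfloor+1\}$). For every $d\ge d_1$, Corollary~\ref{thm:multiple_insertion_intro} computes all cohomology of $\bigotimes_{j=1}^{t}\bS^{\lambda^j}L_j^{[d]}$ on $\quot_d(\bP^1,\cO_{\bP^1}^{\oplus n},r)$: the only nonvanishing group sits in the fixed degree $D=\sum_{\deg L_j<0}|\lambda^j|$ and equals the fixed finite-dimensional space
\[
W=\bigotimes_{\deg L_j<0}\bS^{(\lambda^j)^{\dagger}}H^1(L_j^{\oplus n})\ \otimes\ \bigotimes_{\deg L_j\ge 0}\bS^{\lambda^j}H^0(L_j^{\oplus n}),
\]
and neither $D$ nor $W$ depends on $d$. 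Hence $\chi\bigl(\quot_d,\textstyle\bigotimes_j\bS^{\lambda^j}L_j^{[d]}\bigr)=(-1)^{D}\dim W=:c$ is a constant for all $d\ge d_1$.

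It then remains to split the generating series at the threshold $d_1$:
\[
\sum_{d=0}^{\infty}q^d\,\chi\Bigl(\quot_d,\textstyle\bigotimes_{j=1}^t\bS^{\lambda^j}L_j^{[d]}\Bigr)
=\sum_{d=0}^{d_1-1}q^d\,\chi\Bigl(\quot_d,\textstyle\bigotimes_{j=1}^t\bS^{\lambda^j}L_j^{[d]}\Bigr)+c\sum_{d=d_1}^{\infty}q^d .
\]
The first term on the right is a polynomial in $q$: each of its finitely many coefficients is a well-defined integer, being the Euler characteristic of a bounded complex of coherent sheaves (a Schur functor applied to the perfect complexes $L_j^{[d]}=R\pi_*(p^*L_j\otimes\cF)$, tensored together) on the projective scheme $\quot_d$. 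The second term equals $c\,q^{d_1}/(1-q)$. Therefore the generating function equals $P(q)+c\,q^{d_1}/(1-q)$ for a polynomial $P$, i.e.\ a rational function whose only possible singularity is a simple pole at $q=1$ (and none at all if $c=0$), as claimed.

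I do not anticipate a genuine obstacle here: all the substance is contained in Corollary~\ref{thm:multiple_insertion_intro} (equivalently Theorem~\ref{thm:multiple_sub_quot}), and what remains is the elementary bookkeeping of the stabilization range and the routine check that each individual Euler characteristic is a finite integer, so that the ``head'' of the series is genuinely a polynomial. If one wished to avoid invoking the explicit description of $W$, it would suffice to use only that the cohomology, hence $\chi$, stabilizes in $d$ — already a consequence of Theorem~\ref{thm:intro_vb_insertion}(ii) together with its multi-bundle refinement.
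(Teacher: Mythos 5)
Your argument is correct and is essentially the paper's own: the corollary is stated there as a direct consequence of the stabilization result (Corollary~\ref{thm:multiple_insertion_intro}), since constancy of the Euler characteristic for $d$ beyond the explicit threshold makes the series a polynomial plus $c\,q^{d_1}/(1-q)$. Your bookkeeping of the threshold and the finiteness of the finitely many initial Euler characteristics is exactly the routine verification needed, so there is nothing to add.
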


The vanishing of higher cohomology in Corollary~\ref{thm:multiple_insertion_intro} for line bundles of nonnegative degree can be established without any restriction on the sizes of the partitions, at the expense of imposing degree constraints on the line bundles involved. The next result is analogous to \cite[Theorem~3]{Lin2025}; see Theorem~\ref{prop: onlyH0-intro} for general $V$.
\begin{theorem}\label{thm:H0_no_bound_V_trivial}
     Let $L_1, L_2, \ldots, L_t$ be line bundles on $\bP^1$ of degrees at least $d$. For any tuple of partitions $\lambda^1, \lambda^2, \ldots, \lambda^t$,  we have
    \[
     H^i \left(\quot_d(\bP^1,\cO_{\bP^1}^{\oplus n},r),\bigotimes_{j = 1}^t \bS^{\lambda^j}L_j^{[d]}\right) = 0\quad \text{for all } i\ge1. 
    \]
\end{theorem}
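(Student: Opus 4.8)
The plan is to use the hypothesis $\deg L_j\ge d$ to make each $L_j^{[d]}$ globally generated, and then to prove the higher‑cohomology vanishing by induction on $d$, the base case being the classical Borel--Weil--Bott theorem on a Grassmannian.

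\textbf{Step 1 (global generation).} Take $V=\cO_{\bP^1}^{\oplus n}$. Over a point $[0\to S\to V\to Q\to 0]$ of $\quot_d$ the bundle $S$ is a direct sum of line bundles of degrees in $[-d,0]$, and $Q$ is the direct sum of a torsion sheaf with line bundles of nonnegative degree; since $\deg L_j\ge d$, the sheaves $S\otimes L_j$, $V\otimes L_j$ and $Q\otimes L_j$ all have vanishing $H^1$. Hence $L_j^{\{d\}}:=\pi_*(p^*L_j\otimes\cE)$ and $L_j^{[d]}=\pi_*(p^*L_j\otimes\cF)$ are vector bundles compatible with base change, and pushing forward the universal sequence \eqref{eq:universalSeq} tensored by $p^*L_j$ along $\pi$ yields a short exact sequence
\[
0\longrightarrow L_j^{\{d\}}\longrightarrow H^0(\bP^1,\,V\otimes L_j)\otimes\cO_{\quot_d}\longrightarrow L_j^{[d]}\longrightarrow 0 .
\]
In particular $L_j^{[d]}$ is a quotient of a trivial bundle, and since Schur functors and tensor products preserve epimorphisms so is $\bigotimes_j\bS^{\lambda^j}L_j^{[d]}$. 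The content of the theorem is therefore the vanishing $H^{\ge1}\big(\quot_d,\bigotimes_j\bS^{\lambda^j}L_j^{[d]}\big)=0$, which I would prove by induction on $d$.

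\textbf{Step 2 (base case $d=0$).} Here $\quot_0\cong\Gr(r,n)$, and since a degree‑$0$ rank‑$r$ quotient of $V$ is the trivial bundle $\cO^{\oplus r}$, the universal quotient $\cF$ is the pullback of the tautological quotient bundle $Q_{\Gr}$; thus $L_j^{[0]}\cong Q_{\Gr}\otimes H^0(\bP^1,L_j)\cong Q_{\Gr}^{\oplus(\deg L_j+1)}$. Consequently $\bigotimes_j\bS^{\lambda^j}L_j^{[0]}$ is a direct sum of bundles $\bS^{\nu}Q_{\Gr}$, each of which has vanishing higher cohomology by classical Borel--Weil--Bott.

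\textbf{Step 3 (inductive step), and the main obstacle.} For $d\ge1$ I would run the ``add a point'' recursion underlying Theorem~\ref{thm:intro_vb_insertion}: let $Z$ be the incidence variety of elementary modifications $S\subset S'\subset V$ with $S'/S$ of length one at a point $x\in\bP^1$; it maps to $\quot_d\times\bP^1$, which is a $\bP^{r-1}$‑bundle over the locus where the universal quotient is locally free, and to $\quot_{d-1}\times\bP^1$, a $\bP^{n-r-1}$‑bundle there. Pulling $\bigotimes_j\bS^{\lambda^j}L_j^{[d]}$ back along the first map, expanding with the Grothendieck relation, relating the universal quotient of $\quot_d$ to that of $\quot_{d-1}$, and pushing forward to $\quot_{d-1}$ should express the sought cohomology through groups $H^\bullet\big(\quot_{d-1},\bigotimes_j\bS^{\mu^j}L_j^{[d-1]}\big)$ --- with the same $L_j$, still satisfying $\deg L_j\ge d>d-1$ --- so the induction closes. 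The crux is the vanishing of every higher direct image that appears along the way, along the two projective bundles and along the $\bP^1$‑factor: equivalently, every line bundle showing up on a fibre has nonnegative degree. This is exactly where $\deg L_j\ge d$ is indispensable: it is precisely the condition for $p^*L_j\otimes\cE$ to be globally generated on every fibre of $\pi$ (one has $S\otimes L_j=\bigoplus_i\cO(\deg L_j-b_i)$ with $0\le b_i\le d$), and this positivity --- rather than any bound on $\sum_j|\lambda^j|$ --- is what propagates through the correspondence; for line bundles of arbitrary degree one recovers only the bounded range of Theorem~\ref{thm:intro_vb_insertion}. (Equivalently, one can revisit the proof of Theorem~\ref{thm:intro_vb_insertion} and check that its numerical hypothesis is used only to guarantee these fibrewise vanishings, which become automatic once $\deg L_j\ge d$.) I expect these fibrewise positivity estimates, together with the bookkeeping of tautological bundles under the two projections, to be the technical heart; Steps 1 and 2 and the formal skeleton of Step 3 are routine.
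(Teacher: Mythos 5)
Your Steps 1 and 2 are correct but easy (global generation, and the identification $\quot_0\cong\Gr(r,n)$ with $L_j^{[0]}\cong\cB\otimes H^0(L_j)$); the entire content of the theorem sits in Step 3, and there you have a genuine gap rather than a proof. First, the argument is only asserted, not carried out: the "vanishing of every higher direct image" and the "bookkeeping of tautological bundles under the two projections" — which you yourself identify as the technical heart — are exactly what is missing, and nothing in the sketch shows how the recursion closes. Second, the geometric premise is wrong as stated. For the incidence variety $Z$ of elementary modifications $S\subset S'\subset V$ with $S'/S\cong\cO_x$, the fibre of $Z\to\quot_d\times\bP^1$ over a point $\bigl([S\subset V],x\bigr)$ consists of length-one subsheaves of $Q=V/S$ supported at $x$; such subsheaves are torsion, so over the locus where $Q$ is locally free at $x$ this fibre is \emph{empty}, not a $\bP^{r-1}$. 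Hence $Z$ does not dominate $\quot_d\times\bP^1$ as a projective bundle, the correspondence is not flat (fibre dimensions jump with the torsion of $Q$), and the Grothendieck-relation computation you plan to push through it is unavailable. (For $r=0$ every quotient is torsion and nested Quot schemes exist, but even there the known cohomology computations do not proceed by such a naive projective-bundle recursion.) Incidentally, Theorem~\ref{thm:intro_vb_insertion} is not proved in the paper by an "add a point" recursion, so there is no existing argument to "revisit" along the lines you suggest.

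For comparison, the paper's proof of this statement avoids any recursion in $d$. It first establishes Proposition~\ref{prop:cohomological_degree_no_size_constraints}: with $m\ge d+b$, the bundle $\bS^{\gamma}L_{m-1}^{[d]}\otimes\bS^{\lambda}L_{m}^{[d]}$ has no higher cohomology for \emph{arbitrary} partitions $\gamma,\lambda$ — this comes from the Str\o mme embedding, the Koszul resolution of $\cO_{\quot_d}$, Borel--Weil--Bott, and a bound on the cohomological degree (Lemma~\ref{lem:BWB_index_multi}(ii)) showing every higher term of the resolution contributes only in degrees that cancel against the Koszul shift. Then, for $\deg M_j\ge d$ (so $m=d$ when $V$ is trivial), each $M_j^{[d]}$ sits in the short exact sequence
\begin{equation*}
0\to L_{m-1}^{[d]}\otimes H^0(M_j\otimes L_{m-1}^\vee)\to L_{m}^{[d]}\otimes H^0(M_j\otimes L_{m}^\vee)\to M_j^{[d]}\to 0,
\end{equation*}
and the tensor product of the associated Schur complexes resolves $\bigotimes_j\bS^{\lambda^j}M_j^{[d]}$ by direct sums of bundles $\bS^{\gamma}L_{m-1}^{[d]}\otimes\bS^{\delta}L_{m}^{[d]}$, each with only $H^0$; the hypercohomology spectral sequence then gives the vanishing. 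If you want to salvage your outline, you would need either this route or a genuinely new argument for the inductive step; as written, the proposal does not prove the theorem.
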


The space of global sections in Theorem~\ref{thm:H0_no_bound_V_trivial} does not, in general, follow the description given in Corollary~\ref{thm:multiple_insertion_intro}. The size constraint on the partition $\lambda$ appearing in Theorem~\ref{thm:intro_vb_insertion} is quite close to optimal; see \cite[Theorem~12]{Oprea-Shubham} and Example~\ref{ex:size_constraints}. Theorem~\ref{thm:H0_no_bound_V_trivial} implies that the dimension of the space of global sections is equal to the Euler characteristic. The latter can be computed by other means, such as torus localization (see \cite[Theorem~3]{Oprea-Shubham} and \cite[Corollary~4.10]{SinhaZhang2} for exterior powers) or via wall-crossing to other moduli spaces \cite{DHL_FH}.

\subsection{Extension groups}
When the degree of a line bundle $L$ is sufficiently large, our method also allows the computation of the global extension groups for Schur functors of the vector bundle $L^{[d]}$. The answer is expressed in terms of Ext-groups of Schur functors of the universal quotient bundle on a Grassmannian (see Proposition~\ref{prop:Ext_groups_single}). The Borel--Weil--Bott theorem for the Grassmannian completely determines these Ext-groups and has the following implications.

\begin{theorem}\label{thm:Ext_implications}
       Assume $V$ splits as a direct sum of line bundles of nonpositive degrees. Let $L$ be a line bundle of degree at least $d+b$. Then 
    \begin{itemize}[leftmargin=2em]
        \item[(i)] For any nontrivial partition $\nu$ with $|\nu|<(nd+rb+n)/r$ and $\nu_1<n-r$,
    \begin{align*}
        H^i\left(\quot_d, (\bS^\nu L^{[d]})^\vee\right) = 
        0 \quad \text{for all } i \ge 0.
    \end{align*}
    \item[(ii)] Let $p_1,p_2,\dots,p_t$ and $k$ be nonnegative integers, and set $|p|=p_1+p_2+\cdots+p_t$.
Assume $k<(nd+rb+n)/(n-r)$, $|p|\le (nd+rb+n)/r$, and $t<n-r$. Then
    \[
    \Ext^i\left(\bigwedge^{p_1}L^{[d]}\otimes \cdots\otimes \bigwedge^{p_t}L^{[d]},\bigwedge^k L^{[d]}  \right)=\begin{cases}
        \bigwedge^{k-|p|}L^{[d]}& i=0,\ k\ge |p|;
        \\
        0& \text{otherwise}.
    \end{cases}
    \]
    A similar statement holds upon replacing all $\bigwedge $ with $\Sym$, and replacing the condition $t<n-r$ with $|p|<n-r$. \\
    \item[(iii)] For any partition $\lambda$, with $|\lambda|\le d$ and $\lambda_1<n-r$, 
    \[
    \Ext^i(\bS^\lambda L^{[d]},\bS^\lambda L^{[d]})=\begin{cases}
        \bC& i=0;\\
        0&i\ge 1.
    \end{cases}
    \]
    \item[(iv)] More generally, the set
    $
    \{\bS^{\lambda}(L^{[d]}): \ |\lambda|\le d ,\  \lambda_1<n-r\} 
    $
    forms an exceptional collection, ordered by the sizes of the partitions.
        \end{itemize}
\end{theorem}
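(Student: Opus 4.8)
The plan is to push every $\Ext$- and cohomology computation off of $\quot_d$ and onto a Grassmannian by means of Proposition~\ref{prop:Ext_groups_single} (whose hypotheses---$V$ a sum of line bundles of nonpositive degree and $\deg L\ge d+b$---are precisely the standing assumptions of the theorem), and then to read off the answers from the classical Borel--Weil--Bott theorem. Write $\mathcal{Q}$ for the tautological quotient bundle on the Grassmannian $\Gr$ furnished by that proposition. First I would record that a tensor product of Schur functors decomposes, by the Littlewood--Richardson rule, into a direct sum of Schur functors, so that each $\Ext$-group appearing in the statement becomes a finite sum of terms
\[
R\Hom_{\Gr}\!\bigl(\bS^\mu \mathcal{Q},\, \bS^\nu \mathcal{Q}\bigr)\;=\;H^\bullet\!\bigl(\Gr,\; \bS^\mu \mathcal{Q}^\vee \otimes \bS^\nu \mathcal{Q}\bigr),
\]
provided each $\bS^\mu$ thrown up by the decomposition still satisfies the column constraint under which Proposition~\ref{prop:Ext_groups_single} is valid; via Pieri's rule this is exactly where the hypothesis $t<n-r$ enters in the exterior case and $|p|<n-r$ in the symmetric case. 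The one computational input needed on the Grassmannian side is the following standard consequence of Bott's algorithm in the stable range: writing $\bS^\mu\mathcal{Q}^\vee\otimes\bS^\nu\mathcal{Q}$ as a sum of rational Schur bundles, each summand is acyclic unless it is an honest polynomial Schur bundle $\bS^\alpha\mathcal{Q}$, in which case $H^\bullet(\Gr,\bS^\alpha\mathcal{Q})\cong\bS^\alpha H^0(V\otimes L)$ and is concentrated in degree $0$---any genuinely negative part in a weight produces a collision after the $\rho$-shift and hence vanishing.

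Granting this, parts (i) and (iii) are short. In (i), $(\bS^\nu L^{[d]})^\vee$ transports to a rational Schur bundle with empty ``top'' and nonempty $\nu$ in the ``bottom'', so it has a genuinely negative part and is acyclic. In (iii), $\bS^\lambda\mathcal{Q}$ is an irreducible homogeneous bundle once $\lambda$ lies inside the relevant box (guaranteed by $\lambda_1<n-r$ and $|\lambda|\le d$), and the only polynomial constituent of $\bS^\lambda\mathcal{Q}^\vee\otimes\bS^\lambda\mathcal{Q}$ is the trivial one, occurring with multiplicity one; hence $\Hom_{\Gr}(\bS^\lambda\mathcal{Q},\bS^\lambda\mathcal{Q})=\bC$ and all higher self-extensions vanish. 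Transporting back along Proposition~\ref{prop:Ext_groups_single} yields both statements.

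For part (ii) I would decompose $\bigotimes_j\wedge^{p_j}\mathcal{Q}^\vee$ by Pieri, tensor with $\wedge^k\mathcal{Q}$, and decompose once more; by the input above only polynomial constituents survive, and a brief symmetric-function computation---extracting the coefficient of the elementary symmetric function $e_{|p|}$ in $\prod_j e_{p_j}$, and then contracting the resulting single column against $e_k$---shows that the unique survivor is $\wedge^{k-|p|}\mathcal{Q}$, with multiplicity one, and occurring exactly when $k\ge|p|$. Hence the $\Ext$-groups lie in degree $0$ and, via Proposition~\ref{prop:Ext_groups_single} together with Theorem~\ref{thm:intro_vb_insertion}(ii), are naturally the cohomology of $\wedge^{k-|p|}L^{[d]}$, which is the meaning of ``$\wedge^{k-|p|}L^{[d]}$'' in the statement; the symmetric-power version is identical, with $e$ replaced by the complete homogeneous symmetric functions $h$. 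Part (iv) then follows by combining (iii) with the one-sided vanishing $\Ext^\bullet(\bS^\lambda L^{[d]},\bS^\mu L^{[d]})=0$ whenever $|\mu|<|\lambda|$ (and whenever $|\mu|=|\lambda|$, $\mu\ne\lambda$): on $\Gr$ this is the statement that the sub-box $\{\lambda:|\lambda|\le d,\ \lambda_1<n-r\}$ of Kapranov's exceptional collection $\{\bS^\lambda\mathcal{Q}\}$ is again exceptional when ordered by the size of $\lambda$, a Bott computation of exactly the same shape as in (iii).

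The hard part will not be any individual step but the calibration of the numerical hypotheses: I would need to check that the stated size bounds are simultaneously strong enough for Proposition~\ref{prop:Ext_groups_single} to apply to \emph{every} Schur summand produced by the Littlewood--Richardson and Pieri expansions, and weak enough to keep all the relevant weights in the stable range where Bott's algorithm has the clean ``polynomial part in degree $0$, everything else acyclic'' behaviour, with no borderline weight that becomes regular only after several Weyl reflections and thereby contributes in a positive cohomological degree. Verifying this compatibility---and, in part (ii), isolating the single surviving mixed-tensor summand---is where the real work lies.
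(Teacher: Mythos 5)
Your proposal is essentially the paper's own proof: the paper likewise deduces (i) directly from Proposition~\ref{prop:Ext_groups_single}, obtains (ii) by a Borel--Weil--Bott computation of $\Hom(\bS^{\nu}\cB,\wedge^{k}\cB)$ (and its symmetric analogue) on the Grassmannian after the same Littlewood--Richardson/Pieri reduction in which $t<n-r$ (resp.\ $|p|<n-r$) guarantees $\nu_1<n-r$ for every summand, and gets (iii)--(iv) by observing that $\{\bS^{\lambda}\cB:\ |\lambda|\le d,\ \lambda_1<n-r\}$ is contained in Kapranov's full exceptional collection on the Grassmannian. The numerical calibration you defer is immediate for (i) and for (iii)--(iv) (there $r|\lambda|+(n-r)|\mu|\le nd<nd+rb+n$), while for (ii) it is exactly the point that the paper's one-line proof also leaves implicit, so your outline does not diverge from the published argument.
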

\begin{rem}
   Note that the exceptional collection above generates only a subcategory of the derived category of coherent sheaves on $\quot_d$. A full exceptional collection built from Schur functors of universal quotients on Grassmannian was studied by Kapranov \cite{Kapranov}. For Quot schemes of rank-zero quotients, a full exceptional collection (and, in higher genus, a semiorthogonal decomposition) of the derived category was obtained in \cite{toda} and \cite{Marian-Negut}. However, these methods do not directly extend to the case of higher-rank quotients.
\end{rem}

Part~(ii) of Theorem~\ref{thm:Ext_implications} recovers the results on extension groups in \cite{Marian-Oprea-Sam} for Quot schemes of rank-zero quotients, and partially proves \cite[Conjecture~1.3.3]{Marian-Oprea-Sam} in the higher-rank setting. Moreover, in Theorem~\ref{thm:two_insertions}, we establish a mixed version of Proposition~\ref{prop:Ext_groups_single} and Theorem~\ref{thm:intro_vb_insertion} for two line bundles of consecutive degrees; this may be regarded as our main technical result. We state a consequence for the duals of tautological bundles below.

\begin{theorem} \label{thm:dual_two_insertion_intro}
   Assume $V$ splits as a direct sum of line bundles of nonpositive degrees. Let $L_{m-1}=\cO_{\bP^1}(m-1)$ and $L_{m}=\cO_{\bP^1}(m)$ on $\bP^1$, with $m\ge d+b$. For any pair of partitions $\nu$ and $\mu$ satisfying
\[
\lvert \nu\rvert + \lvert \mu\rvert \le (nd + rb + n)/r
\quad\text{and}\quad
0 < \nu_1 + \mu_1 < n - r,
\]
all cohomology groups of the vector bundle
$
\bigl(\bS^\nu L_{m-1}^{[d]}\bigr)^\vee \otimes \bigl(\bS^\mu L_m^{[d]}\bigr)^\vee
$
vanish.   
\end{theorem}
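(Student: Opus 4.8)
The plan is to deduce Theorem~\ref{thm:dual_two_insertion_intro} from the mixed two-insertion result Theorem~\ref{thm:two_insertions}, our main technical input, together with the Borel--Weil--Bott theorem for the Grassmannian. \textbf{Step 1 (reduction to a Grassmannian).} Since $V$ is a direct sum of line bundles of nonpositive degree and $m\ge d+b$, both $L_{m-1}^{[d]}$ and $L_m^{[d]}$ are vector bundles on $\quot_d$, and the hypothesis $|\nu|+|\mu|\le (nd+rb+n)/r$ is precisely the range in which Theorem~\ref{thm:two_insertions} applies to dual insertions. Rewriting $(\bS^\nu L_{m-1}^{[d]})^\vee \otimes (\bS^\mu L_m^{[d]})^\vee$ as $\bS^\nu\!\big((L_{m-1}^{[d]})^\vee\big)\otimes \bS^\mu\!\big((L_m^{[d]})^\vee\big)$ via $\bS^\nu(E^\vee)\cong(\bS^\nu E)^\vee$, Theorem~\ref{thm:two_insertions} identifies
\[
H^\bullet\!\big(\quot_d,\ (\bS^\nu L_{m-1}^{[d]})^\vee \otimes (\bS^\mu L_m^{[d]})^\vee\big)
\]
with the cohomology of a homogeneous vector bundle on $\Gr(n-r,n)$. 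The feature that makes consecutive twists tractable is the point sequence $0\to L_{m-1}\to L_m\to L_m|_x\to 0$ on $\bP^1$, which links $L_{m-1}^{[d]}$ and $L_m^{[d]}$ through the rank-$r$ bundle obtained by restricting the universal quotient to $\quot_d\times\{x\}$; after pushing the universal sequence \eqref{eq:universalSeq} through the relevant pushforwards, $\nu$ and $\mu$ combine into Schur data on the rank-$(n-r)$ universal bundle, and the residual $m$- and $d$-dependence is absorbed into a twist by a power of the determinant of the rank-$r$ universal quotient. One is thus reduced to a computation of the form $H^\bullet\!\big(\Gr(n-r,n),\ \bS^\gamma\mathcal{S}^\vee\otimes(\det\mathcal{Q})^{\otimes a}\big)$, where $\mathcal{S}$ and $\mathcal{Q}$ are the universal sub- and quotient bundles on $\Gr(n-r,n)$, $\gamma$ is a dominant weight assembled from $\nu$ and $\mu$, and $a=a(d,m,b,n,r)$ is an explicit integer.

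\textbf{Step 2 (singular weight and conclusion).} Encode this bundle by the $\mathrm{GL}_n$-weight obtained by concatenating its $\mathcal{Q}$- and $\mathcal{S}^\vee$-blocks and adding $\rho=(n-1,n-2,\dots,1,0)$. The condition $0<\nu_1+\mu_1<n-r$ is exactly what forces two coordinates of this shifted weight to coincide: the total width $\nu_1+\mu_1$ that $\nu$ and $\mu$ contribute to the rank-$(n-r)$ block is strictly smaller than the rank, so the $\rho$-shift cannot separate the top of that block from the coordinates that are pinned to a constant value, while $\nu_1+\mu_1>0$ excludes the unique regular configuration (the zero weight, which would instead produce $H^0=\bC$). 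Hence the shifted weight is Bott-singular, Borel--Weil--Bott forces every cohomology group of $\bS^\gamma\mathcal{S}^\vee\otimes(\det\mathcal{Q})^{\otimes a}$ to vanish, and therefore, by the identification of Step~1, $H^i\!\big(\quot_d,\ (\bS^\nu L_{m-1}^{[d]})^\vee \otimes (\bS^\mu L_m^{[d]})^\vee\big)=0$ for all $i$.

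\textbf{Main obstacle.} The genuine difficulty is Step~1, that is, Theorem~\ref{thm:two_insertions} itself: one must transport the universal sequence through the iterated pushforwards and base-change arguments, organize the two consecutive twists via the point sequence on $\bP^1$, and — the delicate point — verify that no positivity is lost throughout the range $|\nu|+|\mu|\le(nd+rb+n)/r$, so that the Grassmannian identification is an honest isomorphism of graded vector spaces rather than merely a statement about Euler characteristics. By contrast, once the weight $\gamma$ and the twist $a$ are pinned down, Step~2 is a finite bookkeeping check with $\rho$-shifts, and the passage from the vanishing on $\Gr(n-r,n)$ back to $\quot_d$ is immediate.
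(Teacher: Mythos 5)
Your top-level instinct---that the statement should be deduced from Theorem~\ref{thm:two_insertions}---agrees with the paper, which obtains it as an immediate specialization of that theorem. But the deduction you describe is not the one that works, and it has a genuine gap. Theorem~\ref{thm:two_insertions} does not ``identify'' the cohomology of $(\bS^\nu L_{m-1}^{[d]})^\vee\otimes(\bS^\mu L_m^{[d]})^\vee$ with the cohomology of a homogeneous bundle on $\Gr(n-r,n)$; its part~(i) asserts outright vanishing whenever the highest weights involved have a nontrivial negative part. The correct, essentially one-line, argument is: write $(\bS^\nu L_{m-1}^{[d]})^\vee=\bS^{\eta}L_{m-1}^{[d]}$ with $\eta=(\emptyset,-\nu)$ and $(\bS^\mu L_m^{[d]})^\vee=\bS^{\rho}L_m^{[d]}$ with $\rho=(\emptyset,-\mu)$; with $\gamma=\lambda=\emptyset$ the size hypothesis of Theorem~\ref{thm:two_insertions} becomes $r(|\nu|+|\mu|)\le nd+rb+n$, the width hypothesis is $\nu_1+\mu_1<n-r$, and since $\nu_1+\mu_1>0$ at least one of the negative parts is nonempty, so part~(i) (with $m\ge d+b$) gives the vanishing directly. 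No further Borel--Weil--Bott computation is needed, and in particular your Step~2 is superfluous even on your own reading.

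The specific mechanism in your Steps~1--2 is also unsupported on its own terms. There is no morphism $\quot_d\to\Gr(n-r,n)$ through which the point sequence $0\to L_{m-1}\to L_m\to \cO_x\to 0$ would convert the two dual tautological bundles into a single bundle of the form $\bS^\gamma(\text{universal sub})^\vee\otimes(\det(\text{universal quotient}))^{\otimes a}$ on $\Gr(n-r,n)$: the restriction of the inclusion $\cE\subset p^*V$ to $\quot_d\times\{x\}$ need not remain a subbundle inclusion (quotients may have torsion at $x$), so no such classifying map exists in general. The paper's reduction is instead to the product of large Grassmannians $\Gr(k_1,N_1)\times\Gr(k_2,N_2)$ via the Str\o mme embedding, and the vanishing in the dual range comes from the Koszul-resolution analysis (Lemma~\ref{lem:Vanishing_V^t}) combined with Borel--Weil--Bott on those big Grassmannians. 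Correspondingly, the hypothesis $0<\nu_1+\mu_1<n-r$ does not act as a ``singular weight'' condition on $\Gr(n-r,n)$: in the actual proof it guarantees that a negative part is nonempty, ensures $\nu_1\le k_1$ and $\mu_1\le k_2$, and feeds the combinatorial case analysis of Lemma~\ref{lem:Vanishing_V^t}. Finally, a small point: the theorem is stated with $|\nu|+|\mu|\le(nd+rb+n)/r$ while Theorem~\ref{thm:two_insertions} requires strict inequality, so your claim that the hypothesis is ``precisely the range'' of that theorem inherits a boundary discrepancy that is worth flagging rather than asserting.
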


\subsection{Relation to quantum $K$-theory of Grassmannians}
When $V$ is the trivial vector bundle of rank $n$, the Quot scheme $\quot_d(\bP^1,V,r)$ is a smooth moduli space (see \cite{Stromme}) that compactifies the space of morphisms from $\bP^1$ to the Grassmannian $\Gr(r,n)$ of degree $d$. This viewpoint was taken up by Bertram \cite{Bertram} to study the quantum cohomology ring of $\Gr(r,n)$, and later to prove the Vafa–Intriligator formula \cite{Bertram2,Marian-Oprea} for intersection numbers on $\quot_d(C,V,r)$, which counts the number of maps from a fixed curve $C$ of genus $g$ to $\Gr(r,n)$ satisfying some incidence conditions.

Recently, \cite{SinhaZhang1,SinhaZhang2} studied \(K\)-theoretic invariants on \(\quot_d(\bP^1,V,r)\) in the context of the quantum \(K\)-ring of \(\Gr(r,n)\) (cf.~\cite{Buch_Mihalcea}). Fix a point \(x\in\bP^1\) and set
$$\cE_x:=\cE|_{\quot_d\times \{x\}},$$
a rank \(n-r\) vector bundle on \(\quot_d\). By \cite[Theorem~1.12]{SinhaZhang1}, the Euler characteristics of Schur functors of \(\cE_x\) compute \(K\)-theoretic Gromov–Witten invariants: For partitions \(\mu^1,\mu^2,\mu^3\) whose first parts are at most \(r\),
    \begin{equation}\label{eq:Gromov-Witten}
          \chi\left(\overline{\mathcal{M}}_{0,3}(\Gr(r,n),d),
	\bigotimes_{i\le 3} ev_{p_i}^*\bS^{\mu^{i}}(\cB)\right) = \chi\left(\quot_d(\bP^1,\cO_{\bP^1}^{\oplus n},r),\bigotimes_{i\le 3}
	\bS^{\mu^{i}}(\cE_x)\right)
    \end{equation}
Here \(\cB\) denotes the universal quotient bundle on \(\Gr(r,n)\), and \(\overline{\mathcal{M}}_{0,3}(\Gr(r,n),d)\) is the moduli space of \(3\)-pointed genus-\(0\) stable maps to \(\Gr(r,n)\) of degree \(d\) with \(ev_{p_i}\) the evaluation map at the \(i\)-th marking. In this paper, we prove the following consequence of Theorem~\ref{thm:intro_vb_insertion}.
 \begin{theorem}\label{cor:vanishing_E_x}
     Let $\mu^1,\mu^2,\dots\mu^t$ be partitions satisfying $0\ne|\mu^1|+\cdots+|\mu^t|<(nd+n)/(n-r)$, then
     $$H^i\left(\quot_d(\bP^1,\cO_{\bP^1}^{\oplus n},r),\bigotimes_{j=1}^t\bS^{\mu^j} \cE_x\right) =0\qquad\text{for all }i\ge 0.$$ 
 \end{theorem}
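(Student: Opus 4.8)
The plan is to present the bundle $\cE_x$ as a cokernel of a map between two tautological bundles $K^{\{d\}}$ attached to line bundles of consecutive degree, and then to read off the vanishing from the multi-insertion version of Theorem~\ref{thm:intro_vb_insertion} after resolving the Schur functors. For the first step, write $D=\quot_d\times\{x\}\subset\quot_d\times\bP^1$, a Cartier divisor with $\cO(-D)\cong p^*\cO_{\bP^1}(-1)$, on which $\pi$ restricts to an isomorphism, and let $\iota\colon D\hookrightarrow\quot_d\times\bP^1$. Fix an integer $m\ge d$. Tensoring the structure sequence $0\to\cO(-D)\to\cO\to\cO_D\to0$ with the vector bundle $\cE\otimes p^*\cO_{\bP^1}(m+1)$ produces
\[
0\longrightarrow \cE\otimes p^*\cO_{\bP^1}(m)\longrightarrow \cE\otimes p^*\cO_{\bP^1}(m+1)\longrightarrow \iota_*\cE_x\longrightarrow 0 .
\]
Every fiber of $\cE$ over $\quot_d$ is a subbundle of $\cO_{\bP^1}^{\oplus n}$, hence a direct sum of line bundles of degrees in $[-d,0]$, so twisting by $m\ge d$ kills $H^1$ on each fiber; thus $R^1\pi_*(\cE\otimes p^*\cO_{\bP^1}(m))=0$ and both $\cO_{\bP^1}(m)^{\{d\}}$ and $\cO_{\bP^1}(m+1)^{\{d\}}$ are honest vector bundles sitting in cohomological degree $0$. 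Applying $R\pi_*$ to the displayed sequence and using that $\pi$ restricts to an isomorphism on $D$, I obtain a short exact sequence of vector bundles on $\quot_d$,
\[
0\longrightarrow \cO_{\bP^1}(m)^{\{d\}}\longrightarrow \cO_{\bP^1}(m+1)^{\{d\}}\longrightarrow \cE_x\longrightarrow 0 .
\]

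Next I would resolve the Schur functors. For each $j$, the Schur complex of Akin--Buchsbaum--Weyman attached to the last sequence is a finite locally free resolution of $\bS^{\mu^j}\cE_x$ whose terms are direct sums of bundles $\bS^{\alpha}\cO_{\bP^1}(m)^{\{d\}}\otimes\bS^{\beta}\cO_{\bP^1}(m+1)^{\{d\}}$ with $|\alpha|+|\beta|=|\mu^j|$. Tensoring these resolutions over $j$ --- still a resolution, since all terms are locally free --- and expanding the resulting products of Schur functors of $\cO_{\bP^1}(m)^{\{d\}}$, respectively of $\cO_{\bP^1}(m+1)^{\{d\}}$, by the Littlewood--Richardson rule, I get a finite locally free resolution of $\bigotimes_{j}\bS^{\mu^j}\cE_x$ in which every term is a direct sum of bundles of the form
\[
\bS^{\gamma}\cO_{\bP^1}(m)^{\{d\}}\otimes\bS^{\delta}\cO_{\bP^1}(m+1)^{\{d\}},\qquad |\gamma|+|\delta|=\textstyle\sum_{j}|\mu^j| .
\]

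Finally, since $\sum_{j}|\mu^j|\neq 0$, in each such term at least one of $\gamma,\delta$ is nonempty, while $|\gamma|+|\delta|=\sum_{j}|\mu^j|<(nd+n)/(n-r)$. I would then invoke the multi-insertion theorem (Theorem~\ref{thm:multiple_sub_quot}, the several-bundle analogue of Theorem~\ref{thm:intro_vb_insertion}), applied with the two sub-insertion line bundles $\cO_{\bP^1}(m)$ and $\cO_{\bP^1}(m+1)$, no quotient insertions, and a nonempty sub-partition, to conclude that every term of the resolution has vanishing cohomology in all degrees; the hypercohomology spectral sequence of the resolution then forces $H^i\!\left(\quot_d,\bigotimes_{j}\bS^{\mu^j}\cE_x\right)=0$ for all $i$. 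The fiberwise splitting bound and the spectral-sequence bookkeeping are routine. The point needing care --- and the reason the single-bundle Theorem~\ref{thm:intro_vb_insertion} is not enough on its own --- is that the natural presentation of $\cE_x$ inevitably mixes Schur functors of \emph{two} distinct tautological bundles, so one genuinely needs the multi-insertion strengthening; and one must check that the Schur complex together with the Littlewood--Richardson expansion preserves the total number of boxes $\sum_{j}|\mu^j|$ (so the size hypothesis is inherited exactly) and never produces a term isomorphic to the structure sheaf, which would contribute a spurious $H^0$.
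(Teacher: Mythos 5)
Your proposal is correct and follows essentially the same route as the paper: the paper obtains the same presentation $0\to L_{m-1}^{\{d\}}\to L_{m}^{\{d\}}\to \cE_x\to 0$ (from $0\to L_{m-1}\to L_m\to\cO_x\to 0$, equivalent to your divisor-sequence derivation), resolves $\bS^{\lambda}\cE_x$ by the Schur complex of Proposition~\ref{prop:SchurExact}, and kills every term using the two-consecutive-twist vanishing (Lemma~\ref{lem:two_L^{m}}, which is exactly what underlies the Theorem~\ref{thm:multiple_sub_quot}(a) instance you invoke), handling several partitions via Littlewood--Richardson just as you do.
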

The Euler characteristics of Schur functors of $\cE_x$ were computed in \cite[Theorem~1.6 and 1.9]{SinhaZhang2}. The vanishing of Euler characteristics of these bundles was shown to provide enough relations to completely determine the quantum $K$-ring of $\Gr(r,n)$. Theorem~\ref{cor:vanishing_E_x} partially answers \cite[Question~1.11]{SinhaZhang2} by upgrading a subset of the vanishing statements for Euler characteristics to statements about individual cohomology groups. Using our results, one can already prove that the $K$-theoretic Gromov--Witten invariant in \eqref{eq:Gromov-Witten} vanishes for all sufficiently large degrees $d$. We expect that our methods may in future  shed light on positivity phenomena in the quantum $K$-theory of the Grassmannian; see, for instance, \cite{Buch_Mihalcea,BCMP}.

\subsection{Analogy with Hilbert scheme of points on surfaces}
Let $V$ be a vector bundle on a smooth projective curve $C$. The punctual Quot scheme, denoted $\quot_d(C,V)=\quot_d(C,V,0)$,  parametrizes quotients of $V$ supported at zero dimensional scheme of length $d$. Punctual Quot schemes have been extensively studied; we mention only a few relevant works here, including computations of cohomology groups \cite{marian_negut_coh}, descriptions of nef cones for divisors \cite{gangopadhyay-sebastian}, positivity results for tautological bundles \cite{oprea}, and cohomology of tangent bundle \cite{Biswas_Gangopadhyay_Sebatian}.

For a line bundle $L$ on $C$, the description of the cohomology groups for the tautological bundle $L^{[d]}$ and their exterior powers is explicitly given by the formula (see \cite[Theorem~3]{Marian-Negut} for the statement involving extension groups)
\[
H^\bullet(\quot_d(C,V),\wedge^k L^{[d]})\cong \wedge^kH^{\bullet}(V\otimes L)\otimes \mathrm{Sym}^{d-k}H^\bullet(\cO_C).
\]
There is a parallel story for the Hilbert schemes $X^{[d]}$ of $d$ points on a smooth projective surface $X$. For a line bundle $L$ on $X$, the tautological bundle $L^{[d]}$, defined similarly, has rank $d$. Indeed, the cohomology groups are given by the formula \cite{Scala_Coh_Hilb,Krug_McKay_corr}
\[
H^\bullet(X^{[d]},\wedge^k L^{[d]})\cong \wedge^kH^{\bullet}(L)\otimes \mathrm{Sym}^{d-k}H^\bullet(\cO_X).
\]
The proofs of the above statement in both cases rely on studying the bounded derived categories of sheaves $\mathbf{D}^b(X^{[d]})$ and $\mathbf{D}^b(\quot_d(C,V))$. 

The description of cohomology groups of the symmetric powers $\Sym^k L^{[d]}$, and general Schur functors, is not known to admit a simple formula, for both $X^{[d]}$ and $\quot_d(C,V)$. In special case, Euler characteristics \cite{Noah_Hilb_Scheme_KTheory} and the space of global sections \cite{Danila,LScala_symm} of the symmetric products of tautological bundles $L^{[d]}$ on $X^{[d]}$ admits a simple form. For instance, let $L$ be a positive degree line bundle on $X=\bP^2$, then for all $k<d+1$,
$$H^{0}\left(X^{[d]},\mathrm{Sym}^k\ L^{[d]}\right)=\Sym^kH^0(L)$$
while all higher cohomology groups vanish. We predict that an anolgue of formulas in Corollary~\ref{thm:multiple_insertion_intro} also hold for Hilbert scheme of points on $X\cong\bP^2$. We note here that tautological bundles on the Quot schemes on surfaces have also been studied in \cite{oprea-pandharipande,Arbesfeld_Johnson_lim_oprea_pandharipande}.

\subsection{Proof strategy}
For a fixed sufficiently large integer $m$, the Quot scheme on $\bP^1$ admits an embedding into a product of Grassmannians, constructed by Str\o mme \cite{Stromme},
\begin{equation*}
\iota_m:\ \quot_d \hookrightarrow \Gr(k_1,N_1)\times \Gr(k_2,N_2),
\end{equation*}
as the zero locus of a section of a vector bundle \(\cK\), expressed in terms of the universal bundles on the two Grassmannians. In the rank-zero setting, \cite{Marian-Oprea-Sam} uses the Koszul resolution of $\cO_{\quot_d}$ to reduce the study of the cohomology of exterior and symmetric powers of $L_m^{[d]}$ to the cohomology of universal bundles on the product of Grassmannians, where the Borel-Weil-Bott theorem applies.

We follow the approach of \cite{Marian-Oprea-Sam} to study the case of arbitrary rank, but the combinatorial analysis of the vector bundles that show up in the Koszul resolution becomes rather involved. To address this combinatorial proliferation, in Section~\ref{sec:Indices_of_partition} we provide a streamlined combinatorial criterion for detecting the non-vanishing of the cohomology of universal bundles on a Grassmannian, which may be of independent interest. A second ingredient is provided by Horn’s inequalities, which give criteria for the nonvanishing of the Littlewood–Richardson coefficients appearing in our expansions. By applying these two sets of criteria carefully, we show that all cohomology groups vanish for every term in the Koszul resolution except the first. We then conclude by computing the cohomology of the first term explicitly.

Our methods compute the cohomology of all Schur functors (subject to the relevant size constraints) of the tautological bundles associated to consecutive-degree line bundles, \(L_m^{[d]}\) and \(L_{m-1}^{[d]}\) (new even in the rank-zero case). This is especially advantageous because Schur functors of any tautological complex \(M^{[d]}\) can be expressed in terms of the Schur functors of the tautological bundles \(L_{m-1}^{[d]}\) and \(L_m^{[d]}\) in the derived category of sheaves on \(\quot_d\). We then leverage the explicit results about $\bS^{\alpha} L_{m-1}^{[d]}\,\otimes\, \bS^{\beta} L_{m}^{[d]}$ to obtain results for the Schur complexes \(\bS^{\lambda} M^{[d]}\), as well as their tensor products.

\noindent\textbf{Assumptions on characteristic.} Our results are valid for all algebraically closed fields of characteristic zero. For ease of notation, we use $\bC$ throughout. The main dependence on the characteristic comes from the use of the Borel-Weil-Bott theorem for Grassmannians. The fact that Schur functors of complexes respect quasi-isomorphisms, and therefore are well-defined on objects in the derived category, also depends on the characteristic zero assumption.

\subsection{Acknowledgements}
We thank Arvind Ayyer, David Eisenbud, Hannah Larson, Alina Marian, Leonardo Mihalcea, Noah Olander, Dragos Oprea, and Claudiu Raicu for helpful discussions. A.G. thanks the Simons Foundation for their support through the Joint SISSA/ICTP PhD fellowship.

\section{Combinatorial preliminaries}
\subsection{Notations} A partition $\lambda = (\lambda_1, \lambda_2, \dots, \lambda_k)$ is a non-increasing sequence of non-negative integers. The size of the partition is defined as $|\lambda| := \lambda_1 + \lambda_2 + \cdots + \lambda_k$. We can represent $\lambda$ graphically by its Young diagram, which contains $k$ rows of boxes with $\lambda_i$ boxes in the $i$th row. For example, the Young diagram of the partition $\lambda = (5,4,2,1)$, which has size $|\lambda| = 12$, is shown below:
$$
\ytableausetup{boxsize=1em}
    \begin{tikzpicture}
      \draw (0, 0)  node[anchor=north west] {%
        \ydiagram{5,4,2,1}
            *[*(lightgray!70)]{2,2}
      };
     \draw (-0.5, -0.8) node {$\lambda=$};
     \draw (5, 0)  node[anchor=north west] {%
        \ydiagram{4,3,2,2,1}
            *[*(lightgray!70)]{2,2}
      };
     \draw (4.5, -0.8) node {$\lambda^\dagger=$};
    \end{tikzpicture}
    $$
The \emph{conjugate partition} $\lambda^\dagger$ is obtained by transposing the Young diagram of $\lambda$. In the above example, $\lambda^\dagger = (4,3,2,2,1)$. 

The \emph{Durfee square} of a partition $\lambda$ is the largest square contained in the Young diagram of $\lambda$, and the side length of the Durfee square is called the \emph{rank} of the partition. In the above example, $\lambda$ has rank two.

\subsection{Schur functors}
Let $W \cong \mathbb{C}^k$ be a complex vector space. For any partition $\lambda = (\lambda_1, \lambda_2, \dots, \lambda_k)$, we denote by $\mathbb{S}^\lambda(W)$ the Schur functor, which corresponds to the irreducible polynomial $\mathrm{GL}(W)$-representation with highest weight $\lambda$. The character of the representation $\mathbb{S}^\lambda(W)$ is the Schur polynomial $s_\lambda(x_1, x_2, \dots, x_k)$, and its dimension is given by the hook-content formula:
\begin{equation*}
    \dim\bS^{\lambda}(W)=s_{\lambda}(\underbrace{1,1,\dots,1}_k) = \prod_{c\in \lambda}\frac{k+\mathrm{cont(c)}}{\mathrm{hook(c)}}
\end{equation*}
where for each cell $c=(i,j)$ in $\lambda$ is in the $i$th row and $j$th  column, $\mathrm{cont}(c) = j-i$ and $\mathrm{hook}(c)=1+(\lambda_i-j)+(\lambda_j^{\dagger}-i)$ is the hook length of $c$.

Every polynomial $\mathrm{GL}(W)$-representation can be expressed uniquely as a direct sum of Schur functors. Given two partitions $\lambda$ and $\mu$, one can decompose the tensor product as
\begin{equation}\label{eq:LRrule}
    \bS^{\lambda}(W) \otimes \bS^{\mu}(W) 
    = \bigoplus_\gamma \bS^{\gamma}(W)^{\oplus c_{\lambda, \mu}^\gamma},
\end{equation}
where the multiplicities $c_{\lambda, \mu}^\gamma$ are the \emph{Littlewood–Richardson coefficients}.  
The \emph{Littlewood–Richardson rule} provides a combinatorial description of these coefficients in terms of counting Littlewood–Richardson tableaux. In the next subsection, we list the properties of Littlewood–Richardson coefficients that we shall need in this article.
\\

Let $V$ be another complex vector space. Schur functors naturally appear in the context of \emph{Cauchy’s formula}: for any non-negative integer $t$, there is an isomorphism of vector spaces
\begin{equation}\label{eq:cauchyFormula}
     \bigwedge\nolimits^t (V \otimes W) 
     \cong \bigoplus_{\lvert \mu \rvert = t} \bS^{\mu}(V) \otimes \bS^{\mu^\dagger}(W),
\end{equation}
where the direct sum runs over all partitions $\mu$ of size $t$.

We will also require the following decomposition formula for the Schur functor of the direct sum of two vector spaces
\begin{equation}\label{eq: decompDirectSum}
   \bS^{\gamma}(V \oplus W) 
   \cong \bigoplus_{\alpha, \beta} 
   \left(\bS^{\alpha}(V) \otimes \bS^{\beta}(W)\right)^{\oplus c_{\alpha, \beta}^{\gamma}}.  
\end{equation}
Here the direct sum is taken over all partitions $\alpha$ and $\beta$ such that $|\alpha|+|\beta|=|\gamma|$.
\subsection{Schur functors associated to highest weights}\label{sec:highest_weight}
Irreducible rational representations of $\mathrm{GL}(W)$ are indexed by highest weights $$
\eta = (\eta_1, \eta_2, \dots, \eta_k),$$
which are non-increasing sequences of integers $\eta_1 \ge \eta_2 \ge \cdots \ge \eta_k$. Note that each $\eta_i$ is allowed to be negative.  
For any highest weight $\eta$, we denote the corresponding irreducible $\mathrm{GL}(W)$-representation by $\bS^{\eta}(W)$. For any $m\ge -\eta_k$, we can express $\bS^{\eta}(W)$ in terms of the usual Schur functor using the following isomorphism of $\mathrm{GL}(W)$-representations
\[
\bS^\eta W \cong \det(W)^{-m} \otimes \bS^{\eta + (m)^k}W,
\]
where $\det(W)$ is the determinant representation and $\eta + (m)^k = (\eta_1 + m, \eta_2 + m, \dots, \eta_k + m)$ is a partition. For any highest weight $\eta$, the dual of the corrresponding representation is given by $ \bS^{\eta} (W)^\vee = \bS^{-\eta}(W), $ where 
$$-\eta = (-\eta_k,-\eta_{k-1},\dots,-\eta_1).$$ 
The tensor product of two rational representations with highest weights $\eta$ and $\rho$ can also be described using the Littlewood-Richardson rule 
\begin{equation}\label{eq:LR_coeff_general}
    \bS^{\eta}W \otimes \bS^{\rho}W 
    = \bigoplus_\chi (\bS^{\chi}W)^{\oplus c_{\eta, \rho}^\chi},
\end{equation}
where $\chi$ runs over all the highest weights with $|\chi| = |\eta|+|\rho|$, and the multiplicity is given using the identity
\begin{align}\label{eq:modified_LR_coeff}
    c_{\eta,\rho}^{\chi} = c_{\eta+(m)^r,\rho+(k)^r }^{\chi +(m+k)^r},
\end{align}
where $m$ and $k$ are any two integers, such that $\eta+(m)^r,\rho+(k)^r$ and $\chi +(m+k)^r$ are partitions.

Given a highest weight $\eta = (\eta_1, \eta_2, \dots, \eta_k)$, it is sometimes useful to partition it into two parts, where one consists of nonnegative integers and one consists of nonpositive ones. For this, we use the notation $\eta = (\gamma, -\delta)$, where $\gamma, \delta$ are partitions. Note that due to the presence of zeros, there isn't necessarily a unique way to write $\eta$ in this form.
\begin{rem} 
Schur functors $\bS^{\eta}$ can be defined in the generality of vector bundles on a scheme. The identities in the above discussion, such as \eqref{eq:LRrule}, \eqref{eq:cauchyFormula}, and \eqref{eq: decompDirectSum}, all generalize verbatim. This is explained in e.g. \cite[Chapter 2]{weyman}. Schur functors associated to a partition can be generalized even further to perfect complexes, and it is well-defined up to quasi-isomorphisms. We explain the case of length two perfect complexes in Section \ref{sec:Schur_complex}.
\end{rem}

\subsection{Horn's inequalities}
There is a vast literature on the study of Littlewood--Richarson coefficients. In particular, a lot of it has been devoted to answering the following fundamental question: 
\begin{center}
   What are the conditions on the partitions $\alpha,\beta$ and $\gamma$ such that  $c_{\alpha,\beta}^{\gamma}\ne 0$?  
\end{center}
Knutson and Tao in their proof of the saturation conjecture \cite{KnutsonTao} showed that $c_{\alpha,\beta}^{\gamma} \ne 0$ if and only if $\alpha$, $\beta$, and $\gamma$ are the eigenvalues of Hermitian matrices $A$, $B$, and $C$ with $C = A + B$, respectively. We assume that the rank $N$ of the matrices to be greater than the number of parts in each of the partitions $\alpha$, $\beta$, and $\gamma$. Then using the identity for traces, $\mathrm{tr}A +\mathrm{tr}B=\mathrm{tr}C$, we get
\begin{equation*}
    \sum_{i=1}^{N} \alpha_i + \sum_{i=1}^{N} \beta_i = \sum_{i=1}^{N} \gamma_i.
\end{equation*}

In general, given $A$ and $B$ be $N\times N$ Hermitian matrices, define $C=A+B$. Let $\alpha=(\alpha_1\ge \cdots\ge \alpha_{N})$, $\beta=(\beta_1\ge \cdots \ge \beta_N)$ and $\gamma=(\gamma_1\ge \cdots\ge \gamma_N)$ denote the eigenvalues of the matrices $A$, $B$ and $C$ written in non-increasing order.

Let $I=(i_1\le \cdots\le i_s)$ and $J=(j_1\le \cdots\le j_s)$ be increasing sequence at most $N$ positive integers such that $i_s+j_s\le N+s$. Then we have
\begin{equation}\label{eq:horns}
    \sum_{i\in I}\alpha_i+\sum_{j\in J}\beta_j\ge \sum_{k\in K}\gamma_k
\end{equation}
where $K=(k_1\le \cdots\le k_s)$ with $k_p=i_p+j_p-p$ for $1\le p\le s$; see \cite{Fulton} for a detailed exposition.
 
In the following proposition, we enumerate various properties of Littlewood–Richardson coefficients that will be used later in the proofs.

\begin{prop}\label{prop:LR_coefficients_properties}
    Consider three partitions $\alpha$, $\beta$, and $\gamma$ such that $c_{\alpha,\beta}^\gamma \neq 0$. Then the following hold:
    \begin{enumerate}[label=(\roman*)]
        \item (Size constraint) 
        \[
        |\alpha| + |\beta| = |\gamma|;
        \]
        
        \item (Symmetry) 
        \[
        c_{\alpha, \beta}^{\gamma} = c_{\alpha^{\dagger}, \beta^{\dagger}}^{\gamma^{\dagger}};
        \]
        
        \item (Weyl inequality) For any positive integers $i$ and $j$, we have
        \begin{equation}\label{eq:WeylInequality}
                    \alpha_i + \beta_j \ge \gamma_{i+j-1};
        \end{equation}

        \item (Dominance inequality I) We have $\gamma\le \alpha+\beta$, that is, for all $s$,
        \begin{equation}\label{eq:dominance_inequality}
        \sum_{i=1}^s \gamma_i \le \sum_{i=1}^s \alpha_i + \sum_{i=1}^s \beta_i;
        \end{equation}

        \item (Dominance inequality II) We have $\alpha\cup\beta\le \gamma$, that is, for all $t$,
        \[
        \sum_{i=1}^t \alpha_i + \sum_{i=1}^t \beta_i \le \sum_{i=1}^{2t} \gamma_i.
        \]
    \end{enumerate}
Note that part (iii) and (iv) also holds when $\alpha,\beta$ and $\gamma$ are highest weights.
\end{prop}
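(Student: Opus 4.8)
The plan is to separate the two purely formal identities, (i) and (ii), from the three inequalities, (iii)--(v), which I obtain either from the Weyl/Horn inequalities for Hermitian matrices via the Knutson--Tao theorem or, more economically, from the dominance partial order on torus weights. For (i), recall that $s_\lambda$ is homogeneous of degree $|\lambda|$ in the ring of symmetric functions, so in $s_\alpha s_\beta=\sum_\gamma c_{\alpha,\beta}^\gamma s_\gamma$ only $\gamma$ with $|\gamma|=|\alpha|+|\beta|$ can occur; equivalently, the center of $\mathrm{GL}(W)$ acts on $\bS^\alpha(W)\otimes\bS^\beta(W)$ through a single character. For (ii), apply the fundamental involution $\omega$ of the ring of symmetric functions, a ring automorphism with $\omega(s_\lambda)=s_{\lambda^\dagger}$: applying $\omega$ to \eqref{eq:LRrule} gives $s_{\alpha^\dagger}s_{\beta^\dagger}=\sum_\gamma c_{\alpha,\beta}^\gamma s_{\gamma^\dagger}$, and comparing with the Littlewood--Richardson expansion of $s_{\alpha^\dagger}s_{\beta^\dagger}$ yields $c_{\alpha^\dagger,\beta^\dagger}^{\gamma^\dagger}=c_{\alpha,\beta}^\gamma$.

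For (iii), pad $\alpha,\beta,\gamma$ with zeros to a common length $N$; if $i+j-1$ exceeds the number of parts of $\gamma$ the inequality is trivial, since $\gamma_{i+j-1}=0\le\alpha_i+\beta_j$, so we may assume $N\ge i+j-1$. By the Knutson--Tao theorem (quoted above), the hypothesis $c_{\alpha,\beta}^\gamma\ne 0$ produces Hermitian $N\times N$ matrices $A,B$ with $C=A+B$ whose eigenvalue sequences are $\alpha,\beta,\gamma$. Now apply Horn's inequality \eqref{eq:horns} with $s=1$, $I=(i)$, $J=(j)$ --- the constraint $i_s+j_s\le N+s$ reads $i+j\le N+1$, which holds --- so that $K=(i+j-1)$ and we obtain $\alpha_i+\beta_j\ge\gamma_{i+j-1}$. (This is also just Weyl's classical inequality for a sum of Hermitian matrices.)

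For (iv), I would argue representation-theoretically: the diagonal-torus weights of $\bS^\alpha(W)$ all have size $|\alpha|$ and, sorted into non-increasing order, are $\le\alpha$ in dominance order, and likewise for $\beta$; hence every weight of $\bS^\alpha(W)\otimes\bS^\beta(W)$ is of the form $\mu+\nu$ with $\mu\le\alpha$ and $\nu\le\beta$, so by additivity of dominance it is $\le\alpha+\beta$. Since $c_{\alpha,\beta}^\gamma\ne 0$ forces $\gamma$ to occur as such a weight, $\gamma\le\alpha+\beta$, which is \eqref{eq:dominance_inequality} (alternatively, this is Horn's inequality \eqref{eq:horns} with $I=J=(1,2,\dots,s)$, giving $K=(1,2,\dots,s)$). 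For (v), combine (ii) and (iv): $c_{\alpha^\dagger,\beta^\dagger}^{\gamma^\dagger}=c_{\alpha,\beta}^\gamma\ne 0$, so (iv) applied to the conjugates gives $\gamma^\dagger\le\alpha^\dagger+\beta^\dagger$; since conjugation reverses the dominance order and $(\alpha^\dagger+\beta^\dagger)^\dagger=\alpha\cup\beta$ (indeed $\alpha^\dagger+\beta^\dagger=(\alpha\cup\beta)^\dagger$, both having $k$-th part equal to the number of parts of $\alpha\cup\beta$ that are $\ge k$), we get $\alpha\cup\beta\le\gamma$, and finally $\sum_{i=1}^t\alpha_i+\sum_{i=1}^t\beta_i\le\sum_{i=1}^{2t}(\alpha\cup\beta)_i\le\sum_{i=1}^{2t}\gamma_i$.

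For the highest-weight version of (iii) and (iv): given highest weights $\eta,\rho,\chi$ of length $r$ with $c_{\eta,\rho}^\chi\ne 0$, choose integers $m,k$ large enough that $\eta+(m)^r$, $\rho+(k)^r$, and $\chi+(m+k)^r$ are all genuine partitions. By \eqref{eq:modified_LR_coeff} the corresponding Littlewood--Richardson coefficient equals $c_{\eta,\rho}^\chi\ne 0$, so the partition cases of (iii)/(iv) apply to this triple; subtracting the common determinant shift --- which cancels because both sides of each inequality involve the same number ($i+j-1$, resp.\ $s$) of shifted entries --- returns the inequality for $\eta,\rho,\chi$. I do not anticipate a genuine obstacle: the content is an assembly of standard facts, and the only care needed is the bookkeeping of padding the partitions to a common length so that the hypothesis of \eqref{eq:horns} holds, and, in the rational case, checking that the shift cancels and that $m,k$ are chosen large enough to land among partitions.
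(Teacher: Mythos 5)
Your argument is correct, and it overlaps with the paper's proof on parts (i)--(iv) while genuinely diverging on (v). The paper treats (i) and (ii) as standard, and obtains (iii), (iv), and (v) all from Horn's inequality \eqref{eq:horns} via Knutson--Tao: (iii) with $I=\{i\},J=\{j\}$ and (iv) with $I=J=\{1,\dots,s\}$ exactly as you do (your alternative weight-theoretic proof of (iv), bounding the weights of $\bS^{\alpha}(W)\otimes\bS^{\beta}(W)$ by $\alpha+\beta$ in dominance order, is equally valid and arguably more elementary, though the passage from $\mu\le\alpha$, $\nu\le\beta$ to $\mathrm{sort}(\mu+\nu)\le\alpha+\beta$ deserves the one-line partial-sum justification rather than just ``additivity''). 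For (v), the paper applies \eqref{eq:horns} with the shifted window $I=J=(t+1,\dots,t+s)$ for $s$ large and subtracts the resulting inequality from the size equality (i); you instead deduce the full dominance statement $\alpha\cup\beta\le\gamma$ from the conjugation symmetry (ii) combined with (iv), using that $(\alpha\cup\beta)^{\dagger}=\alpha^{\dagger}+\beta^{\dagger}$ and that conjugation reverses dominance on partitions of equal size, and then specialize to the stated partial-sum inequality. Both routes are sound; yours buys a stronger conclusion (the full dominance $\alpha\cup\beta\le\gamma$) from facts already established, while the paper's stays entirely inside the Horn framework and avoids invoking order-reversal under conjugation. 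You also spell out the highest-weight extension of (iii) and (iv) via the determinant shift \eqref{eq:modified_LR_coeff}, which the paper asserts without comment; your observation that the shifts cancel on both sides of each inequality is exactly the needed check.
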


\begin{proof}
    Statements (i) and (ii) are standard facts about Littlewood–Richardson coefficients. Statements (iii) and (iv) both follow from \eqref{eq:horns}. Specifically, statement (iii) can be obtained by letting $I = \{i\}, J = \{j\}$. Statement (iv) can be obtained by letting $I = J = \{1,\dots, s\}$. We now explain the proof of (v). Taking rank of matrices, $N$,  to be sufficiently large, the inequality in \eqref{eq:horns} implies 
    \[
    \sum_{p = t+1}^{t+s} \alpha_p + \sum_{p = t+1}^{t+s} \beta_p \ge \sum_{p = 2t+1}^{2t+s} \gamma_p
    \]
    for any $2s + t \le N + t$. Now take $s$ larger than the number of parts in $\alpha$, $\beta$, and $\gamma$. Then (v) follows by subtracting the above inequality from the equality in (i).
\end{proof}

\section{Indices of partitions}\label{sec:Indices_of_partition}
In this section, we focus on certain cases of the Borel-Weil-Bott theorem, and pictorially describe the combinatorial conditions on the partitions involved when a tautological bundle has a nontrivial cohomology group. The notions of the $t$-index and the $(t;\eta)$-index introduced in this section will be key ingredients in our proofs later.
\subsection{Borel-Weil-Bott}
Let $\Gr(k, N)$ denote the Grassmannian of $k$-dimensional subspaces of the vector space $\mathbb{C}^N$. Consider the universal exact sequence over $\Gr(k, N)$:
\[
0 \to \mathcal{A} \to \mathcal{O}_{\Gr(k, N)}^{\oplus N} \to \mathcal{B} \to 0,
\]
where $\mathcal{A}$ is the universal subbundle of rank $k$ and $\mathcal{B}$ is the universal quotient bundle of rank $N-k$. The sheaf cohomology of Schur functors applied to $\mathcal{A}$ and $\mathcal{B}$ is described by the Borel--Weil--Bott theorem \cite{Bott}. We state below the version presented in \cite[Corollary 4.1.9]{weyman}.

\begin{theorem}[Borel-Weil-Bott]\label{thm:BWB}
    For any two non-increasing sequences of integers
\[
\rho = (\rho_1, \dots, \rho_k) \quad \text{and} \quad \chi = (\chi_1, \dots, \chi_{N - k}),
\]
the vector bundle $\mathbb{S}^\rho\cA^\vee \otimes \mathbb{S}^\chi\cB^\vee$ on $\Gr(k, N)$ has at most one non-vanishing cohomology group. Let
\[
\omega := (\rho, \chi) + (N - 1, N - 2, \dots, 0)
\]
be the component-wise sum. If $\omega$ contains a repetition, then all cohomology groups of $\mathbb{S}^\rho(\cA^\vee) \otimes \mathbb{S}^\chi(\cB^\vee)$ vanish. Otherwise, let $\sigma$ be the permutation that sorts $\omega$ into a strictly decreasing sequence, and 
\[
\gamma := \sigma \circ \omega - (N - 1, N - 2, \dots, 0),
\]
be a non-increasing sequence, then
\begin{align*}
H^D\left(\Gr(k, N),\ \mathbb{S}^\rho\cA^\vee \otimes \mathbb{S}^\chi\cB^\vee \right)
=
\mathbb{S}^\gamma(\bC^N)^\vee
\end{align*}
where the cohomological degree $D=\ell(\sigma)$ is the length of the permutation $\sigma$, and all other cohomology groups are zero.
    \end{theorem}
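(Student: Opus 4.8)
The final statement is the Borel--Weil--Bott theorem for the Grassmannian; it is classical (due to Bott \cite{Bott}), and the plan is to outline the algebro-geometric proof, in the spirit of \cite[Chapter~4]{weyman}, reducing everything to the $\bP^1$-bundle case. First I would reduce to a single line bundle on the full flag variety. Let $q\colon \mathrm{Fl}\to\Gr(k,N)$ be the relative flag bundle that refines $\cA$ to a complete flag and $\cB$ to a complete flag; then $\mathrm{Fl}$ is canonically the full flag variety of $\bC^N$, and $q$ is a tower of $\bP^1$-bundles with fibre $\mathrm{Fl}(\bC^k)\times\mathrm{Fl}(\bC^{N-k})$. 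On $\mathrm{Fl}$ the universal complete flag has line-bundle subquotients $\mathcal{L}_1,\dots,\mathcal{L}_N$, the first $k$ subquotients of $\cA^\vee$ and the last $N-k$ subquotients of $\cB^\vee$; write $\mathcal{L}(\mathbf m):=\mathcal{L}_1^{\otimes m_1}\otimes\cdots\otimes\mathcal{L}_N^{\otimes m_N}$ for $\mathbf m\in\bZ^N$. Since $\rho$ and $\chi$ are non-increasing, $\mathcal{L}(\rho,\chi)$ has non-negative degree on every $\bP^1$-fibre of $q$, so Borel--Weil applied fibrewise (a special case of the algorithm in the next paragraph, run on the fibres, where no vanishing and no shift occur) gives $Rq_*\,\mathcal{L}(\rho,\chi)\cong \bS^\rho\cA^\vee\otimes\bS^\chi\cB^\vee$ concentrated in degree $0$. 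Hence $H^\bullet\bigl(\Gr(k,N),\bS^\rho\cA^\vee\otimes\bS^\chi\cB^\vee\bigr)\cong H^\bullet(\mathrm{Fl},\mathcal{L}(\rho,\chi))$, and it suffices to compute the right-hand side.

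Next I would run Bott's algorithm on $\mathrm{Fl}$ to compute $H^\bullet(\mathrm{Fl},\mathcal{L}(\mathbf m))$ for arbitrary $\mathbf m\in\bZ^N$, descending one $\bP^1$-bundle at a time. For $1\le i\le N-1$, let $p_i\colon\mathrm{Fl}\to\mathrm{Fl}_i$ be the $\bP^1$-bundle forgetting the $i$-th step of the flag; on each fibre $\mathcal{L}(\mathbf m)$ restricts to $\cO_{\bP^1}(m_i-m_{i+1})$. If $m_i-m_{i+1}=-1$ then $Rp_{i*}\mathcal{L}(\mathbf m)=0$ and all cohomology vanishes. If $m_i-m_{i+1}\ge 0$ then $Rp_{i*}\mathcal{L}(\mathbf m)$ is a bundle on $\mathrm{Fl}_i$ whose pullback is again of the form $\mathcal{L}(\mathbf m)$, with no degree shift. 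If $m_i-m_{i+1}\le -2$ then $Rp_{i*}\mathcal{L}(\mathbf m)$ is $\mathcal{L}(s_i\cdot\mathbf m)$ placed in cohomological degree $1$, where $s_i\cdot\mathbf m$ is the dotted action of the transposition $s_i$, i.e.\ it replaces the pair $(m_i,m_{i+1})$ by $(m_{i+1}-1,\,m_i+1)$. Iterating these reductions over adjacent transpositions and setting $\omega:=\mathbf m+(N-1,N-2,\dots,0)$, one of two things happens: either the ``degree $-1$'' case arises, which occurs precisely when $\omega$ has a repeated entry and forces all cohomology to vanish; or $\omega$ has no repeated entry, the process sorts $\omega$ into the strictly decreasing sequence $\sigma\circ\omega$ via a uniquely determined permutation $\sigma$ that accumulates $\ell(\sigma)$ degree shifts, and the algorithm terminates with $H^{\ell(\sigma)}(\mathrm{Fl},\mathcal{L}(\mathbf m))=\bS^\gamma(\bC^N)^\vee$ for $\gamma:=\sigma\circ\omega-(N-1,N-2,\dots,0)$, all other cohomology being zero.

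To conclude, I would specialize the previous paragraph to $\mathbf m=(\rho,\chi)$ and combine it with the reduction of the first paragraph: this gives exactly the assertion of the theorem, with $\omega=(\rho,\chi)+(N-1,\dots,0)$, $D=\ell(\sigma)$, and $\gamma$ as stated. The delicate point — the main obstacle — is the self-consistency of the algorithm in the second paragraph: one must check that the terminal cohomological degree and the resulting bundle $\bS^\gamma(\bC^N)^\vee$ are independent of the order in which the adjacent transpositions are applied, and that the degree shifts accumulate to exactly the length of the sorting permutation $\sigma$. This order-independence (equivalently, the consistency of Bott's chamber/reflection bookkeeping) is the classical heart of the theorem; in practice I would import it from the literature rather than re-derive it, so the argument above is only a sketch of the standard proof.
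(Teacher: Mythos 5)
The paper does not prove this statement at all — it is quoted verbatim from the classical literature (Bott; Weyman, Corollary 4.1.9) — and your sketch is precisely the standard argument behind that citation: reduce to a line bundle on the full flag variety via the relative flag bundle over $\Gr(k,N)$, then run Bott's/Demazure's $\bP^1$-bundle induction with the dotted reflection action, importing the order-independence of the bookkeeping from the literature, so your proposal is consistent with (indeed, a sketch of) exactly the proof the paper appeals to. The only caveat worth flagging is the imprecision in your $m_i-m_{i+1}\ge 0$ case — the pushforward there is a symmetric power of a rank-two bundle twisted by a line bundle, not again a line bundle of the form $\mathcal{L}(\mathbf m)$, so the descent must be organized as in the cited sources (relating cohomology on the flag variety itself via the exchange isomorphism, or pushing down only along the dominant directions) — but this is exactly the detail you explicitly defer to the literature.
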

\begin{rem}\label{rem:BWB_remark}
When we consider Schur functors of the quotient bundle independently, Theorem~\ref{thm:BWB} implies that for any partition $\lambda$, the morphism induced from the tautological sequence $$\bS^\lambda\cO_{\Gr(k,N)}^{\oplus N}\to \bS^\lambda \cB$$ gives an isomporhism on the space of global sections
$$
 H^0\left(\Gr(k, N),\ \bS^\lambda\cB\right)\cong \bS^\lambda(\bC^N).$$
Similar statement holds for $\bS^\lambda\cA^\vee$.
\end{rem}

\subsection{$t$-Index}
The notion of the $t$-index of a partition $\nu$ was introduced in \cite{Marian-Oprea-Sam} to systematically study the conditions which determine when all the cohomology groups of $\bS^\nu \cB^\vee$ vanish. Here we recall, and later generalize this notion which plays a crucial role in the sections that follow.
\begin{defn}
Fix a positive integer $t$. We say that a non-increasing sequence of integers $\chi=(\chi_1,\chi_2,\dots,\chi_m)$ has a \textbf{well-defined $t$-index} if there exists a nonnegative integer $j$ such that
\[
\chi_j \ge j + t \quad \text{and} \quad \chi_{j+1} \le j.
\]
In this case, the $t$-index of $\chi$ is $j$. Note that the $t$-index of $\chi$ is $0$ (resp.\ $m$) if $\chi_1\le 0$ (resp.\ $\chi_{m}\ge m+t$).
\end{defn}

\begin{rem}
 Note that every integer partition $\lambda$ has a well-defined $0$-index, which equals the rank of the partition. If $\lambda$ has a well-defined $t$-index $j$, then $j$ must also be the $0$-index (i.e., the rank) of $\lambda$. Furthermore, if $j$ is the $0$-index of $\lambda$, then $\lambda$ has a well-defined $t$-index if and only if $t \le \lambda_j - j$.
\end{rem}
\begin{exm}
  Fix $t = 3$. Consider the integer partitions $\lambda = (6,5,2,1)$ and $\mu = (7,4,2,2)$. The Durfee square of both partitions $\lambda$ and $\mu$ has side length $2$. We observe that $\lambda$ has a well-defined $t$-index, whereas $\mu$ does not.
    \[
      \ytableausetup{boxsize=0.8em}
    \begin{tikzpicture}
      \draw (0, 0)  node[anchor=north west] {%
        \ydiagram{6,5,2,1,}
            *[*(lightgray!70)]{2,2}
      };
     \draw (-0.5, -0.8) node {$\lambda=$};
    \end{tikzpicture}
    \quad \quad
    \ytableausetup{boxsize=0.8em}
    \begin{tikzpicture}
      \draw (0, 0)  node[anchor=north west] {%
        \ydiagram{7,4,2,2}
                    *[*(lightgray!70)]{2,2}
      };
             \draw (-0.5, -0.8) node {$\mu=$};
    \end{tikzpicture}
    \]
\end{exm}

\begin{lem}\label{lem:BWB_index}
    Let $\chi=(\chi_1,\dots,\chi_{N-k})$ be a non-decreasing sequence of integers. The vector bundle $\bS^{\chi}(\cB^\vee)$ on $\Gr(k,N)$ has a nontrivial cohomology group
    \begin{equation}\label{eq:Lemma_chi_BWB}
        H^D\!\big(\Gr(k,N), \bS^{\chi}(\cB^\vee)\big)\neq 0
    \end{equation}
    if and only if $\chi$ has a well-defined $k$-index $j$, and the cohomological degree is $D=kj$.
\end{lem}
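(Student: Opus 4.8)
The plan is to deduce everything from the Borel--Weil--Bott theorem (Theorem~\ref{thm:BWB}) applied to $\bS^{\chi}(\cB^\vee)=\bS^{\rho}\cA^\vee\otimes\bS^{\chi}\cB^\vee$ with $\rho=(0,\dots,0)$ of length $k$. The associated weight vector is
\[
\omega=(\rho,\chi)+(N-1,N-2,\dots,0)=\bigl(N-1,\,N-2,\,\dots,\,N-k,\ a_1,\,a_2,\,\dots,\,a_{N-k}\bigr),
\]
where $a_i:=\chi_i+(N-k-i)$. Since $\chi$ is non-increasing, both the ``first block'' $(N-1,\dots,N-k)$ and the ``second block'' $(a_1,\dots,a_{N-k})$ are strictly decreasing, so a repetition in $\omega$ can only arise between the two blocks; concretely, $\omega$ is repetition-free if and only if no $a_i$ lies in the gap $G:=\{N-k,N-k+1,\dots,N-1\}$, which is a run of $k$ consecutive integers. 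By Theorem~\ref{thm:BWB}, this is exactly the condition for $\bS^{\chi}(\cB^\vee)$ to have a nonvanishing cohomology group.

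The next step is to recognize this avoidance condition as the statement that $\chi$ has a well-defined $k$-index. On the one hand, if $\chi$ has $k$-index $j$, then $\chi_i\ge\chi_j\ge j+k$ for $i\le j$ forces $a_i\ge N$, while $\chi_i\le\chi_{j+1}\le j$ for $i\ge j+1$ forces $a_i\le N-k-1$; in either case $a_i\notin G$. On the other hand, since $(a_i)$ is strictly decreasing and $G$ is a block of $k$ consecutive integers, a sequence avoiding $G$ must ``jump over'' it at a single spot: there is a unique $j\in\{0,1,\dots,N-k\}$ with $a_j\ge N$ and $a_{j+1}\le N-k-1$ (interpreting the boundary cases $j=0$ and $j=N-k$ as in the definition of the $t$-index), and unwinding these two inequalities gives precisely $\chi_j\ge j+k$ and $\chi_{j+1}\le j$. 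Hence the two conditions coincide, with the same value of $j$.

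Finally, to pin down the cohomological degree, recall that Theorem~\ref{thm:BWB} gives $D=\ell(\sigma)$, the number of inversions of $\omega$ relative to the strictly decreasing order. No inversions occur within either block, so $D$ counts the pairs $(p,i)$ with $1\le p\le k$, $1\le i\le N-k$, and $N-p<a_i$. From the estimates above, when $i\le j$ we have $a_i\ge N>N-p$ for all such $p$, contributing $k$ inversions each; when $i\ge j+1$ we have $a_i\le N-k-1<N-p$ for all such $p$, contributing none. Therefore $D=kj$, as claimed.

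Each step is elementary, so I do not expect a serious obstacle; the only points demanding attention are the index-shift arithmetic relating $a_i$, $\chi_i$, $j$, $k$, and $N$, and the degenerate boundary cases $j=0$ (where the condition reads $\chi_1\le 0$, giving $H^0\ne 0$) and $j=N-k$ (where it reads $\chi_{N-k}\ge N$, giving top cohomology), which match the boundary conventions recorded after the definition of the $t$-index.
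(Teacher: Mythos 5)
Your argument is correct and follows essentially the same route as the paper: both apply Theorem~\ref{thm:BWB} with the trivial weight on $\cA^\vee$, observe that the shifted vector $\omega$ consists of two strictly decreasing blocks so that nonvanishing is equivalent to the entries $a_i=\chi_i+N-k-i$ avoiding the run of $k$ consecutive integers $N-k,\dots,N-1$, and identify the split point $j$ with the $k$-index, giving $D=kj$ by counting inversions between the blocks. Your write-up merely makes the inversion count and the boundary cases $j=0$, $j=N-k$ slightly more explicit than the paper does (and, like the paper's proof, correctly reads the hypothesis on $\chi$ as non-increasing despite the ``non-decreasing'' typo in the statement).
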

\begin{proof}
    It is a straightforward application of Theorem~\ref{thm:BWB}. Indeed, \eqref{eq:Lemma_chi_BWB} is satisfied if and only if 
\begin{align*}
  \omega
  &= (0,\dots,0,\chi_1,\dots,\chi_{N-k}) + (N-1,\dots,N-k,\,N-k-1,\dots,0)\\
  &= (f_k,\dots,f_1,\,g_1,\dots,g_{N-k})
\end{align*}
has no repetitions, and $D$ is the length of the permutation that sorts $\omega$. Observe that $f_k>\cdots>f_1$ are consecutive integers, and thus we may choose $j$ such that
\[
g_1>\cdots>g_j>f_k>\cdots>f_1>g_{j+1}>\cdots>g_{N-k}.
\]
Hence the cohomological degree is $D=kj$. The required conditions $\chi_j \ge j+k$ and $\chi_{j+1}\le j$ follow from the inequalities $g_j>f_k$ and $g_{j+1}<f_1$, respectively. 
\end{proof}

\subsection{$(t; \eta)$-Index}
To pictorially describe the conditions for the vanishing of cohomology groups of the vector bundle $\mathbb{S}^\mu(\cA) \otimes \mathbb{S}^\eta(\cB)$, we introduce the following generalization of the notion of index.

\begin{defn}
Fix a positive integer $t$ and a tuple of non-increasing integers $\eta=(\eta_1,\dots,\eta_{r})$. Write $\eta=(\gamma,-\delta)$, where $\gamma$ and $\delta$ are partitions describing the nonnegative and negative entries of $\eta$.
We say that a partition $\mu$ has a \textbf{well-defined $(t; \eta)$-index} if there exists a non-negative integer $i$ such that
\begin{equation*}
    \mu_{i + 1 - s} \ge i + t - \gamma_s^\dagger \quad \text{and} \quad \mu_{i + s} \le i + \delta_s^{\dagger} \quad \text{for all } s \ge 1.
\end{equation*}
We call such an integer $i$ the $(t; \eta)$-index of $\mu$. 
\end{defn}
\begin{exm}
Fix $t = 3$ and $\eta = (1,-1,-1)$. Consider the partition $\nu = (6,4,3,1)$, and observe that $\nu$ also has a well-defined $(t;\eta)$-index, equal to $2$. We have $\delta = (1,1)$ and $\gamma = (1)$ in blue and red, respectively. 
    \end{exm}
\[
      \ytableausetup{boxsize=0.8em}
    \begin{tikzpicture}
      \draw (0, 0)  node[anchor=north west] (YD) {%
        \ydiagram{6,4,3,1}
            *[*(lightgray!70)]{2,2}
      };
      \fill[red!80,opacity=0.5] 
    ([xshift=3.75em,yshift=-1.2em]YD.north west)
    rectangle ++(0.8em,-0.9em);
     \fill[blue!80,opacity=0.5] 
    ([xshift=2.1em,yshift=-2em]YD.north west)
    rectangle ++(1.7em,-0.9em);
             \draw (-0.5, -0.8) node {$\nu=$};
       \draw (1.45, -1.4) node {\tiny $\delta_2$};
        \draw (1.1, -1.4) node {\tiny $\delta_1$};
        \draw (1.8, -1) node {\tiny $\gamma_1$};
    \end{tikzpicture}
    \]
    However, $\nu$ itself does not have a well-defined $t$-index. This example also illustrates that the $(t; \eta)$-index of a partition is not necessarily equal to the rank of the partition.

\begin{lem}\label{lem:BWB_index_multi}
Let $\mu = (\mu_1, \dots, \mu_k)$ be a partition and $\eta=(\eta_1,\eta_2,\dots,\eta_{N-k})$ be tuple of non-increasing integers. Write $\eta = (\gamma,-\delta)$ for partitions $\gamma$ and $\delta$. Suppose that the vector bundle $\bS^\mu(\cA) \otimes \bS^\eta(\cB)$ on $\Gr(k,N)$ has a (unique) nonzero cohomology group, namely $$H^D(\Gr(k,N), \bS^\mu(\cA) \otimes \bS^\eta(\cB))\ne 0.$$ Then the following hold.
\begin{itemize}[leftmargin=2em]
    \item[(i)] The partition $\mu$ has a well-defined $(N - k; \eta)$-index, i.e., there exists a natural number $i$ such that
\begin{equation}\label{eq:(t,eta)-index}
    \mu_{i + 1 - s} \ge i + N - k - \gamma_s^\dagger \quad \text{and} \quad \mu_{i + s} \le i + \delta_s^\dagger \quad \text{for all } s \ge 1.
\end{equation}
\item[(ii)] The cohomological degree satisfies the bound
\[
D \le |\delta| + (\mu_1+\mu_2+\cdots+\mu_i) -i^2.
\]
\end{itemize}
\end{lem}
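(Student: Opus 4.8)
The strategy is to mimic the proof of Lemma \ref{lem:BWB_index} but now tracking the interaction between the two Schur functors. First I would rewrite $\bS^\mu(\cA)\otimes\bS^\eta(\cB)$ in a form to which Theorem \ref{thm:BWB} applies directly: using $\bS^\mu(\cA) = \bS^{-\mu^{\text{rev}}}(\cA^\vee)$ where $-\mu^{\text{rev}} = (-\mu_k,\dots,-\mu_1)$, and similarly $\bS^\eta(\cB) = \bS^{-\eta^{\text{rev}}}(\cB^\vee)$ with $-\eta^{\text{rev}} = (\delta_1,\dots,\delta_{|\delta|},\dots)$ reindexed appropriately — that is, $-\eta^{\text{rev}} = (\delta,-\gamma^{\text{rev}})$ as a non-increasing sequence of length $N-k$. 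Then the relevant sequence governing the cohomology is
\[
\omega = (-\mu_k,\dots,-\mu_1,\;\delta_1,\dots,-\gamma_1)+(N-1,N-2,\dots,0).
\]
By Theorem \ref{thm:BWB}, nonvanishing of some $H^D$ is equivalent to $\omega$ having no repeated entries, and then $D = \ell(\sigma)$ where $\sigma$ sorts $\omega$ into strictly decreasing order.

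The key step is to extract the combinatorial inequalities. Write the first block $f = (-\mu_k+k-1,\dots,-\mu_1+k+\cdots)$ coming from $\cA$ and the second block $g$ coming from $\cB$; concretely $f_{k+1-a} = -\mu_a + (N-a)$ for $1\le a\le k$ and $g_c = \eta_c + (N-k-c)$ for $1\le c\le N-k$. The permutation $\sigma$ is determined by how the $g_c$ interleave among the $f_{k+1-a}$. I would let $i$ be the number of $g$-entries that land above the entire $f$-block, i.e. the largest index such that $g_i > \max_a f_{k+1-a} = f_k = -\mu_k + N - k + \ldots$; more precisely the condition is that $g_s$ sits above $f_{k+1-s}$ (the $(s)$-th largest $f$-entry from the appropriate end) for $s\le i$ and below for $s > i$, plus the analogous condition on the small end where $\gamma$ contributes. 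Unwinding $g_s > f_{k+1-s}$ using the explicit formulas and the conjugate identities $\gamma_s^\dagger$, $\delta_s^\dagger$ (which convert "column count" statements about $\gamma,\delta$ into "row count" statements comparable with $\mu$) should produce exactly the inequalities $\mu_{i+1-s}\ge i + (N-k) - \gamma_s^\dagger$ and $\mu_{i+s}\le i+\delta_s^\dagger$ of \eqref{eq:(t,eta)-index}, establishing part (i) with $t = N-k$.

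For part (ii), the cohomological degree $D = \ell(\sigma)$ counts inversions: each $g$-entry that must move past $f$-entries, and $f$-entries moving past each other, plus inversions internal to the $g$-block caused by the negative $\gamma$ part. I would bound this by: (a) the contribution from the $\delta$-part, which is at most $|\delta|$ (each unit of $\delta$ can be charged to at most one inversion in the relevant sorting, exactly as the $|\delta|$ term in a Bott-type computation for $\cB^\vee$ alone); (b) the contribution from the first $i$ rows of $\mu$ interleaving with the $f$-block and its effect, giving $\sum_{a=1}^i \mu_a - i^2$ after the usual cancellation (the $\binom{i}{2}$-type terms among the $f_{k+1-s}$, $s\le i$, and the $\binom{i}{2}$ from their new positions combine with the linear $g$-shifts to leave $-i^2$). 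Rather than compute $\ell(\sigma)$ exactly I would only prove the inequality, which is cleaner: overestimate by ignoring favorable cancellations from the tail of $\mu$ and from $\gamma$. The main obstacle is bookkeeping — getting the conjugate-partition substitutions and the $-i^2$ constant exactly right while keeping the argument an inequality rather than an identity; I expect to need a short lemma isolating "how a single positive/negative weight shifts the inversion count," applied blockwise. A clean way to organize this is to treat the $f$-block insertion and the $g$-block-internal sorting as two successive permutations and bound $\ell$ of the composite by the sum, handling each summand with the explicit formulas above.
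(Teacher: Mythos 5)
Your plan follows the same route as the paper: dualize so that Theorem~\ref{thm:BWB} applies, form the shifted weight vector $\omega$, and extract both the index conditions and the degree bound from the permutation sorting $\omega$, treating the $\cA$-block and the $\cB$-block separately. But as written it has a genuine gap precisely where the real work lies. The inequalities of part (i) are not obtained by ``unwinding'' individual comparisons such as $g_s>f_{k+1-s}$: the quantities $\gamma_s^\dagger$ and $\delta_s^\dagger$ are counting statements (how many parts of $\gamma$, resp.\ $\delta$, are at least $s$), and they enter only through the global no-repetition condition on the union of two staircase-shifted sequences. The paper isolates exactly this step in a separate combinatorial lemma (Lemma~\ref{lem:Beta_set}, proved by an abacus/lattice-path argument), applied once to the entries of $\omega$ below a threshold and once to those above it; that lemma simultaneously produces the conjugate-partition inequalities in \eqref{eq:(t,eta)-index} and the inversion bounds ($\le|\alpha|$ for the low block and $\le|\delta|$ for the high block) that give part (ii). Your proposal acknowledges needing ``a short lemma'' only for the inversion count, and for part (i) defers to bookkeeping; without an argument of the Beta-set type, pairwise comparisons alone do not yield $\mu_{i+1-s}\ge i+(N-k)-\gamma_s^\dagger$ or $\mu_{i+s}\le i+\delta_s^\dagger$.

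A second, related problem is your definition of the index $i$. You take $i$ to be determined by a clean interleaving threshold (the $g$-entries with $g_s$ above $f_{k+1-s}$ for $s\le i$ and below for $s>i$), but nothing guarantees that the interleaving pattern has this matched-threshold form, so it is not clear your $i$ is well defined, nor that it satisfies \eqref{eq:(t,eta)-index}. The paper instead defines $i$ intrinsically from $\mu$ by $\mu_i\ge i+p$ and $\mu_{i+1}<i+p$, where $p$ is the number of negative entries of $\eta$ (the length of $\delta$), and only afterwards deduces the structure of the sorting permutation; this choice is also what makes the exact constant $-i^2$ in part (ii) come out of the identity $ip+|\alpha|=(\mu_1+\cdots+\mu_i)-i^2$ with $\alpha=(\mu_1-p-i,\dots,\mu_i-p-i)$. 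So the plan is in the right direction, but to make it a proof you need (a) an intrinsic definition of $i$ of the paper's form, and (b) a lemma equivalent to Lemma~\ref{lem:Beta_set} converting the no-repetition condition into the $\gamma^\dagger,\delta^\dagger$ inequalities and the inversion bounds.
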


\begin{proof}
Let $\eta=(\gamma_1,\dots,\gamma_q,-\delta_p,\dots,-\delta_1)$ be a tuple of non-increasing integers, where $\gamma$ and $\delta$ are partitions with at most $q$ and $p$ parts respectively, and $p+q=N-k$.
We will show that $\mu$ has a well-defined $(N-k;\eta)$-index $i$ given by the condition
\begin{equation}\label{eq:index_lem}
  \mu_i \ge i+p \quad \text{and} \quad \mu_{i+1} < i+p .
\end{equation}
Note that $\bS^{\mu}\cA \cong \bS^{-\mu}\cA^{\vee}$ and $\bS^\eta\cB \cong \bS^{-\eta}(\cB^\vee)$, where $-\eta=(\delta,-\gamma)$. Theorem~\ref{thm:BWB} implies that the vector
\[
  \omega
  = (-\mu_{k}, \ldots, -\mu_1, \delta_1, \ldots, \delta_p, -\gamma_q, \ldots, -\gamma_1) + (N-1, \ldots, 1, 0)
\]
has no repetition, and that $D$ is the length of the permutation that sorts $\omega$. We split the analysis into two parts, first looking at the entries of $\omega$ that are strictly less than $q$, and then at those that are at least $q$.

\begin{equation*}
\ytableausetup{boxsize=0.7em}
\begin{tikzpicture}
      \draw (0, 0) node[anchor=north west] (YD) {%
        \ydiagram{20,19,18,17,15, 6, 4, 4,3, 3,2,1}
        *[*(lightgray!70)]{5,5,5,5,5}
      };
\fill[red!50,opacity=0.3] 
    ([xshift=12.2em,yshift=-1.9em]YD.north west)
    rectangle ++(0.75em,-0.75em);
\fill[red!50,opacity=0.3] 
    ([xshift=10.7em,yshift=-2.6em]YD.north west)
    rectangle ++(2.3em,-1.5em);

            \draw[red, very thick]
  ([xshift=12.2em,yshift=-1.85em]YD.north west) -- 
  ++(0.7em,0);
 \draw[red, very thick]
  ([xshift=12.2em,yshift=-1.85em]YD.north west) -- 
  ++(0,-0.7em);
  \draw[red, very thick]
  ([xshift=10.7em,yshift=-2.55em]YD.north west) -- 
  ++(1.5em,0);
  \draw[red, very thick]
  ([xshift=10.7em,yshift=-2.55em]YD.north west) -- 
  ++(0,-1.5em);
   \draw[red, very thick]
  ([xshift=8.5em,yshift=-4.1em]YD.north west) -- 
  ++(2.2em,0);

\draw[black, dotted]([xshift=12.95em,yshift=-1.8em]YD.north west) -- 
  ++(0,-2.3em);
  \draw[black, dotted]([xshift=12.25em,yshift=-2.5em]YD.north west) -- 
  ++(0,-1.6em);

      \fill[blue!50,opacity=0.3] 
    ([xshift=4.1em,yshift=-4.1em]YD.north west)
    rectangle ++(2.8em,-0.7em);
          \fill[blue!50,opacity=0.3] 
    ([xshift=4.1em,yshift=-4.8em]YD.north west)
    rectangle ++(1.4em,-0.7em);
    \draw[blue, very thick]
  ([xshift=7em,yshift=-4.1em]YD.north west) -- 
  ++(1.5em,0em);
            \draw[blue, very thick]
  ([xshift=7em,yshift=-4.1em]YD.north west) -- 
  ++(0,-0.7em);
            \draw[blue, very thick]
  ([xshift=5.5em,yshift=-4.8em]YD.north west) -- 
  ++(1.5em,0em);
  
              \draw[blue, very thick]
  ([xshift=5.5em,yshift=-4.8em]YD.north west) -- 
  ++(0,-0.7em);
          \draw[blue, very thick]
  ([xshift=4.1em,yshift=-5.5em]YD.north west) -- 
  ++(1.4em,0em);
\draw[black,dotted]([xshift=5.55em,yshift=-4.1em]YD.north west) -- 
  ++(0em,-0.7em);
  \draw[black,dotted]([xshift=6.3em,yshift=-4.1em]YD.north west) -- 
  ++(0em,-0.7em);
\draw[black,dotted]([xshift=4.8em,yshift=-4.1em]YD.north west) -- 
  ++(0em,-1.5em);
  \draw[black,dotted]([xshift=4.1em,yshift=-4.1em]YD.north west) -- 
  ++(0em,-1.5em);

    \draw (-1, -2) node {$\mu=$};
            \draw (4.95, -1.75) node {\tiny $\textcolor{red}{\gamma_1}$};
         \draw (4.65, -1.75) node {\tiny $\textcolor{red}{\gamma_2}$};
         \draw (4.05, -1.75) node {\tiny $\textcolor{red}\cdots$};
           \draw (3.45, -1.75) node {\tiny $\textcolor{red}{\gamma_q}$};

         \draw (3.2, -1.75) node {\tiny $\textcolor{blue}{\delta_p}$};
         \draw (2.35, -2) node {\tiny $\textcolor{blue}{\cdots}$};
         \draw (2.02, -2.3) node {\tiny $\textcolor{blue}{\delta_2}$};
         \draw (1.72, -2.3) node {\tiny $\textcolor{blue}{\delta_1}$};

    \draw[decorate,decoration={brace,amplitude=3pt},thick]
    ([xshift=0.5em,yshift=0em]YD.north west) -- ++(7.5em,0)
    node[midway,above=4pt] {$i+p$};
    
    \draw[brown, very thick]([xshift=8.5em,yshift=-4.1em]YD.north west) -- 
  ++(0em,3.8em);
\draw[decorate,decoration={brace,mirror,amplitude=3pt},thick]
    ([xshift=-0em,yshift=-0.5em]YD.north west) -- ++(0,-3.7em)
    node[midway,left=4pt] {$i$};
    \end{tikzpicture}
\end{equation*}

By the definition of $i$ in \eqref{eq:index_lem}, the entries strictly less than $q$ in $\omega$ consist of
\[
  p+q+(i-1)-\mu_i,\ \ldots,\ p+q-\mu_1 \quad \text{and} \quad q-1-\gamma_q,\ \ldots,\ -\gamma_1.
\]
Subtracting $q-1$ from each of the entries above and negating,  we may assume that there are no repetitions among the following $i+q$ nonnegative integers
\[
  \alpha_i,\alpha_{i-1}+1,\ \ldots,\ \alpha_1+i-1 \quad \text{and} \quad \gamma_q,\gamma_{q-1}+1,\ \ldots,\ \gamma_1+q-1,
\]
where $\alpha=(\mu_1-p-i,\ldots,\mu_i-p-i)$. Lemma~\ref{lem:Beta_set} implies that $\alpha_{i-s}\ge q-\gamma_s^\dagger$ for all $s\ge 0$, which proves the first set of inequalities in \eqref{eq:(t,eta)-index}. Moreover, the length of the permutation that sorts the entries above (in the stated order) is at most $|\alpha|$, hence the contribution to the length of the permutation that sorts $\omega$ is at most $ip+|\alpha|$ (extra $ip$ for moving the $i$ entries containing $\mu_1,\dots,\mu_i$ to the right of entries containing $\delta_1,\dots,\delta_p$ in $\omega$).

Now we consider the entries of $\omega$ that are at least $q$, namely
\[
  \delta_p,\delta_{p-1}+1,\ \ldots,\ \delta_1+p-1 \quad \text{and} \quad \beta_{k-1},\beta_{k-2}+1,\ \ldots,\ \beta_1 + (k-i)-1,
\]
where $\beta=(i+p-\mu_{k},\ldots,i+p-\mu_{i+1})$. Lemma~\ref{lem:Beta_set} implies $\beta_{i-s} \ge p-\delta_s$, proving the second set of inequalities in \eqref{eq:(t,eta)-index}. Furthermore, the length of the permutation that sorts the entries above contributes at most $|\delta|$ to the length of the permutation that sorts $\omega$. Hence $D \le |\delta| + ip + |\alpha| = |\delta|+(\mu_1+\cdots+\mu_i)-i^2$.
\end{proof}

The following result is elementary; due to the lack of a reference, we provide a short proof for completeness. We use the abacus notation for partitions (also called beta-sets in the literature cf. \cite{kerberjames}). We adapt the proof from \cite[(1.7)]{Macdonald} to our setting.
\begin{lem}\label{lem:Beta_set}
    Let $\alpha=(\alpha_1,\alpha_2,\dots,\alpha_i)$ and $\lambda=(\lambda_1,\lambda_2,\dots,\lambda_q)$ be partitions with at most $i$ and $q$ parts respectively. If the string of natural numbers
    \[
    \omega=(\alpha_i,\alpha_{i-1}+1,\dots,\alpha_1+i-1,\lambda_q,\lambda_{q-1}+1,\dots,\lambda_1+q-1)
    \] 
    has no repitition, then the permutation $\sigma$ that sorts $\omega$ has length $\ell(\sigma)\le |\alpha|$, and $$\alpha_{i-s}\ge q-\lambda_s^\dagger\quad \text{for all } 0\le s<i. $$
\end{lem}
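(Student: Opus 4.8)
The plan is to work entirely in the abacus/beta-set picture. Recall that for a partition $\alpha=(\alpha_1,\dots,\alpha_i)$ with at most $i$ parts, the numbers $\{\alpha_{i-t}+t : 0\le t<i\}$ (equivalently $\{\alpha_j+i-j : 1\le j\le i\}$) form its $i$-element beta-set $B_i(\alpha)$, and similarly $\{\lambda_{q-t}+t:0\le t<q\}$ is the $q$-element beta-set $B_q(\lambda)$. The hypothesis that $\omega=(\alpha_i,\alpha_{i-1}+1,\dots,\alpha_1+i-1,\lambda_q,\lambda_{q-1}+1,\dots,\lambda_1+q-1)$ has no repetition is exactly the statement that $B_i(\alpha)$ and $B_q(\lambda)$ are disjoint subsets of $\mathbb{Z}_{\ge 0}$. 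Note also that inside each block the entries are already strictly increasing, so the sorting permutation $\sigma$ only has to interleave the two blocks; in particular $\ell(\sigma)$ counts the number of pairs $(a,b)$ with $a\in B_i(\alpha)$, $b\in B_q(\lambda)$, and $a>b$ (an entry of the first block must jump past an entry of the second block exactly when it is larger).

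For the length bound, I would argue as follows. For each $j$ with $1\le j\le i$, the beta-number $a_j:=\alpha_j+i-j$ must jump past every element of $B_q(\lambda)$ that is smaller than it; since $B_q(\lambda)\subseteq\mathbb{Z}_{\ge 0}$ and the elements of $B_q(\lambda)$ are distinct, the number of such elements is at most $a_j$ minus the number of elements of $B_i(\alpha)$ below $a_j$, and the latter count is exactly $i-j$ (the beta-numbers $a_{j+1}<\dots<a_i$ are all smaller). Hence $a_j$ contributes at most $a_j-(i-j)=\alpha_j$ to $\ell(\sigma)$, and summing over $j$ gives $\ell(\sigma)\le\sum_j\alpha_j=|\alpha|$. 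This is precisely the argument of \cite[(1.7)]{Macdonald} transcribed to the present normalization, and I expect it to be short and routine once the beta-set dictionary is set up.

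For the inequality $\alpha_{i-s}\ge q-\lambda_s^\dagger$ for $0\le s<i$: unwinding definitions, $\alpha_{i-s}$ is small exactly when there are many elements of $B_i(\alpha)$ crowded into a short initial segment of $\mathbb{Z}_{\ge 0}$, but disjointness forces $B_q(\lambda)$ to avoid those slots. Concretely, I would fix $s$ and count: among the integers $0,1,\dots,\alpha_{i-s}+s-1$ (there are $\alpha_{i-s}+s$ of them), the beta-set $B_i(\alpha)$ occupies at least $s+1$ of them — namely the beta-numbers coming from rows $i, i-1,\dots,i-s$, since for $0\le t\le s$ we have $\alpha_{i-t}+t\le \alpha_{i-s}+t\le \alpha_{i-s}+s-1$ when $t\le s-1$, with the boundary case $t=s$ handled by the non-strict inequality plus disjointness (or one simply uses $\le \alpha_{i-s}+s$ and the pigeonhole on a slightly larger window). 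Therefore $B_q(\lambda)$ can occupy at most $(\alpha_{i-s}+s)-(s+1)+\text{(adjustment)}$ of these slots; on the other hand, the number of elements of $B_q(\lambda)$ lying in $\{0,1,\dots,M-1\}$ equals $q-\lambda_{M-q+1}^\dagger$-type quantities — more cleanly, the number of parts of $\lambda$ that are $\ge$ a given value is a value of $\lambda^\dagger$. Matching these two counts yields the claimed bound $\alpha_{i-s}\ge q-\lambda_s^\dagger$ after reindexing.

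The main obstacle, and the only place needing genuine care, is bookkeeping the off-by-one shifts: the present normalization writes the beta-blocks starting from $\alpha_i$ (not $\alpha_i+0$ shifted by a global constant), and $\lambda^\dagger_s$ counts parts of $\lambda$ that are $\ge s$, so I must make sure the window sizes and the indices $s$ versus $s+1$ versus $s-1$ line up correctly; the cleanest route is to phrase everything as ``$|B_q(\lambda)\cap[0,N)| = $ (number of $j$ with $\lambda_{q-j+1}+j-1<N$)'' and then plug in $N=\alpha_{i-s}+s$ and use disjointness $|B_i(\alpha)\cap[0,N)|+|B_q(\lambda)\cap[0,N)|\le N$. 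I would double-check the edge cases $s=0$ and $\lambda=\emptyset$ (where $\lambda^\dagger_s=0$ and the inequality reads $\alpha_i\ge q$, i.e. $a_i=\alpha_i\ge q$, which indeed follows from disjointness since $0,1,\dots,q-1$ are forced to lie outside $B_i(\alpha)$ once they are... wait, that is not automatic — rather it follows because $B_q(\emptyset)=\{0,1,\dots,q-1\}$ and disjointness then forces $\alpha_i=a_i\ge q$). Once these index conventions are pinned down, both assertions fall out of the single inequality $|B_i(\alpha)\cap[0,N)|+|B_q(\lambda)\cap[0,N)|\le N$ applied at the right cutoffs $N$.
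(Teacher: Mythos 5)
Your framework is exactly the paper's: the same abacus/beta-set picture (the paper adapts \cite[(1.7)]{Macdonald}, as you propose), and your proof of $\ell(\sigma)\le|\alpha|$ coincides with the paper's inversion count — the elements of $B_q(\lambda)$ below $a_j=\alpha_j+i-j$ lie in $[0,a_j)\setminus B_i(\alpha)$, a set of size exactly $a_j-(i-j)=\alpha_j$. That half is complete and correct.

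The second inequality, however, is not yet proved as written: the step that actually produces $\lambda_s^\dagger$ is missing. Two concrete issues. First, the window claim is off by one: $B_i(\alpha)$ need not meet $\{0,\dots,\alpha_{i-s}+s-1\}$ in $s+1$ elements (take $\alpha=(1,1)$, $i=2$, $s=1$: $B_2(\alpha)=\{1,2\}$ meets $\{0,1\}$ in one element); this is harmless, since either choice of window still yields $T:=\lvert B_q(\lambda)\cap[0,\alpha_{i-s}+s)\rvert\le\alpha_{i-s}$ from disjointness, as you anticipate. Second, and more seriously, there is no identity of the form $\lvert B_q(\lambda)\cap[0,M)\rvert=q-\lambda^\dagger_{M-q+1}$ (try $\lambda=(2,1)$, $q=2$, $M=2$), and the lower bound you would need, $T\ge q-\lambda^\dagger_s$, is not a fact about $\lambda$ and the cutoff alone: if you try to verify it by checking that the beta-numbers $\lambda_{q-t}+t$ of the parts $<s$ lie in the window, you end up needing $\alpha_{i-s}\ge q-\lambda^\dagger_s-1$, i.e.\ essentially the conclusion, so the proposed ``matching of counts'' is circular as stated. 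The repair is short but is a genuine extra observation: if $T<q$, then the $(T+1)$-st smallest element of $B_q(\lambda)$ satisfies $\lambda_{q-T}+T\ge\alpha_{i-s}+s$, and since $T\le\alpha_{i-s}$ this forces $\lambda_{q-T}\ge s$, hence $\lambda^\dagger_s\ge q-T$ and $\alpha_{i-s}\ge T\ge q-\lambda^\dagger_s$ (the case $T=q$ is trivial). The paper closes this same step differently, via lattice-path dominance — equivalently, the description of $\mathbb{Z}_{\ge0}\setminus B_q(\lambda)$ through the conjugate partition, whose $(s+1)$-st element is $q+s-\lambda^\dagger_{s+1}$ — which in fact gives the slightly stronger bound $\alpha_{i-s}\ge q-\lambda^\dagger_{s+1}$.
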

\begin{proof}
 We define a sequence $u=(u_0,u_1,u_2,\dots)$ of $0$'s and $1$'s, by
\[
u_{m}=\begin{cases}
    1&\text{if } m\in \{\alpha_i,\dots,\alpha_1+i-1\}\\
    0&\text{otherwise}.
\end{cases}
\]
Similarly define $v=(v_0,v_1,v_2,\dots)\in \{0,1\}^{\mathbb{N}}$ with $1$'s placed exactly at the positions $(\lambda_q,\dots,\lambda_1+q-1)$. Note that $\omega$ has no repetition if and only if $u+v\in \{0,1\}^\mathbb{N}$. The length of permutation, by definition, equals
\begin{align*}
\ell(\sigma) &= \#\{ (m,n )\in \mathbb{N}^2: n<m,\ v_{n}=1, \ u_{m}=1\}
\\
    &\le \#\{ (m,n )\in\mathbb{N}^2:  n<m,\ u_{n}=0 ,\ u_{m}=1\} = |\alpha|.
\end{align*}
Let $\hat{v}$ be the sequence obtained by flipping 1's and 0's in $v$. To observe the inequalities, we will draw the lattice path diagrams for $u$ and $\hat{v}$, such that $0$'s denote increment in the $y$-coordinate by one, and $1$'s  denote right step by increasing $x$-coordinate by one. The condition $u+v\in \{0,1\}^{\mathbb{N}}$ implies that for any $m\ge 0$, 
\[
\text{number of 0's in }(u_0,\dots,u_m)\, \ge\, \text{number of 0's in }(\hat{v}_0,\dots,\hat{v}_m),
\]
and hence the lattice path of $\hat{v}$ lies always below that of $u$.
\[
\begin{tikzpicture}[scale=0.5,>=stealth]
  \draw[step=1cm,lightgray,very thin] (0,0) grid (9,7);
  \foreach \x in {0,...,3} \node[below] at (\x,0) {\tiny $\x$};
\foreach \x in {4,...,4} \node[below] at (\x,0) {\tiny $\cdots$};
\foreach \x in {5,...,5} \node[below] at (\x,0) {\small $s$};
\foreach \x in {7,...,7} \node[below] at (\x,0) {\tiny $\cdots\hspace{1em}$};
\foreach \x in {8,...,8} \node[below] at (\x,0) {\tiny $i-1$};
\foreach \x in {9,...,9} \node[below] at (\x,0) {\tiny $i$};
  
  \foreach \y in {0,...,3} \node[left]  at (0,\y) 
  {\tiny $\y$};
   \foreach \y in {4,...,5} \node[left]  at (0,\y) 
  {\tiny $\vdots$};
   \foreach \y in {6,...,6} \node[left]  at (0,\y) 
  {\tiny $q-1$};
  \foreach \y in {7,...,7} \node[left]  at (0,\y) 
  {\tiny $q$};


  \draw[red,very thick,-stealth]
    (0,0) -- (1,0) -- (1,1) -- (2,1) -- (3,1) -- (3,2) -- (4,2) -- (5,2)--(5,3)--(7,3)--(7,4)--(7,5)--(9,5)--(9,7)--(11,7);

  \draw[blue,very thick,-stealth]
    (0,0) -- (0,1) -- (1,1) -- (1,2) -- (2,2) -- (4,2) -- (4,4) -- (4,5) -- (5,5) -- (7,5) -- (7,6)--(8,6)--(8,7)--(9,7)--(9,8);

\draw[black,very thick] (5,0) rectangle (6,7);
\fill[pattern={Lines[angle=45,distance=4pt]},pattern color=blue]
       (5,0) rectangle (6,5); 
\node[anchor=west, xshift=1em] at (5,1.5) {$\textcolor{blue}{\alpha_{i-s}}$};

\fill[pattern={Lines[angle=135,distance=4pt]},pattern color=red]
       (5,3) rectangle (6,7); 
\node[anchor=west, xshift=-0.5em] at (4,6) {$\textcolor{red}{\lambda_{s}^{\dagger}}$};
\end{tikzpicture}
\]
By construction, the lattice path of $u$ (denoted in blue color) carves out $\alpha$ over $x$-axis, with $\alpha_{i-s}$ be the $y$-coordinate for the $s$-th right step. Similarly, the lattice path of $\hat{v}$ (colored in red) carves out $\lambda$ over $y$-axis. The inequality follows easily by considering the rectangle $[s,s+1]\times [0,q]$ in the $i\times q$ grid as showing in the sketch.
\end{proof}

\section{Two line bundles of consecutive degrees}
Our main goal in this
section is to study the cohomology groups of tensor products of Schur functors applied to the
tautological vector bundles $L_{m-1}^{[d]}$ and $L_{m}^{[d]}$ on $\quot_d(\bP^1,V,r)$, for line bundles
\[
L_{m-1}:=\cO_{\bP^1}(m-1)\quad\text{and}\quad L_m:=\cO_{\bP^1}(m)
\]
when $m$ is sufficiently large. We will prove slightly more general statement than what is stated in the introduction for Schur functors corresponding to highest weights (see Section~\ref{sec:highest_weight}) instead of partitions. The statement is of independent interest, and we state it below.

Let us first recall the notations. We consider Quot scheme $\Quot$, $\quot_d$ for short, for the vector bundle $V$ on $\bP^1$, which splits as
\[
V \cong \cO(-b_1)\oplus \cO(-b_2)\oplus \cdots \oplus \cO(-b_n)
\]
where $0=b_1\le b_2\le \cdots \le b_n$, and thus the sum $b=\sum_{i=1}^{n}b_i$ is a nonnegative number. Furthermore, we will assume that $d\ge d_0(V,r)$ such that $\quot_d$ is an irreducible variety of dimension $nd+rb+r(n-r)$ for the rest of this article. 
\begin{theorem}\label{thm:two_insertions}
    Let $\eta = (\gamma,-\delta)$ and $\rho=(\lambda,-\nu)$ be non-increasing sequences of lengths $\rank L_{m-1}^{[d]}$ and $\rank L_{m}^{[d]}$ respectively. Suppose $\gamma,\lambda,\nu,\delta$ are partitions satisfying
    $$(n-r)(\lvert \lambda \rvert+|\gamma|) + r(|\nu|+\lvert\delta \rvert) < nd+rb+n;\qquad \nu_1+\delta_1 < n-r.$$
    (i) If either $|\delta|\ne 0$ or $|\nu|\ne 0$, and $m\ge b+d$, then
    $$H^i\left(\quot_d,\bS^{\eta} L_{m-1}^{[d]}\otimes \bS^{\rho} L_{m}^{[d]}\right)=0\quad \text{for all }i\ge 0.$$
    (ii) If $\delta=\nu=\emptyset$, then, for $m$ sufficiently large,
    \begin{align}
        H^0\left(\quot_d, \bS^\gamma L_{m-1}^{[d]}\otimes \bS^\lambda L_{m}^{[d]}  \right) \cong \bS^{\gamma}H^0(V\otimes L_{m-1})\otimes \bS^{\lambda}H^0(V\otimes L_m)
    \end{align}
    and all higher cohomology groups vanish.
\end{theorem}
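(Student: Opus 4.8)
The plan is to carry out the strategy announced in the introduction: realize $\quot_d$ inside a product of Grassmannians via Str\o mme's embedding, resolve its structure sheaf by a Koszul complex, and reduce everything to Borel--Weil--Bott computations on the ambient product, controlled by the index criteria of Section~\ref{sec:Indices_of_partition} together with Horn's inequalities.

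\medskip
\noindent\emph{Step 1: reduction to a product of Grassmannians.} For $m$ as in the hypotheses one has Str\o mme's closed immersion
\[
\iota_m\colon \quot_d\hookrightarrow G:=\Gr(k_1,N_1)\times\Gr(k_2,N_2),\qquad N_1=h^0(V\otimes L_{m-1}),\ \ N_2=h^0(V\otimes L_m),
\]
exhibiting $\quot_d$ as the zero locus of a regular section of a vector bundle $\cK$ built from the universal bundles of the two factors; crucially $\cK$ involves only the universal subbundle $\cA_1$ of the first factor and the universal quotient $\cB_2$ of the second (one can take $\cK=W^\vee\otimes\cA_1^\vee\otimes\cB_2$ with $W=H^0(\cO_{\bP^1}(1))$ two-dimensional), while $\iota_m^*\cB_1=L_{m-1}^{[d]}$ and $\iota_m^*\cB_2=L_m^{[d]}$. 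Tensoring the Koszul resolution $0\to\wedge^{\rank\cK}\cK^\vee\to\cdots\to\wedge^1\cK^\vee\to\mathcal{O}_G\to\iota_{m*}\mathcal{O}_{\quot_d}\to0$ by the locally free sheaf $\mathcal{T}:=\bS^\eta\cB_1\otimes\bS^\rho\cB_2$ and applying the projection formula yields a spectral sequence
\[
E_1^{-j,q}=H^q\!\bigl(G,\ \wedge^j\cK^\vee\otimes\mathcal{T}\bigr)\ \Longrightarrow\ H^{q-j}\!\bigl(\quot_d,\ \bS^\eta L_{m-1}^{[d]}\otimes\bS^\rho L_m^{[d]}\bigr),
\]
so it suffices to control each $H^q(G,\wedge^j\cK^\vee\otimes\mathcal{T})$.

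\medskip
\noindent\emph{Step 2: decomposition and the two index criteria.} Applying Cauchy's formula \eqref{eq:cauchyFormula} to $\cK^\vee=\cA_1\otimes(\cB_2^\vee\otimes W)$ decomposes $\wedge^j\cK^\vee$ into summands $\bS^\alpha\cA_1\otimes\bS^\beta\cB_2^\vee$ with $|\alpha|=|\beta|=j$ and explicit multiplicities expressible through Littlewood--Richardson coefficients (via $W\cong\mathcal{O}_G^{\oplus2}$, the multiplicity of $\bS^\alpha\cA_1\otimes\bS^\beta\cB_2^\vee$ is a sum of products $c^\alpha_{\theta,\phi}\,c^{\beta^\dagger}_{\theta,\phi}$ over pairs of partitions with $|\theta|+|\phi|=j$). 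Tensoring by $\mathcal{T}$ and using \eqref{eq:LR_coeff_general} to write $\bS^\beta\cB_2^\vee\otimes\bS^\rho\cB_2$ as a sum of $\bS^\psi\cB_2$, the Künneth formula reduces each $H^q(G,\wedge^j\cK^\vee\otimes\mathcal{T})$ to a sum of terms
\[
H^\bullet\!\bigl(\Gr(k_1,N_1),\ \bS^\alpha\cA_1\otimes\bS^\eta\cB_1\bigr)\ \otimes\ H^\bullet\!\bigl(\Gr(k_2,N_2),\ \bS^\psi\cB_2\bigr).
\]
Since $r<n$ and $m$ is large, $k_2=(n-r)(m+1)-b-d$ is large, so by Lemma~\ref{lem:BWB_index} the second factor vanishes unless $\psi$ is an honest partition, in which case it is $\bS^\psi(\bC^{N_2})$ in degree $0$ and $j\le|\lambda|-|\nu|$. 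By Lemma~\ref{lem:BWB_index_multi}, the first factor vanishes unless $\alpha$ has a well-defined $(\rank\cB_1;\eta)$-index; since $|\alpha|=j$ is then bounded, the numerical hypothesis $(n-r)(|\lambda|+|\gamma|)+r(|\nu|+|\delta|)<nd+rb+n$ together with the size of $\rank\cB_1=d+rm$ forces this index to be $0$, i.e.\ $\alpha\subseteq\delta^\dagger$; in particular every surviving summand has $j\le|\delta|$, $\ell(\alpha)\le\delta_1<n-r$, and is concentrated in a single cohomological degree.

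\medskip
\noindent\emph{Step 3: conclusion.} If $\delta=\nu=\emptyset$ (part~(ii)), then $\alpha\subseteq\delta^\dagger=\emptyset$ forces $\alpha=\emptyset$ and $j=0$, so $E_1^{-j,q}=0$ for all $j\ge1$; the only surviving term is $E_1^{0,q}=H^q(G,\bS^\gamma\cB_1\otimes\bS^\lambda\cB_2)$, which by Borel--Weil--Bott (Remark~\ref{rem:BWB_remark}) and Künneth equals $\bS^\gamma(\bC^{N_1})\otimes\bS^\lambda(\bC^{N_2})$ in degree $0$ and vanishes in positive degrees; identifying $\bC^{N_1}=H^0(V\otimes L_{m-1})$ and $\bC^{N_2}=H^0(V\otimes L_m)$ gives the claimed isomorphism. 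For part~(i): if $\delta=\emptyset$ but $\nu\ne\emptyset$, again only $j=0$ survives, and that term vanishes because $\bS^\rho\cB_2$ has no cohomology on $\Gr(k_2,N_2)$ ($\rho$ has a negative part); if $\delta\ne\emptyset$, the $j=0$ term vanishes because $\bS^\eta\cB_1$ has no cohomology on $\Gr(k_1,N_1)$, and it remains to kill the terms with $j\ge1$. For these, one argues term by term: fixing $\alpha,\beta$ appearing in $\wedge^j\cK^\vee$, if the $\cA_1$-factor is nonzero then $\alpha\subseteq\delta^\dagger$ (so $\alpha$ is thin, $\ell(\alpha)<n-r$) and if the $\cB_2$-factor is nonzero then $\psi$ is a partition with $c^\psi_{-\beta,\rho}\ne0$ and $|\psi|=|\lambda|-|\nu|-j\ge0$; feeding these containments, together with positivity of $\sum c^\alpha_{\theta,\phi}c^{\beta^\dagger}_{\theta,\phi}$, into Horn's/dominance inequalities (Proposition~\ref{prop:LR_coefficients_properties}) and the hypotheses $(n-r)(|\lambda|+|\gamma|)+r(|\nu|+|\delta|)<nd+rb+n$, $\nu_1+\delta_1<n-r$, shows that these cannot hold simultaneously when $j\ge1$. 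Hence $E_1^{-j,q}=0$ for all $j\ge1$ and all cohomology vanishes.

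\medskip
\noindent\emph{Expected main obstacle.} The heart of the proof is the last case: showing, for $j\ge1$ and $\delta\ne\emptyset$, that every term of the decomposed $\wedge^j\cK^\vee\otimes\mathcal{T}$ has vanishing cohomology. This requires weaving together the $(t;\eta)$-index criterion on the first Grassmannian, the $t$-index criterion on the second, and Horn's inequalities for the several Littlewood--Richardson coefficients entering the triple convolution $\wedge^j\cK^\vee\otimes\bS^\eta\cB_1\otimes\bS^\rho\cB_2$, all while keeping careful track of the growing ranks $\rank\cB_1$, $\rank\cA_1$, $k_2$ against the bounded partition data; in part~(i), where only $m\ge b+d$ is available rather than ``$m$ large'', the numerical hypotheses are nearly sharp (the bound on $|\lambda|+|\gamma|$ cannot be relaxed, cf.\ Example~\ref{ex:size_constraints}) and must be used delicately. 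Everything else — the degeneration of the spectral sequence and the Borel--Weil--Bott evaluation of the $j=0$ term — is then formal.
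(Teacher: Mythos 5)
Your overall framework is the same as the paper's: Str\o mme embedding, Koszul resolution of $\iota_*\cO_{\quot_d}$ tensored with $\bS^\eta\cB_1\boxtimes\bS^\rho\cB_2$, Cauchy/Littlewood--Richardson decomposition of $\wedge^t\cK^\vee$, and Borel--Weil--Bott via the index criteria. However, there is a genuine gap exactly at the crux, namely the vanishing of the Koszul terms with $t\ge 1$. Your two Step~2 claims that are supposed to do this work are unjustified and, as stated, false. First, ``$k_2$ large forces the $\cB_2$-factor to be concentrated in degree $0$ with $\psi$ an honest partition'' fails: the weights $\chi$ (your $-\psi$) occurring on the second factor satisfy only $\chi_1\le\sigma_1+\nu_1$ with $\sigma_1\le 2k_1$, and since $k_2=k_1+(n-r)$, entries of size $\ge k_2+1$ are perfectly possible once $k_1>n-r-\nu_1$; enlarging $m$ makes this worse, not better, so the $k_2$-index $j$ can be positive. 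Second, ``the numerical hypothesis together with $\rank\cB_1=rm+d$ forces the $(r_1;\eta)$-index of $\alpha$ to be $0$, so $\alpha\subseteq\delta^\dagger$'' is also not a consequence of the hypotheses taken factor-by-factor: a positive index only forces $|\alpha|=t\gtrsim rm+d$, which is far below $\rank\cK\sim 2k_1r_2$ and contradicts nothing, since the size bound in the theorem constrains $\gamma,\lambda,\nu,\delta$ but not $t$. Neither factor can be killed on its own; the contradiction only emerges by playing the two indices against each other.

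That interplay is precisely what your Step~3 defers to ``feeding these containments into Horn's/dominance inequalities shows that these cannot hold simultaneously,'' and it is the entire technical content of the paper's proof: a four-case analysis on the relative position of the $(r_1;\eta)$-index $i$ and the $k_2$-index $j$ (the cases $j>i$, $j<i\le 2j$, $2j<i$, $i=j$), using the Weyl and dominance inequalities, an auxiliary conjugate-side dominance statement for generalized Littlewood--Richardson coefficients (the paper's Lemma~\ref{lem:LR_dagger}), and the identity $(n-r)r_1-rk_1=nd+rb$ to convert the combined inequalities into the bound $(n-r)(|\gamma|+|\lambda|)+r(|\nu|+|\delta|)\ge nd+rb+n$, contradicting the hypothesis. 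Your shortcuts also obscure why part~(ii) needs ``$m$ sufficiently large'' while part~(i) needs only $m\ge b+d$: that distinction arises inside the case $2j<i$ with $j=0$, $\delta=\emptyset$, where the contradiction requires $|\gamma|+|\lambda|\ge rm+d$, and it cannot be seen if one assumes from the start that only $t=0$ terms survive. The $t=0$ analysis in your Step~3 (Borel--Weil--Bott for the first term, and vanishing when $\delta$ or $\nu$ is nonempty, which needs $\delta_1\le k_1$, $\nu_1\le k_2$ rather than merely ``$\rho$ has a negative part'') matches the paper, but without a correct argument for $t\ge 1$ the proof is incomplete.
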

The condition for $m$ to be sufficiently large in part(ii) of Theorem~\ref{thm:two_insertions} will be removed in the next section. The proof of Theorem~\ref{thm:two_insertions} is quite involved, its components will be split in several subsections below. Using the same method, we will also obtain a bound on the cohomological degree of $\bS^{\eta} L_{m-1}^{[d]}\otimes \bS^{\rho} L_{m}^{[d]}$ in Proposition~\ref{prop:cohomological_degree_no_size_constraints} that holds with no restrictions on $\nu$ and $\rho$.
\begin{rem}
The condition $b_1=0$ is not an extra assumption, since there is an isomorphism
$$
\quot_d(\bP^1,V,r)\;\cong\; \quot_{\,d+rb_1}\!\big(\bP^1, V(b_1), r\big),
$$
which takes $L_{m}^{[d]}$ to $L_{m-b_1}^{[d+rb_1]}$ on the right, which reduces the analysis to our cases.
\end{rem}
\begin{rem}\label{rem:structure_sheaf}
    The special case when $\eta$ and $\rho$ are trivial in Theorem~\ref{thm:two_insertions} implies that $$H^0(\quot_d,\cO_{\quot_d}) = \bC,$$ while all higher cohomology groups vanish. This is consistent with the fact that $\quot_d(\bP^1,\cO_{\bP^1}^{\oplus n}, r)$ is a rational variety \cite{Stromme}.  
\end{rem}

\subsection{Str\o mme embedding}\label{sec:Stromme_embedding}
There is an embedding of the Quot scheme into a product of two Grassmannians 
(see \cite{Stromme} for details):
\begin{equation}\label{eq:Stromme_iota_sec}
   \iota:\quot_d(\bP^1, V,r)\hookrightarrow \Gr(k_1,N_1)\times \Gr(k_2,N_2).
\end{equation}
The explicit values of $k_1,k_2,N_1,$ and $N_2$ depend on the choice of an 
integer $m \ge b+d$, and are given as follows:
    \begin{align*}
        N_1 &= H^0(V(m-1))=nm - b,
                &N_2 &=H^0(V(m))= n(m+1) - b, \nonumber\\ 
                 r_1 &=\rank(L_{m-1}^{[d]})= rm + d,  
&r_2&=\rank(L_m^{[d]})= r(m+1) + d,\\
k_1&= \rank(L_{m-1}^{\{d\}}) = (n-r)m-b-d, & k_2&=\rank(L_{m}^{\{d\}})=  (n-r)(m+1)-b-d.\nonumber
    \end{align*}
We give a short overview of the construction. Let us recall the universal exact sequence
\[ 0 \to \cE \to p^*V \to \cF \to 0,\] 
on $\quot_d(\bP^1, V,r) \times \bP^1$ where $\pi$ and $p$ are projections to $\quot_d(\bP^1,V,r)$ and $\bP^1$ respectively. Twisting the above short exact sequence by $p^*L_m$ and pushing forward, we get an exact sequence of vector bundles (since $m\ge b+d$)
\begin{equation}\label{eq:pushFwdTwistedUniversalSeq}
   0 \to \pi_*(\cE(m)) \to H^0(V(m)) \otimes 
\cO_{\quot_{d}} \to \pi_*(\cF(m)) \to 0.
\end{equation}
This gives us a morphism from $\iota_m \colon \quot_d(\bP^1, V,r) \to \Gr(k_2,N_2)$.
Similarly, twisting by $p^*L_{m-1}$ yields an analogous sequence of 
vector bundles as in \eqref{eq:pushFwdTwistedUniversalSeq} and hence a morphism $\iota_{m-1} \colon \quot_d(\bP^1, V,r) \to \Gr(k_1,N_1)$. The Str\o mme embedding in \eqref{eq:Stromme_iota_sec} is defined as $\iota:= (\iota_{m-1}, \iota_m)$.

We have universal short exact sequences 
\begin{equation*}
    0 \to \cA_i \to W_i \to \cB_i \to 0
\end{equation*}
on each $\Gr(k_i,N_i)$ for $i=1,2$ where 
$$W_1 := H^0(V(m-1))\otimes\cO_{\Gr(k_1,N_1)} \quad \text{and}\quad W_2=H^0(V(m))\otimes \cO_{\Gr(k_2,N_2)}. $$
By a slight abuse of notation, we use the same notation for the pullbacks of these bundles to the product $\Gr(k_1,N_2)\times \Gr(k_2,N_2)$ along the projection maps.

Consider the section $\cO_{\bP^1} \to \cO_{\bP^1}(1) \boxtimes \cO_{\bP^1}(1)$ defining the diagonal $\Delta \subset \bP^1 \times \bP^1$. Taking the tensor product  with the pullback of $V(m-1)$ along the first factor of $\bP^1$ and passing to cohomology, we get an inclusion map
\[ W_1 \to W_2 \otimes H^0(\cO_{\bP^1}(1)).\]
Consider the following diagram on $\Gr(k_1,N_1)\times \Gr(k_2,N_2)$,
\[
\begin{tikzcd}
    0 \arrow[r] & \cA_1 \arrow[r] & W_1   \arrow[r] \arrow[d] & \cB_1 \arrow[r] & 0\\
    0 \arrow[r] & \cA_2 \otimes H^0(\cO_{\bP^1}(1)) \arrow[r] & W_2 \otimes  H^0(\cO_{\bP^1}(1))  \arrow[r] & \cB_2  \otimes H^0(\cO_{\bP^1}(1)) \arrow[r] & 0.
\end{tikzcd}
\]
The composition $$\cA_1 \to W_1  \to  W_2 \otimes  H^0(\cO_{\bP^1}(1)) \to \cB_2 \otimes H^0(\cO_{\bP^1}(1))$$ gives us a section $s$ of the vector bundle
\[\ca{K}:= \cA_1^\vee \otimes \cB_2 \otimes H^0(\cO_{\bP^1}(1)).\]

Following the proof of \cite[Thm.~4.1]{Stromme} with minor modifications, one shows that $\iota$ is a closed embedding. Moreover, the image $\iota(\quot_d)$ is precisely the zero locus of the section $s$ defined above. We note here that 
\[L_{m-1}^{[d]} \cong \iota^*(\cB_1) \quad \text{ and } \quad  L_{m}^{[d]} \cong \iota^*(\cB_2).\]
Similarly, we have
\[
L_{m-1}^{\{d\}} \cong \iota^*(\cA_1) \quad \text{ and } \quad  
L_{m-1}^{\{d\}} \cong \iota^*(\cA_2).\]

\subsection{Proof of Theorem~\ref{thm:two_insertions}}\label{sec:Kozsul_resolution}
Let $\iota: \Quot \to \Gr(k_1, N_1) \times \Gr(k_2, N_2)$ be the Str\o mme embedding associated to the positive integer $m$, as in Section~\ref{sec:Stromme_embedding}.
As discussed in the previous section, $\iota(\quot_d)$ is the vanishing locus of a regular section $s$ of the vector bundle $\ca{K}$. The assumption $d\ge d_0(V,r)$ implies that $\quot_d$ is a local complete intersection of expected dimension
\[
\dim(\Gr(k_1,N_1)\times \Gr(k_2,N_2))-\rank \cK = nd+rb+r(n-r).
\]
We thus have the Koszul resolution
\begin{equation}\label{eqn: koszul_res} 
    \begin{tikzcd}
    \cdots \arrow[r] & \bigwedge^2 \ca{K}^\vee \arrow[r] & \ca{K}^\vee \arrow[r] & \cO_{\Gr(k_1,N_1) \times \Gr(k_2,N_2)} \arrow[r] & \iota_* \cO_{\quot_d} \arrow[r] & 0.
\end{tikzcd}
\end{equation}
We can apply formulas \eqref{eq:cauchyFormula} and \eqref{eq: decompDirectSum} to decompose $\bigwedge^t \ca{K}^\vee$ as
\begin{align*}
    \bigwedge^t \ca{K}^\vee &= \bigwedge^t (\cA_1 \boxtimes (\cB_2^\vee \oplus \cB_2^\vee)\\
    &= \bigoplus_{\lvert \mu \rvert = t} \bS^{\mu}\cA_1 \boxtimes \bS^{\mu^\dagger}(\cB_2^\vee \oplus \cB_2^\vee)\\
    &= \bigoplus_{\lvert \mu \rvert = t} \bS^{\mu}\cA_1 \boxtimes \left( \bigoplus_{\alpha,\beta, \sigma} (\bS^\sigma \cB_2^\vee )^{\oplus c_{\alpha,\beta}^{\mu^\dagger}\cdot c_{\alpha, \beta}^\sigma} \right).
\end{align*}
This shows that the direct summands of $\bigwedge^t \ca{K}^\vee$ are of the form
\[ \bS^\mu \cA_1 \boxtimes \bS^\sigma \cB_2^\vee \quad \text{with } \lvert \mu \rvert = t \text{ and there exist $\alpha, \beta$ such that } c_{\alpha,\beta}^{\mu^\dagger}\cdot c_{\alpha,\beta}^\sigma \neq 0.\]
\begin{proof}[Proof of Theorem~\ref{thm:two_insertions}]
    The tautological bundles in consideration can be expressed as a pullback via \eqref{eq:Stromme_iota_sec}
    \[
     \bS^\eta L_{m-1}^{[d]} = \iota^* \bS^\eta\cB_{1} \quad \text{and}\quad \bS^\rho L_{m}^{[d]} = \iota^* \bS^\rho\cB_{2}.
    \]
      Tensoring the Koszul resolution in \eqref{eqn: koszul_res} with $S^{\eta} \cB_1\boxtimes \bS^{\rho}\cB_{2}$, we obtain
\begin{equation}\label{eq:resolution_single_taut}
    \begin{tikzcd}
    \cdots \arrow[r] & \ca{V}_2 \arrow[r] & \ca{V}_1 \arrow[r] & \ca{V}_0 \arrow[r,"\phi"] & \iota_* \left(\bS^{\eta} L_{m-1}^{[d]}\otimes \bS^{\rho} L_{m}^{[d]}\right) \arrow[r] & 0.
\end{tikzcd}
\end{equation}
where 
$
\ca{V}_t:= 
\left(\bS^{\eta}\cB_{1}\boxtimes \bS^{\rho}\cB_{2}\right) \otimes
\bigwedge^t \ca{K}^\vee.
$

We will show below in Lemma~\ref{lem:Vanishing_V^t} that for all $t\ge 1$, all cohomology groups of $\cV_t$ vanish (with the given conditions on $m$ and the partitions involved).
This implies that $\phi$ induces natural isomorphisms of cohomology groups 
\[
H^i\!\left(\Gr(k_1,N_1)\times \Gr(k_2,N_2),\,\bS^{\eta}\cB_1\boxtimes \bS^{\rho}\cB_{2}\right)
\;\cong\;
H^i\!\left(\quot_d,\,\bS^{\eta} L_{m-1}^{[d]}\otimes \bS^{\rho} L_{m}^{[d]}\right)
\]
for all $i\ge 0$.

When $\eta=\gamma$ and $\rho=\lambda$ are partitions, Borel–Weil–Bott theorem (see Remark~\ref{rem:BWB_remark}) tells us that the cohomology groups of $\bS^{\gamma}\cB_1$ and $\bS^{\lambda}\cB_2$ are nonzero only in degree~$0$, where they are naturally isomorphic to $\bS^{\gamma}H^0(V\otimes L_{m-1})$ and $\bS^{\lambda}H^0(V\otimes L_{m})$, respectively. Hence, proving part (ii).

If $\delta$ (resp. $\nu$) is nontrivial, then by the Borel–Weil–Bott theorem (Theorem~\ref{thm:BWB}), all cohomology groups of $\bS^{\eta}\cB_1$ (resp. $\bS^{\rho}\cB_2$) vanish whenever
\[
\delta_1\le k_1 \quad \text{(resp. } \nu_1\le k_2 \text{)}.
\]
Using the assumptions $m\ge d+b$ and $\delta_1+\nu_1<n-r$, we have
\[
\delta_1\le n-r-1\le (n-r-1)(b+d)\le (n-r)m-(b+d)=k_1,
\]
and similarly $\nu_1\le k_2$. Note that $d+b\ge0$ (considering the degree of the subsheaf $S\subset V\subset \cO_{\bP^1}^{\oplus n}$). The extremal case $d+b=0$ implies $S$ is trivial, and thus $b=d=0$, which  corresponds to the case when $\quot_d$ is the Grassmannian; there the claim follows directly from Borel–Weil–Bott.

\end{proof}
\subsection{Technical vanishing lemma}
\begin{lem}\label{lem:Vanishing_V^t}
    For all $t\ge 1$, all cohomology groups of $\cV_t$ in \eqref{eq:resolution_single_taut} vanish, if
    \begin{itemize}[leftmargin=2em]
        \item[(a)] either $\delta$ or $\nu$ is a non-empty partition; or
        \item[(b)] $\delta=\nu=\emptyset$ and $m$ is sufficiently large.
    \end{itemize}
\end{lem}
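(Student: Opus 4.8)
To prove Lemma~\ref{lem:Vanishing_V^t}, I would analyze the cohomology of each direct summand of $\cV_t$ separately, using the K\"unneth formula to split the computation on $\Gr(k_1,N_1)\times\Gr(k_2,N_2)$ into the two factors. Recall that a typical summand of $\bigwedge^t\cK^\vee$ has the form $\bS^\mu\cA_1\boxtimes\bS^\sigma\cB_2^\vee$ with $|\mu|=t$ and $c_{\alpha,\beta}^{\mu^\dagger}c_{\alpha,\beta}^\sigma\neq 0$ for some $\alpha,\beta$; after tensoring by $\bS^\eta\cB_1\boxtimes\bS^\rho\cB_2$ and decomposing the products $\bS^\mu\cA_1\otimes\bS^\eta\cB_1$ (wait --- these live on the same Grassmannian but involve both $\cA_1$ and $\cB_1$, which is exactly the setting of Lemma~\ref{lem:BWB_index_multi}) and $\bS^\sigma\cB_2^\vee\otimes\bS^\rho\cB_2$ (via Littlewood--Richardson, reducing $\bS^\sigma\cB_2^\vee\otimes\bS^\rho\cB_2$ to a sum of $\bS^\tau\cB_2$-type terms with $\tau$ a highest weight), every summand of $\cV_t$ is of the form $\bigl(\bS^{\mu}\cA_1\otimes\bS^{\eta}\cB_1\bigr)\boxtimes \bS^{\tau}\cB_2$ for suitable data. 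By K\"unneth, such a summand has vanishing cohomology unless \emph{both} tensor factors have nonzero cohomology on their respective Grassmannians.

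\textbf{Key steps.} First I would set up the bookkeeping: fix a summand, record the constraints $|\mu|=t\ge 1$, $c_{\alpha,\beta}^{\mu^\dagger}c_{\alpha,\beta}^\sigma\neq0$, and the decomposition of $\bS^\sigma\cB_2^\vee\otimes\bS^\rho\cB_2$. Second, I would assume for contradiction that the summand has nonzero cohomology, so by Lemma~\ref{lem:BWB_index_multi} applied on $\Gr(k_1,N_1)$ the partition $\mu$ must have a well-defined $(k_1;\eta)$-index $i$ (with $\eta=(\gamma,-\delta)$), and simultaneously the second factor $\bS^\tau\cB_2$ must have nonzero cohomology, which via Lemma~\ref{lem:BWB_index} (or Theorem~\ref{thm:BWB} directly) forces a constraint on $\tau$, hence on $\sigma$ and $\rho$. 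Third --- and this is where the numerical hypotheses enter --- I would combine: (a) the size bound $(n-r)(|\lambda|+|\gamma|)+r(|\nu|+|\delta|)<nd+rb+n$, (b) the width bound $\nu_1+\delta_1<n-r$, (c) the inequalities coming from the $(k_1;\eta)$-index of $\mu$ in Lemma~\ref{lem:BWB_index_multi}(i), which bound $\mu$ from below in terms of $\gamma^\dagger,\delta^\dagger$ and $k_1$, and (d) the Horn/Weyl/dominance inequalities from Proposition~\ref{prop:LR_coefficients_properties} applied to $c_{\alpha,\beta}^{\mu^\dagger}\neq0$ and $c_{\alpha,\beta}^\sigma\neq0$. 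The $(k_1;\eta)$-index inequalities say $\mu$ has at least $i$ large rows (each of size $\gtrsim k_1=(n-r)m-b-d$, which is of order $m$), so $|\mu|=t$ is forced to be large --- of order $m$ --- whenever the index $i$ is positive; but on the other hand, the dominance inequalities force $|\sigma|=|\mu^\dagger|=t$ and, combined with the column bound $\sigma_1\le\mu^\dagger_1$ and the width constraint on $\nu,\delta$, force $\tau$ to be too wide or too large for $\bS^\tau\cB_2$ to have cohomology. Balancing the size budget in (a) against the forced size in (c)+(d) yields the contradiction. In the case $i=0$ (so $\mu_1<k_1$-ish, i.e. $\mu^\dagger$ has few rows), $\mu$ itself is a genuine partition with $\mu_1\le$ something, and the analysis collapses to showing that the first tensor factor $\bS^\mu\cA_1\otimes\bS^\gamma\cB_1$ (now with $\delta$ playing a limited role) together with the $\cB_2$-factor cannot both survive --- here one uses that $t\ge1$ means $\mu\neq\emptyset$, so $\mu$ has a nonzero part, and the $(k_1;\eta)$-index condition with $i=0$ reads $\mu_1\le\delta_1^\dagger\le$ (number of parts of $\delta$) $\le n-r-1$, constraining $\mu$ to be a thin partition, whence $\sigma=\mu^\dagger$ has at most $n-r-1$ parts and one argues on the $\cB_2$ side directly.

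\textbf{Main obstacle.} The hard part will be the case analysis and the careful chaining of the Horn-type inequalities: extracting from $c_{\alpha,\beta}^{\mu^\dagger}\cdot c_{\alpha,\beta}^\sigma\neq0$ the right bounds relating $\sigma$, $\mu^\dagger$, $\alpha$, $\beta$ and then propagating them through the Littlewood--Richardson decomposition of $\bS^\sigma\cB_2^\vee\otimes\bS^\rho\cB_2$ into $\bS^\tau\cB_2$, so that the index/width obstruction on $\tau$ is genuinely implied by the hypotheses on $(\lambda,\nu)$ and the size budget. I expect the argument to require treating $i=0$ and $i\ge1$ separately, and within $i\ge1$, separately tracking how much of the "size budget'' $nd+rb+n$ is consumed by $k_1\sim m$ versus by the partitions; the role of hypothesis (b) for part (a), versus the "$m$ sufficiently large'' hypothesis for part (b), is precisely to make these two accountings close. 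I would isolate the key inequality manipulation as a standalone sub-lemma to keep the proof of Lemma~\ref{lem:Vanishing_V^t} readable.
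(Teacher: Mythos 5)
Your setup is the same as the paper's: decompose $\bigwedge^t\cK^\vee$ via Cauchy and Littlewood--Richardson, write each summand of $\cV_t$ as $\bigl(\bS^{\mu}\cA_1\otimes\bS^{\eta}\cB_1\bigr)\boxtimes\bS^{\chi}\cB_2^{\vee}$, use K\"unneth, and apply Lemma~\ref{lem:BWB_index_multi} on the first factor and Borel--Weil--Bott on the second. But the core of the argument is missing, and the quantitative claims you put in its place do not hold. First, a detail: Lemma~\ref{lem:BWB_index_multi} gives $\mu$ a well-defined $(r_1;\eta)$-index, $r_1=\rank\cB_1=rm+d$, not a $(k_1;\eta)$-index, so the long rows of $\mu$ have length about $r_1$. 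Second, and decisively, ``$t=|\mu|$ is forced to be of order $m$'' is not a contradiction: $t$ legitimately runs up to $\rank\cK=2k_1r_2$, and the hypothesis $(n-r)(|\lambda|+|\gamma|)+r(|\nu|+|\delta|)<nd+rb+n$ constrains only $\gamma,\delta,\lambda,\nu$ --- never $\mu,\sigma,\chi$ --- and is independent of $m$, so no balancing of a ``size budget'' against a quantity of order $m$ can close the argument. What the paper actually does is bring in the second index explicitly: nonvanishing of the $\cB_2$-factor forces $\chi$ to have a well-defined $k_2$-index $j$ (Lemma~\ref{lem:BWB_index}), and the contradiction comes from comparing $i$ and $j$ in four cases ($j>i$, $j<i\le 2j$, $2j<i$, $i=j$), chaining Weyl, dominance and Horn inequalities through $c_{\alpha,\beta}^{\mu^\dagger}$, $c_{\alpha,\beta}^{\sigma}$ and the generalized coefficient $c_{\sigma,-\rho}^{\chi}$ (the latter requiring the separate Lemma~\ref{lem:LR_dagger} controlling conjugate partial sums of $\chi$ by those of $\sigma$ up to $|\lambda|$). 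The $m$-dependence cancels precisely because $k_2-k_1=n-r$, $r_2-r_1=r$ and $(n-r)r_1-rk_1=nd+rb$: multiplying the two key inequalities by $r$ and $n-r$ and adding produces the threshold $nd+rb+n$, which is how the size hypothesis enters. The dichotomy between hypotheses (a) and (b) appears at exactly one point of this analysis ($2j<i$ with $j=0$ and $\delta=\emptyset$), where one instead gets $|\gamma|+|\lambda|\ge r_1=rm+d$ and needs $m$ large. None of this cross-factor mechanism is present in your plan.

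In addition, your treatment of the boundary case is incorrect as stated: when the index is $0$ the condition reads $\mu_1\le\delta_1^\dagger$, the \emph{number of parts} of $\delta$, which is not bounded by $n-r-1$ --- the hypothesis $\nu_1+\delta_1<n-r$ bounds the widths $\delta_1,\nu_1$, not the lengths, and $|\delta|$ may be as large as roughly $(nd+rb+n)/r$; also $\sigma$ is not $\mu^\dagger$, one only knows $|\sigma|=|\mu^\dagger|$ plus Horn-type inequalities linking them. So while you have identified the right ingredients (the decomposition and the two index lemmas), the proposal does not yet contain the argument that makes the hypotheses $(n-r)(|\lambda|+|\gamma|)+r(|\nu|+|\delta|)<nd+rb+n$ and $\nu_1+\delta_1<n-r$ do their work, which is where essentially all of the difficulty of Lemma~\ref{lem:Vanishing_V^t} lies.
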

\begin{proof}
Following the discussion in Section~\ref{sec:Kozsul_resolution}, $\bigwedge^t \ca{K}^\vee$ splits as a direct sum
\[ \bigwedge^t \ca{K}^\vee = \bigoplus_{|\mu|=t} \left(\bS^\mu \cA_1 \boxtimes \bS^\sigma (\cB_2^\vee)^{\oplus c_{\alpha,\beta}^{\mu^\dagger}\cdot c_{\alpha,\beta}^\sigma}\right) \]
where $\mu$ runs over all partitions of size $t$ and $\alpha, \beta$ and $\sigma$ runs over partitions. We see that $\cV_t$ has direct summands of the form
\[  \left(\bS^\mu \cA_1 \otimes \bS^\eta \cB_1 \right) \boxtimes \left(\bS^\chi \cB_2^\vee\right).\]
In order for such a summand to exist and have a nonzero cohomology group, the following conditions must hold simultaneously:
\begin{itemize}[leftmargin=2em]
\item The Littlewood-Richardson coefficients are non-zero, i.e., 
    \[c_{\alpha,\beta}^{\mu^\dagger}\cdot c_{\alpha,\beta}^\sigma\cdot c_{\sigma,-\rho}^{\chi}\ne0.\]
    Here $c_{\sigma,-\rho}^{\chi}$ is the generalized Littlewood-Richardson coefficient \eqref{eq:LR_coeff_general} for $\mathrm{GL}_{r_2}(\bC)$-representaions and $-\rho=(\nu,-\lambda)$. 

 \item Not all cohomology groups of $\bS^\mu \cA_1\otimes \bS^{\eta}\cB_1$ vanish over $\Gr(k_1,N_1)$. Lemma~\ref{lem:BWB_index_multi} implies that we have a well-defined
    $$i:=(r_1;\eta)\text{-index of }\mu.$$
    Recall $\eta=(\gamma,-\delta)$ and note that $\gamma_1^\dagger+\delta_1^\dagger\le r_1$. The index $i$ satisfies the following
      \[\mu_{i + 1 - s} \ge i + r_1 - \gamma_s^\dagger \quad \text{and} \quad \mu_{i + s} \le i + \delta_s^\dagger \quad \text{for all } s \ge 1.\]
      This is illustrated in Figure \ref{fig:young-diagram-mu}.
\begin{figure} 
  \centering
\begin{equation*}
\ytableausetup{boxsize=0.7em}
\begin{tikzpicture}
      \draw (0, 0) node[anchor=north west] (YD) {%
        \ydiagram{20,19,18,17,15, 6, 4, 4,3, 3,2,1}
        *[*(lightgray!70)]{5,5,5,5,5}
      };
\fill[red!50,opacity=0.3] 
    ([xshift=12.2em,yshift=-1.9em]YD.north west)
    rectangle ++(0.75em,-0.75em);
\fill[red!50,opacity=0.3] 
    ([xshift=10.7em,yshift=-2.6em]YD.north west)
    rectangle ++(2.3em,-1.5em);
    
\draw[red, very thick]
  ([xshift=12.9em,yshift=-1.85em]YD.north west) -- 
  ++(0,1.5em);
            \draw[red, very thick]
  ([xshift=12.2em,yshift=-1.85em]YD.north west) -- 
  ++(0.7em,0);
 \draw[red, very thick]
  ([xshift=12.2em,yshift=-1.85em]YD.north west) -- 
  ++(0,-0.7em);
  \draw[red, very thick]
  ([xshift=10.7em,yshift=-2.55em]YD.north west) -- 
  ++(1.5em,0);
  \draw[red, very thick]
  ([xshift=10.7em,yshift=-2.55em]YD.north west) -- 
  ++(0,-1.5em);
   \draw[red, very thick]
  ([xshift=8.5em,yshift=-4.1em]YD.north west) -- 
  ++(2.2em,0);

\draw[black, dotted]([xshift=12.95em,yshift=-1.8em]YD.north west) -- 
  ++(0,-2.3em);
  \draw[black, dotted]([xshift=12.25em,yshift=-2.5em]YD.north west) -- 
  ++(0,-1.6em);

      \fill[blue!50,opacity=0.3] 
    ([xshift=4.1em,yshift=-4.1em]YD.north west)
    rectangle ++(2.8em,-0.7em);
          \fill[blue!50,opacity=0.3] 
    ([xshift=4.1em,yshift=-4.8em]YD.north west)
    rectangle ++(1.4em,-0.7em);
    \draw[blue, very thick]
  ([xshift=7em,yshift=-4.1em]YD.north west) -- 
  ++(1.5em,0em);
            \draw[blue, very thick]
  ([xshift=7em,yshift=-4.1em]YD.north west) -- 
  ++(0,-0.7em);
            \draw[blue, very thick]
  ([xshift=5.5em,yshift=-4.8em]YD.north west) -- 
  ++(1.5em,0em);
  
              \draw[blue, very thick]
  ([xshift=5.5em,yshift=-4.8em]YD.north west) -- 
  ++(0,-0.7em);
          \draw[blue, very thick]
  ([xshift=4.1em,yshift=-5.5em]YD.north west) -- 
  ++(1.4em,0em);
                \draw[blue, very thick]
  ([xshift=4.1em,yshift=-5.5em]YD.north west) -- 
  ++(0,-3.8em);
\draw[black,dotted]([xshift=5.55em,yshift=-4.1em]YD.north west) -- 
  ++(0em,-0.7em);
  \draw[black,dotted]([xshift=6.3em,yshift=-4.1em]YD.north west) -- 
  ++(0em,-0.7em);
\draw[black,dotted]([xshift=4.8em,yshift=-4.1em]YD.north west) -- 
  ++(0em,-1.5em);
  \draw[black,dotted]([xshift=4.1em,yshift=-4.1em]YD.north west) -- 
  ++(0em,-1.5em);

    \draw (4, -3) node {$\mu$};
            \draw (4.95, -1.75) node {\tiny $\textcolor{red}{\gamma_1}$};
         \draw (4.65, -1.75) node {\tiny $\textcolor{red}{\gamma_2}$};
         \draw (4.05, -1.75) node {\tiny $\textcolor{red}\cdots$};
           \draw (3.45, -1.75) node {\tiny $\textcolor{red}{\gamma_q}$};

         \draw (3.2, -1.75) node {\tiny $\textcolor{blue}{\delta_p}$};
         \draw (2.35, -2) node {\tiny $\textcolor{blue}{\cdots}$};
         \draw (2.02, -2.3) node {\tiny $\textcolor{blue}{\delta_2}$};
         \draw (1.72, -2.3) node {\tiny $\textcolor{blue}{\delta_1}$};


    \draw[decorate,decoration={brace,amplitude=3pt},thick]
    ([xshift=0.5em,yshift=0em]YD.north west) -- ++(3.5em,0)
    node[midway,above=4pt] {$i$};
    
    \draw[brown, very thick]([xshift=8.5em,yshift=-4.1em]YD.north west) -- 
  ++(0em,3.8em);
\draw[decorate,decoration={brace,amplitude=3pt},thick]
    ([xshift=4.4em,yshift=0em]YD.north west) -- ++(8.3em,0)
    node[midway,above=4pt] {$r_1$};

\draw[decorate,decoration={brace,mirror,amplitude=3pt},thick]
    ([xshift=-0em,yshift=-0.5em]YD.north west) -- ++(0,-8.7em)
    node[midway,left=4pt] {$k_1$};
    \end{tikzpicture}
\end{equation*}

 \caption{A schematic diagram for the shape of the partition $\mu$, whose $(r_1,\eta)$-index equals $i$. 
 The first $i$ parts of $\mu$ must lie to the right of bold red outline by $\gamma$ and the last $k_1-i$ parts of $\mu$ lie left of blue line defined by $\delta$.}
  \label{fig:young-diagram-mu}
\end{figure}
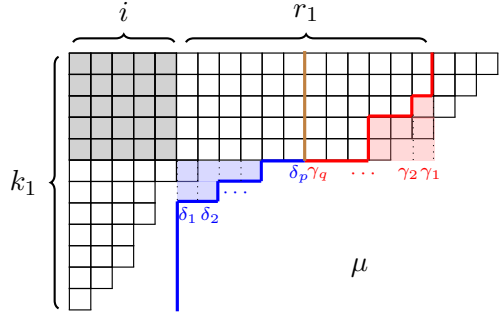

   \item Not all cohomology groups of $\bS^{\chi}\cB_2^{\vee}$ vanish over $\Gr(k_2,N_2)$. In such a case, Lemma~\ref{lem:BWB_index} implies that we have a well-defined
       \[
       j:= k_2\text{-index of }\chi. 
       \]
       The first  inequality in Lemma~\ref{lem:BWB_index} implies that first $j$ entries $\chi_1,\dots\chi_j\ge k_2+j$. The second inequality in Lemma~\ref{lem:BWB_index} and rank of $\cB_2$ implies $\chi_{j+1},\dots,\chi_{r_2}\le j$. This is illustrated  in Figure~\ref{fig:young-diagram-chi}.

\begin{figure} 
  \centering
  \begin{equation*}
  \ytableausetup{boxsize=0.7em}
  \begin{tikzpicture}
        \draw (0, 0) node[anchor=north west] (YD) {%
          \ydiagram{25,23,22,21,20, 5, 5, 4,3, 3,3,2,0,0}
          *[*(lightgray!70)]{5,5,5,5,5}
          *[]{2, 3, 4}
        };
        \draw[red, very thick]
          ([xshift=15.2em,yshift=-0.3em]YD.north west) -- ++(0,-3.8em);
        \draw[blue, very thick]
          ([xshift=4.1em,yshift=-4.1em]YD.north west) -- ++(0,-6.7em);
        \draw[black, dotted]
          ([xshift=-1.7em,yshift=-10.8em]YD.north west) -- ++(5.8em,0em);
        \draw (4, -3) node {$\chi$};
        \draw[decorate,decoration={brace,amplitude=3pt},thick]
          ([xshift=0.2em,yshift=0em]YD.north west) -- ++(3.5em,0)
          node[midway,above=4pt] {$j$};
        \draw[decorate,decoration={brace,amplitude=3pt},thick]
          ([xshift=4.2em,yshift=0em]YD.north west) -- ++(10.5em,0)
          node[midway,above=4pt] {$k_2$};
        \draw[decorate,decoration={brace,mirror,amplitude=3pt},thick]
          ([xshift=-0em,yshift=-0.5em]YD.north west) -- ++(0,-10em)
          node[midway,left=4pt] {$r_2$};
  \end{tikzpicture}
  \end{equation*}

  \caption{A schematic diagram for the shape of $\chi$ when it is a partition: The $k_2$-index equals $j$. The first $j$ entries of $\chi$ must lie to the right of bold red line and the last $r_2-j$ entries lie left of blue line.}\label{fig:young-diagram-chi} 
\end{figure}
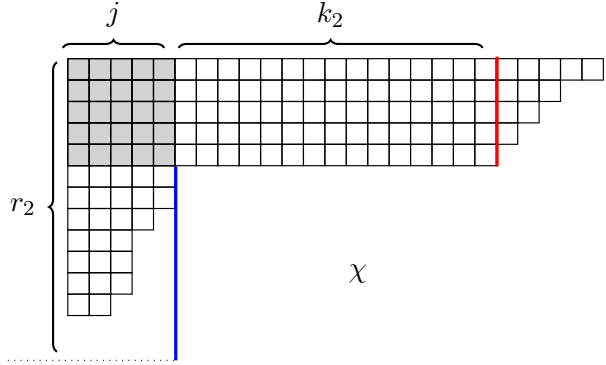

 \end{itemize}
We will apply the properties of Littlewood--Richardson coefficients, listed in Proposition~\ref{prop:LR_coefficients_properties}, to arrive at a contradiction on the existence of the indices $j$ and $i$, which would then show that the conditions above cannot simultaneously be satisfied, whence all the cohomology groups of $\cV_t$ must be zero. We divide this into four exhaustive cases:\
\\
\\
\textbf{Case 1:} Suppose $j > i$. 
By Weyl’s inequality~\eqref{eq:WeylInequality}, the condition \(c_{\alpha,\beta}^{\sigma}\neq 0\) implies
\[
\alpha_j+\beta_1\ge \sigma_j.
\]
Similarly, from \(c_{\sigma,-\rho}^{\chi}\neq 0\) we obtain (recall $-\rho=(\nu,-\lambda)$)
\[
\sigma_j+\nu_1\ge \chi_j \ge j+k_2.
\]
Note that \(\beta_1\le \mu^\dagger_1\le k_1\) (since \(c_{\alpha^\dagger,\beta^\dagger}^{\mu}\neq 0\)) and \(k_2-k_1=n-r\).
Combining the two inequalities above yields,
\[
\alpha_j \ge \sigma_j - \beta_1 \ge j + k_2 - \nu_1 - k_1 \ge j + n-r - \nu_1.
\]
Hence the conjugate of $\alpha$ satisfy
\begin{align}\label{eq:j>alphadagger}
    j\le \alpha^\dagger_{j+(n-r-\nu_{1})}. 
\end{align}
Using $c_{\alpha^\dagger,\beta^\dagger}^{\mu}\ne0$ and the assumption $j>i$, we have
\begin{align}\label{eq:j>i-alphadagger}
    \alpha^\dagger_{j+(n-r-\nu_{1})}\le \mu_{j+(n-r-\nu_{1})}\le \mu_{i+(n-r-\nu_{1})}\le i+\delta^{\dagger}_{(n-r-\nu_{1})}
\end{align}
The assumption $\delta_1+\nu_{1}\le n-r-1$ implies that $\delta^{\dagger}_{(n-r-\nu_{1})}=0$. Hence we obtain $j\le i$ from \eqref{eq:j>alphadagger} and \eqref{eq:j>i-alphadagger}, which is a contradiction.\
\\
\\
\textbf{Case 2:} Suppose $j < i\le 2j$. Applying dominance inequalities in Proposition~\ref{prop:LR_coefficients_properties} for
\[
\sigma \le \alpha + \beta \quad \text{and} \quad \alpha \cup \beta \le \mu^\dagger,
\]
arising from $c_{\alpha,\beta}^\sigma$ and $c_{\alpha,\beta}^{\mu^\dagger}$, respectively, we obtain:
\begin{equation}
        \begin{aligned}[b]
   \sigma_1 + \cdots + \sigma_j 
               &\le \alpha_1 + \cdots + \alpha_j + \beta_1 + \cdots + \beta_j \\
               &\le \mu_1^{\dagger} + \cdots + \mu_{2j}^{\dagger} \\
               &\le ik_1 + (2j - i)i + |\delta|. 
        \end{aligned}
\label{eq:first_dominance}
\end{equation}
The last inequality is evident by analyzing the shape of $\mu$ in Figure~\ref{fig:young-diagram-mu}. Similarly, using the dominance inequalities for 
\[
\mu \le \alpha^\dagger + \beta^\dagger \quad \text{and} \quad \alpha^\dagger \cup \beta^\dagger \le \sigma^\dagger,
\]
and analyzing Figure~\ref{fig:young-diagram-mu} we obtain:
\begin{align}
\begin{aligned}[b]
    i(r_1 + i) - |\gamma| &\le \mu_1 + \cdots + \mu_i \\
                                &\le \alpha^\dagger_1 + \cdots + \alpha^\dagger_i + \beta^\dagger_1 + \cdots + \beta^\dagger_i \\
                                &\le \sigma_1^{\dagger} + \cdots + \sigma_{2i}^{\dagger} 
\end{aligned}\label{eq:second_dominance}
\end{align}
The non-vanishing of generalized Littlewood-Richardson coefficient $c_{\sigma,-\rho}^{\chi}\ne 0$ implies the following two identities:
\begin{align}
    \sigma_1 + \cdots + \sigma_j &\ge \chi_1 + \cdots + \chi_j-|\nu|\label{eq:sigma} \\
    \sigma_1^{\dagger} + \cdots + \sigma_{2i}^{\dagger} &\le\chi_1^{\dagger} + \cdots + \chi_{2i}^{\dagger} +|\lambda| \label{eq:sigmadagger}
\end{align}
The first inequality \eqref{eq:sigma} simply follows from the dominance inequality \eqref{eq:dominance_inequality}, while the second inequality \eqref{eq:sigmadagger} requires an argument, that we provide in Lemma~\ref{lem:LR_dagger}. For the tuple $\chi$, we define $\chi_s^\dagger$ for the number of entries in $\chi$ that are at least $s$.  

Combining the inequalities \eqref{eq:first_dominance} and \eqref{eq:sigma}, and analyzing the shape of $\chi$ in Figure~\ref{fig:young-diagram-chi}, we obtain
\begin{align}\label{eq:chi_ineq}
    j(k_2+j)-|\nu|\le \chi_1 + \cdots + \chi_j-|\nu|\le ik_1 + (2j - i)i + |\delta|.
\end{align}
Similarly, by combining \eqref{eq:second_dominance} and \eqref{eq:sigmadagger} and analyzing Figure~\ref{fig:young-diagram-chi}, we have 
\begin{align}\label{eq:chidagger_ineq}
     i(r_1+ i) - |\gamma|\le\chi_1^{\dagger} + \cdots + \chi_{2i}^{\dagger} +|\lambda|\le  jr_2+(2i-j)j+|\lambda|
\end{align}
These two inequalities \eqref{eq:chi_ineq} and \eqref{eq:chidagger_ineq} can be rearranged into the following useful form:
\begin{align}
    j(k_2 - k_1) &\le (i - j)k_1 - (i - j)^2 + |\delta|+|\nu|,
    \\
    j(r_2-r_1) &\ge (i - j)r_1 + (i - j)^2 - |\gamma|-|\lambda|.\label{eq:j(r_2-r_1)}
\end{align}
Recall that $k_2 - k_1 = n - r$ and $r_2-r_1 = r$. Multiplying the first inequality by $r$ and the second by $n - r$, we get:
\begin{align*}
    r(|\delta|+|\nu|) + (n - r)(|\gamma|+|\lambda|) &\ge (i - j)\left((n - r)r_1 - rk_1\right) + n(i - j)^2 \\
                                &= (nd+rb)(i - j) + n(i - j)^2 \\
                                &\ge nd+rb+n,
\end{align*}
where we used the inequality $i - j \ge 1$ in the final step.
This contradicts the hypothesis of the lemma.
\
\\
\\
\textbf{Case 3:} Suppose $2j < i$. In the previous case, the assumption $i\le 2j$ was used only in \eqref{eq:chi_ineq}, so using our new assumption $2j < i$, we can replace \eqref{eq:chi_ineq} by 
\begin{align}\label{eq:chidagger_2j<i}
    j(k_2+j)-|\nu|\le ik_1,
\end{align}
while \eqref{eq:chidagger_ineq} stays the same. We can rewrite \eqref{eq:j(r_2-r_1)} and \eqref{eq:chidagger_2j<i} as
\begin{align}
    j(n-r)=j(k_2 - k_1) &\le (i - j)k_1 - j^2 +|\nu|,\label{eq:Case3_1}
    \\
    jr=j(r_2-r_1) &\ge (i - j)r_1 + (i - j)^2 - |\gamma|-|\lambda|.\label{eq:Case3_2}
\end{align}
Multiplying \eqref{eq:Case3_1} and \eqref{eq:Case3_2} by $r$ and $n-r$ respectively, we obtain
\begin{align}
\begin{aligned}[b]
    r|\nu| + (n-r)(|\gamma|+|\lambda|) &
    \geq 
    (i-j)((n-r)r_1 - rk_1) + rj^2 + (i-j)^2(n-r) 
    \\ 
    & > (nd +rb)(i-j) + rj^2 + (n-r)(i-j)^2\label{eq:i>2j_ineq}
    \end{aligned}
\end{align}
When $j = 0$, we have $\chi_1 \le 0$. By Littewood-Richardson rule, we have $\nu_1 \le \chi_1$, so $\lvert \nu \rvert = 0$. Let us assume $j > 0$ or $|\delta|>0$, i.e. the hypothesis of part (a). Then using \eqref{eq:i>2j_ineq}, we obtain the inequality
$$r(|\delta|+|\nu|) + (n-r)(|\gamma| + |\lambda|) \geq nd+rb+n,$$
which is a contradiction.

So let us suppose that $j = 0$ and $|\delta| = 0$, i.e. the hypothesis of part (b). Using $j=0$, \eqref{eq:Case3_2} gives us $|\lambda|+|\gamma|\ge ir_1+i^2\ge r_1=rm+d$, so we get a contradiction for $m$ sufficiently large. \
\\
\\
\textbf{Case 4:} Suppose $i = j$. Since $|\mu|=t\ge 1$, $i\ne 0$. We can slightly modify the inequality \eqref{eq:chi_ineq} as follows:
\begin{align*} 
    j(k_2+j-\nu_1)
    & \leq \sigma_1 + \cdots + \sigma_j \\ 
    & \leq \mu_1^{\dagger} + \cdots + \mu_{2j}^{\dagger}  \\  
    & \le ik_1 + i(2j-i+\delta_{1}) .
\end{align*}
 But since $i = j$, we get $\nu_{1} + \delta_{1} \geq n-r+i\ge n-r$, which is a contradiction.
\end{proof}

\begin{lem}\label{lem:LR_dagger}
    Suppose the generalized Littlewood--Richardson coefficient $c_{\sigma,(\nu.-\lambda)}^{\chi}\ne 0$, for $\mathrm{GL}_{r_2}(\bC)$ representations, where $\sigma $ is a partition. Then for any positive integer $s$,
    \[
     \sigma_1^{\dagger} + \cdots + \sigma_{s}^{\dagger} -|\lambda| \le\chi_1^{\dagger} + \cdots + \chi_{s}^{\dagger} ,
    \]
\end{lem}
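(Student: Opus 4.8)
The plan is to translate the single generalized Littlewood--Richardson coefficient $c^{\chi}_{\sigma,(\nu,-\lambda)}$ into a chain of \emph{ordinary} partition containments and then simply count cells.

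\emph{Step 1: reformulate the inequality.} Write $\chi=(\chi^+,-\chi^-)$ with $\chi^+,\chi^-$ partitions; for $i\ge 1$ one has $\chi^\dagger_i=(\chi^+)^\dagger_i$. For a partition $\theta$ and $s\ge 0$ set $f_s(\theta):=\sum_{i>s}\theta^\dagger_i=\sum_j\max(\theta_j-s,0)$, the number of cells of $\theta$ lying outside its first $s$ columns, so that $\theta^\dagger_1+\cdots+\theta^\dagger_s=|\theta|-f_s(\theta)$. Using $|\chi|=|\chi^+|-|\chi^-|$ together with the size identity $|\chi|=|\sigma|+|\nu|-|\lambda|$ (forced by $c^{\chi}_{\sigma,(\nu,-\lambda)}\ne 0$, cf.\ \eqref{eq:LR_coeff_general}), a one-line computation gives
\[
\sum_{i=1}^s\sigma^\dagger_i-\sum_{i=1}^s\chi^\dagger_i
=\bigl(|\lambda|-|\nu|-|\chi^-|\bigr)+\bigl(f_s(\chi^+)-f_s(\sigma)\bigr),
\]
so it suffices to prove $f_s(\chi^+)\le f_s(\sigma)+|\nu|+|\chi^-|$.

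\emph{Step 2: unfold the hypothesis.} I would use twice the standard fact that for partitions $\gamma,\delta$ with $\ell(\gamma)+\ell(\delta)\le r_2$ the irreducible $\mathrm{GL}_{r_2}(\bC)$-module $\bS^{(\gamma,-\delta)}W$ occurs as a summand of $\bS^{\gamma}W\otimes(\bS^{\delta}W)^\vee=\bS^\gamma W\otimes\bS^{-\delta}W$. This can be read off from \eqref{eq:modified_LR_coeff}: the shifted tuples $\delta^\flat:=-\delta+(\delta_1)^{r_2}$ and $\mu^\flat:=(\gamma,-\delta)+(\delta_1)^{r_2}$ are partitions with $\mu^\flat_i-\gamma_i=\delta^\flat_i$ for all $i$, so filling row $i$ of the skew shape $\mu^\flat/\gamma$ entirely with the entry $i$ produces a Littlewood--Richardson tableau, whence $c^{\mu^\flat}_{\gamma,\delta^\flat}=c^{(\gamma,-\delta)}_{\gamma,-\delta}\ge 1$. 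Applying this with $(\gamma,\delta)=(\nu,\lambda)$ (the length hypothesis holds since $(\nu,-\lambda)$ is a highest weight of $\mathrm{GL}_{r_2}$), the summand $\bS^\chi W$ of $\bS^\sigma W\otimes\bS^{(\nu,-\lambda)}W$ is also a summand of $\bS^\sigma W\otimes\bS^\nu W\otimes(\bS^\lambda W)^\vee$; by tensor--Hom adjunction $\Hom\bigl(\bS^\chi W\otimes\bS^\lambda W,\ \bS^\sigma W\otimes\bS^\nu W\bigr)\ne 0$, so these two modules share an irreducible summand $\bS^\tau W$, yielding a partition $\tau$ with $c^\tau_{\sigma,\nu}\ne 0$ and $c^\tau_{\chi,\lambda}\ne 0$. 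Applying the fact again with $(\gamma,\delta)=(\chi^+,\chi^-)$ and repeating the adjunction argument (now for the summand $\bS^\tau W$ of $\bS^{\chi^+}W\otimes\bS^\lambda W\otimes(\bS^{\chi^-}W)^\vee$) produces a partition $\kappa$ with $c^\kappa_{\tau,\chi^-}\ne 0$ and $c^\kappa_{\chi^+,\lambda}\ne 0$.

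\emph{Step 3: count cells.} These nonvanishing coefficients give Young-diagram containments $\sigma\subseteq\tau\subseteq\kappa$ with $|\tau/\sigma|=|\nu|$ and $|\kappa/\tau|=|\chi^-|$, together with $\chi^+\subseteq\kappa$. Since adjoining $m$ cells to a Young diagram never decreases $f_s$ and increases it by at most $m$,
\[
f_s(\chi^+)\le f_s(\kappa)\le f_s(\tau)+|\chi^-|\le f_s(\sigma)+|\nu|+|\chi^-|,
\]
which is exactly the inequality Step 1 reduced to.

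The point requiring care is the ``unfolding'' in Step 2: one must not assert that one tensor product is contained in another (false for dimension reasons), only that they possess a common irreducible constituent, which is precisely what $\Hom\neq 0$ between semisimple modules delivers via tensor--Hom adjunction. Establishing the auxiliary fact about $\bS^{(\gamma,-\delta)}W$ cleanly — for which the explicit row-filled tableau above is the simplest route — is the only other step needing a genuine argument; everything else is bookkeeping with cell counts.
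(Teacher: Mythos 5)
Your argument is correct, but it proves the lemma by a genuinely different route than the paper. The paper makes the same first move (using the size identity $|\sigma|+|\nu|-|\lambda|=|\chi^+|-|\chi^-|$ to trade the stated inequality for a complementary one about columns beyond the $s$-th), but then shifts $(\nu,-\lambda)$ and $\chi$ by $(\ell)^{r_2}$ to turn the generalized coefficient into a classical one and applies Horn's inequality \eqref{eq:horns} to the conjugated coefficient $c_{\sigma^\dagger,\tau^\dagger}^{\phi^\dagger}$, reading off the desired estimate directly. You instead stay inside representation theory: the Cartan-component fact (your row-filled tableau, via \eqref{eq:modified_LR_coeff}, or simply the highest-weight summand of $\bS^{\gamma}W\otimes\bS^{-\delta}W$) realizes $\bS^{(\nu,-\lambda)}W$ inside $\bS^{\nu}W\otimes(\bS^{\lambda}W)^\vee$, and two applications of tensor--Hom adjunction in the semisimple category produce partitions $\sigma\subseteq\tau\subseteq\kappa$ with $|\tau/\sigma|=|\nu|$, $|\kappa/\tau|=|\chi^-|$ and $\chi^+\subseteq\kappa$, after which the conclusion is an elementary count of cells outside the first $s$ columns (your $f_s$ is monotone under containment and $1$-Lipschitz in the number of added cells). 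Your version is more self-contained and avoids Horn's inequalities altogether, which is conceptually pleasant; the paper's version is shorter in context because Horn's inequalities are already set up in Proposition~\ref{prop:LR_coefficients_properties} and used throughout, whereas your route requires the auxiliary Cartan-component lemma and the common-constituent bookkeeping you rightly flag as the delicate point (and which you handle correctly by asserting only a shared irreducible summand, not a containment of tensor products).
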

\begin{proof}
Let $\chi=(\chi_1,\dots,\chi_{r_2})=(\omega,-\theta)$, and note that $\chi^\dagger_j=\omega^\dagger_j$ for all $j>0$. The size constraint implies the equality 
\begin{equation}
\label{eqn: size_constraint_eq}
  |\sigma|+|\nu|-|\lambda|=|\omega|-|\theta|.  
\end{equation}
It suffices to prove the inequality
\[
\sigma^\dagger_{s+1}+\cdots+\sigma^\dagger_M+|\nu|\ge \chi^\dagger_{s+1}+\cdots+\chi^\dagger_{M}\ge \chi^\dagger_{s+1}+\cdots+\chi^\dagger_{M} -|\theta|,
\]
and the the result follows after subtracting it from equation~\eqref{eqn: size_constraint_eq}.

Fix an integer $\ell\ge \max\{\theta_1,\lambda_1\}$, then we have the classical Littlewood-Richardson coefficient $c_{\sigma,\tau}^{\phi}$, where $\tau=(\nu,-\lambda)+(\ell)^{r_2}$ and $\phi=\chi+(\ell)^{r_2}$ are partitions. Apply the Horn's inequality \eqref{eq:horns} coming from $c_{\sigma^\dagger,\tau^\dagger}^{\phi^\dagger}\ne 0$, we obtain
\[
\sigma^\dagger_{s+1}+\cdots+\sigma^\dagger_{M} +\tau^\dagger_{\ell+1}+\cdots+\tau^\dagger_{\ell+M} \ge \phi^\dagger_{\ell+s+1}+\cdots+\phi^\dagger_{\ell+M}
\]
Note that $\tau^\dagger_{\ell+j}=\nu_j^\dagger$ and $\phi^\dagger_{\ell+s+j}=\chi^\dagger_{s+j}$ for all $j>0$. Hence we finish the proof by letting $M$ large enough.
\end{proof}

\subsection{Extension groups}
We will now specialize Theorem~\ref{thm:two_insertions} to calculate the extension groups and prove all the conclusions in Theorem~\ref{thm:Ext_implications}. Note that the cohomology groups of Schur functors of two dual tautological bundles in Theorem~\ref{thm:dual_two_insertion_intro} immediately follow from Theorem~\ref{thm:two_insertions}.

\begin{proposition}\label{prop:Ext_groups_single}
    Assume $V$ splits as a direct sum of line bundles of nonpositive degrees. Let $L$ be a line bundle with degree at least $d+b$. Let $\nu$ and  $\lambda$ be two partitions such that $r\lvert \nu \rvert + (n-r)\lvert \lambda \rvert < nd+rb+n$ and $\nu_1 <n-r$. Then
    \begin{align*}
        \Ext^{i}\left(\bS^\nu L^{[d]}, \bS^\lambda L^{[d]} \right) = \begin{cases}
          \Hom\left(\bS^\nu \cB, \bS^\lambda \cB \right)&i=0\\
        0 &i\ge1.
            \end{cases}
    \end{align*}
        where $\cB$ is the universal quotient bundle on the Grassmannian $\Gr(k,N)$, where $N = \dim H^0(V\otimes L) $ and $k=N-\rank L^{[d]}$.
\end{proposition}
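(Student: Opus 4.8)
\emph{The plan.} I would reduce the statement, summand by summand, to the single‑bundle case of Theorem~\ref{thm:two_insertions}. Since $\bS^\nu L^{[d]}$ is locally free,
\[
\Ext^i\!\bigl(\bS^\nu L^{[d]},\,\bS^\lambda L^{[d]}\bigr)\;=\;H^i\!\bigl(\quot_d,\ \bS^{-\nu}L^{[d]}\otimes\bS^\lambda L^{[d]}\bigr),
\]
and by the vector‑bundle version of \eqref{eq:LR_coeff_general} one has $\bS^{-\nu}L^{[d]}\otimes\bS^\lambda L^{[d]}\cong\bigoplus_\chi\bigl(\bS^\chi L^{[d]}\bigr)^{\oplus c^{\chi}_{-\nu,\lambda}}$, the sum ranging over the finitely many highest weights $\chi$ of length $\rank L^{[d]}$ with $|\chi|=|\lambda|-|\nu|$. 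Write $\chi=(\mu,-\sigma)$ with $\mu,\sigma$ partitions. The idea is to feed each summand $\bS^\chi L^{[d]}$ into Theorem~\ref{thm:two_insertions}, taking $L=L_m$ with $m=\deg L$ and the Schur functor on the other consecutive‑degree bundle trivial.

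Before doing so I would check the hypotheses of Theorem~\ref{thm:two_insertions} for every $\chi$ that occurs. From $c^{\chi}_{-\nu,\lambda}\ne0$ and the shift identity \eqref{eq:modified_LR_coeff} (shifting $-\nu$ and $\chi$ by a common large constant, leaving $\lambda$ unchanged), the resulting classical Littlewood--Richardson coefficient forces $\chi_i\ge(-\nu)_i$ for all $i$; read off on the negative tail this gives $\sigma_j\le\nu_j$ for every $j$, hence $\sigma_1\le\nu_1<n-r$ and $|\sigma|\le|\nu|$. Since $|\mu|=|\sigma|+|\lambda|-|\nu|$, the hypothesis $r|\nu|+(n-r)|\lambda|<nd+rb+n$ yields
\[
(n-r)|\mu|+r|\sigma|\;=\;n|\sigma|+(n-r)|\lambda|-(n-r)|\nu|\;<\;nd+rb+n+n\bigl(|\sigma|-|\nu|\bigr)\;\le\;nd+rb+n .
\]
Thus Theorem~\ref{thm:two_insertions} applies to each $\bS^\chi L^{[d]}$: if $\sigma\ne\emptyset$, part~(i) gives $H^\bullet(\quot_d,\bS^\chi L^{[d]})=0$; if $\sigma=\emptyset$, so $\chi=\mu$, part~(ii) gives $H^\bullet(\quot_d,\bS^\mu L^{[d]})\cong\bS^\mu H^\bullet(V\otimes L)$, concentrated in degree $0$ because $\deg L\ge d+b$ makes every summand $\cO(m-b_i)$ of $V\otimes L$ have vanishing $H^1$. (Part~(ii) is stated for ``$m$ sufficiently large''; that restriction is lifted in the next section, so with $\deg L\ge d+b$ one invokes the improved form.) Summing over $\chi$, only partition‑valued $\chi=\mu$ contribute: $\Ext^i=0$ for $i\ge1$, while
\[
\Ext^0\!\bigl(\bS^\nu L^{[d]},\,\bS^\lambda L^{[d]}\bigr)\;\cong\;\bigoplus_{\mu}\bigl(\bS^\mu H^0(V\otimes L)\bigr)^{\oplus c^{\mu}_{-\nu,\lambda}} .
\]

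Finally I would identify this $\Ext^0$ with $\Hom(\bS^\nu\cB,\bS^\lambda\cB)$ on $\Gr(k,N)$, $N=\dim H^0(V\otimes L)$. Expanding the same way, $\Hom(\bS^\nu\cB,\bS^\lambda\cB)=H^0\bigl(\Gr(k,N),\bS^{-\nu}\cB\otimes\bS^\lambda\cB\bigr)=\bigoplus_\chi c^{\chi}_{-\nu,\lambda}\,H^0(\Gr(k,N),\bS^\chi\cB)$, and by Theorem~\ref{thm:BWB} (cf.\ Remark~\ref{rem:BWB_remark}) $H^0(\Gr(k,N),\bS^\chi\cB)$ equals $\bS^\chi(\bC^N)$ when $\chi$ is a partition and vanishes otherwise (for non‑partition $\chi$ the sorting vector $\omega$ of Theorem~\ref{thm:BWB} either repeats or already fails to decrease between its $k$‑th and $(k{+}1)$‑st entries). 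As $\dim H^0(V\otimes L)=N$, this matches the display above termwise; the identification is natural via the Str\o mme embedding of Section~\ref{sec:Stromme_embedding}, under which $L^{[d]}\cong\iota^*\cB$, the evaluation $H^0(V\otimes L)\otimes\cO_{\quot_d}\to L^{[d]}$ is the pullback of the tautological surjection onto $\cB$, and on each partition summand the comparison map $\bS^\mu(\bC^N)=H^0(\Gr(k,N),\bS^\mu\cB)\to H^0(\quot_d,\bS^\mu L^{[d]})$ is exactly the isomorphism furnished by Theorem~\ref{thm:two_insertions}(ii). The main obstacle has already been discharged inside Theorem~\ref{thm:two_insertions}; what remains delicate here is the bookkeeping of the first two paragraphs --- in particular squeezing the \emph{strict} size inequality above out of $|\sigma|\le|\nu|$ --- together with checking that the last‑step comparison is the genuine Str\o mme‑pullback map and not merely a coincidence of dimensions.
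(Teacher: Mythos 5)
Your proposal is correct and follows essentially the same route as the paper: decompose $\bS^{-\nu}L^{[d]}\otimes\bS^{\lambda}L^{[d]}$ into irreducible summands $\bS^{\chi}L^{[d]}$ with multiplicities given by (generalized) Littlewood--Richardson coefficients, use Horn-type containment to verify the hypotheses of Theorem~\ref{thm:two_insertions} for every summand, and identify the surviving $H^0$ with $\Hom\bigl(\bS^\nu\cB,\bS^\lambda\cB\bigr)$ via Borel--Weil--Bott on $\Gr(k,N)$. The differences are cosmetic: the paper bounds $|\delta|\le|\lambda|$ and $|\gamma|\le|\nu|$ directly rather than via your size-identity manipulation, and your explicit appeal to the strengthened statement of the following section for the pure-partition summands (where Theorem~\ref{thm:two_insertions}(ii) carries the ``$m$ sufficiently large'' caveat) is a legitimate, non-circular fix of a point the paper's own proof passes over silently.
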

\begin{proof}
We follow the notation in the proof of Theorem \ref{thm:two_insertions}. We may express  
\[
\bS^{\nu}(L^{[d]})^\vee \otimes \bS^{\lambda} L^{[d]} 
= \bigoplus_{\eta} \left(\bS^{\eta}(L^{[d]})^{\vee}\right)^{\oplus c_{\nu,-\lambda}^{\eta}},
\]
where $\eta=(\eta_1,\eta_2,\dots,\eta_{N_1-k_1})=(\gamma,-\delta)$ runs over tuples 
of non-increasing integers, and $c_{\nu,-\lambda}^{\eta}$ are the modified 
Littlewood--Richardson coefficients \eqref{eq:modified_LR_coeff}. Horn’s inequalities imply that  
\[
\eta_j \le \nu_j + (-\lambda)_1 \le \nu_j \quad \text{for all } j \ge 1,
\]  
hence $|\gamma|\le |\nu| $ and $\eta_1 \le \nu_1 < n-r$. Similarly, we obtain $|\delta|\le |\lambda|$, and thus 
Theorem~\ref{thm:two_insertions} implies that all higher cohomology groups vanish and the zeroth cohomology group is expressed in terms of the cohomology groups of $\cB_1$ as stated.
\end{proof}
\begin{proof}[Proof of Theorem~\ref{thm:Ext_implications}]
Proposition~\ref{prop:Ext_groups_single} immediately implies part~(i), and a straightforward application of the Borel--Weil--Bott theorem yields part~(ii). The statement about the exceptional collection follows by noting that $\lambda_1 < n-r$ and $m \ge b+d$ imply $\lambda_1 \le k_1$, and by invoking \cite{Kapranov} to observe that
\[
\{\bS^{\lambda}\cB_1 \;:\; |\lambda|\le d,\ \lambda_1 < n-r\}
\]
is contained in a full exceptional collection for $\Gr(k_1, N_1)$.
\end{proof}

\subsection{Cohomological degree of the tautological bundles}
In Section~\ref{sec:Kozsul_resolution}, we showed that, except for the space of global sections of 
$\bS^{\eta} L_{m-1}^{[d]} \otimes \bS^{\rho} L_{m}^{[d]}$, all cohomology groups vanish, 
and we explicitly computed $H^{0}$ under size constraints on $\eta$ and $\rho$. 
Here we remove the constraints on $\eta$ and $\rho$ and establish the vanishing of the higher cohomology 
groups, strengthening \cite[Theorem~1.3]{Lin2025}.
\begin{proposition}\label{prop:cohomological_degree_no_size_constraints}
    Assume $V$ splits as a direct sum of line bundles of nonpositive degrees. Let $\eta = (\gamma, -\delta)$, $\rho = (\lambda, -\nu)$. Then
    for all  $m \geq d+b$,
    \[
    H^{D}(\quot_d,\bS^{\eta} L_{m-1}^{[d]} \otimes \bS^{\rho} L_{m}^{[d]}) = 0 \quad \text{for all } D>|\delta|+|\nu|.
    \]
    In particular, when $\eta = \gamma$ and $\rho = \lambda$ are partitions, 
    \[
    H^{D}(\quot_d,\bS^{\gamma} L_{m-1}^{[d]} \otimes \bS^{\lambda} L_{m}^{[d]}) = 0\quad \text{for all }D>0.
    \]
\end{proposition}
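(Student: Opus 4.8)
\subsection*{Proof proposal for Proposition~\ref{prop:cohomological_degree_no_size_constraints}}

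The plan is to run the same machinery as in the proof of Theorem~\ref{thm:two_insertions}, but, since the size constraints on $\eta,\rho$ have been dropped, to settle for a bound on the cohomological degree of each term of the Koszul resolution rather than for full acyclicity of the higher terms. Fix the Str\o mme embedding $\iota\colon\quot_d\hookrightarrow\Gr(k_1,N_1)\times\Gr(k_2,N_2)$ attached to $m$, and recall the finite resolution \eqref{eq:resolution_single_taut}, whose $t$-th term is $\cV_t=\left(\bS^{\eta}\cB_1\boxtimes\bS^{\rho}\cB_2\right)\otimes\bigwedge^{t}\cK^{\vee}$. The hypercohomology spectral sequence associated to this resolution has first page $E_1^{-t,q}=H^{q}\!\left(\Gr(k_1,N_1)\times\Gr(k_2,N_2),\,\cV_t\right)$ and converges to $H^{q-t}\!\left(\quot_d,\,\bS^{\eta}L_{m-1}^{[d]}\otimes\bS^{\rho}L_{m}^{[d]}\right)$. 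Hence it suffices to show that $H^{q}(\cV_t)=0$ whenever $q>t+|\delta|+|\nu|$, for every $t\ge0$.

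Next I would decompose $\cV_t$ exactly as in the proof of Lemma~\ref{lem:Vanishing_V^t}: every direct summand has the form $\left(\bS^{\mu}\cA_1\otimes\bS^{\eta}\cB_1\right)\boxtimes\bS^{\chi}\cB_2^{\vee}$ with $|\mu|=t$ and $c_{\alpha,\beta}^{\mu^{\dagger}}\,c_{\alpha,\beta}^{\sigma}\,c_{\sigma,-\rho}^{\chi}\ne0$ for some partitions $\alpha,\beta,\sigma$, where $-\rho=(\nu,-\lambda)$. By the K\"unneth formula together with Theorem~\ref{thm:BWB}, such a summand has cohomology concentrated in a single degree $D_1+D_2$, and this group is nonzero only if $\mu$ has a well-defined $(r_1;\eta)$-index $i$ and $\chi$ has a well-defined $k_2$-index $j$; in that case Lemma~\ref{lem:BWB_index} gives $D_2=k_2 j$, while Lemma~\ref{lem:BWB_index_multi}(ii) gives $D_1\le|\delta|+(\mu_1+\cdots+\mu_i)-i^{2}$. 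It therefore suffices to prove the inequality $D_1+D_2\le|\delta|+|\nu|+|\mu|$.

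To bound the $D_2$-term I would chain the dominance inequalities of Proposition~\ref{prop:LR_coefficients_properties}. Since $\chi_1\ge\cdots\ge\chi_j\ge j+k_2$, we have $\chi_1+\cdots+\chi_j\ge j^{2}+jk_2$; Dominance~I applied to $c_{\sigma,-\rho}^{\chi}\ne0$ gives $\chi_1+\cdots+\chi_j\le(\sigma_1+\cdots+\sigma_j)+|\nu|$; Dominance~I applied to $c_{\alpha,\beta}^{\sigma}\ne0$ gives $\sigma_1+\cdots+\sigma_j\le(\alpha_1+\cdots+\alpha_j)+(\beta_1+\cdots+\beta_j)$; and Dominance~II applied to $c_{\alpha,\beta}^{\mu^{\dagger}}\ne0$ gives $(\alpha_1+\cdots+\alpha_j)+(\beta_1+\cdots+\beta_j)\le\mu^{\dagger}_1+\cdots+\mu^{\dagger}_{2j}$. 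Putting these together yields $D_2=k_2 j\le\mu^{\dagger}_1+\cdots+\mu^{\dagger}_{2j}+|\nu|-j^{2}$, so that $D_1+D_2\le|\delta|+|\nu|+(\mu_1+\cdots+\mu_i)+(\mu^{\dagger}_1+\cdots+\mu^{\dagger}_{2j})-i^{2}-j^{2}$. The desired bound thus reduces to the purely combinatorial inequality
\[
\mu^{\dagger}_1+\cdots+\mu^{\dagger}_{2j}\;\le\;i^{2}+j^{2}+\sum_{\ell>i}\mu_\ell,
\]
which I would prove by writing $\mu^{\dagger}_1+\cdots+\mu^{\dagger}_{2j}=\sum_{\ell\ge1}\min(\mu_\ell,2j)$, estimating the first $i$ summands by $2ij\le i^{2}+j^{2}$ and the remaining summands by $\sum_{\ell>i}\mu_\ell$. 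The ``in particular'' assertion is then the special case $\delta=\nu=\emptyset$.

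The main obstacle here is bookkeeping rather than any new idea: one must attribute each cohomology class of $\cV_t$ to the correct single degree $D_1+D_2$, verify that the hypotheses of Lemmas~\ref{lem:BWB_index} and~\ref{lem:BWB_index_multi} are genuinely met, and check that the degenerate cases $i=0$, $j=0$, and $\mu=\emptyset$ cause no trouble (in each of them the relevant bound collapses to something weaker than needed). The combinatorial heart of the argument — the arithmetic-mean--geometric-mean estimate marrying the ``row'' bound of Lemma~\ref{lem:BWB_index_multi} to the ``column'' bound produced by the $k_2$-index of $\chi$ — is short once the setup is in place.
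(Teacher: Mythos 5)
Your proposal is correct and takes essentially the same route as the paper: reduce via the Koszul resolution \eqref{eq:resolution_single_taut} to showing $H^{q}(\cV_t)=0$ for $q>t+|\delta|+|\nu|$, and then bound the unique Borel--Weil--Bott degree of each summand by combining Lemma~\ref{lem:BWB_index}, Lemma~\ref{lem:BWB_index_multi}(ii), and exactly the chain of dominance inequalities \eqref{eq:first_dominance}, \eqref{eq:sigma}, \eqref{eq:chi_ineq}. The only (harmless, in fact slightly cleaner) deviation is your final combinatorial check $\mu^\dagger_1+\cdots+\mu^\dagger_{2j}=\sum_{\ell}\min(\mu_\ell,2j)\le 2ij+\sum_{\ell>i}\mu_\ell\le i^2+j^2+\sum_{\ell>i}\mu_\ell$, which is valid for arbitrary $\mu,i,j$ and replaces the paper's case split on $2j\le i+p$ versus $2j>i+p$.
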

\begin{proof}
  First, recall the Koszul resolution \eqref{eq:resolution_single_taut}, and note that it suffices to show
\[
H^D\!\big(\Gr(k_1,n_1)\times \Gr(k_2,n_2),\cV_t\big)=0 \quad \text{for all } t\ge0,\text{ and} \ D> t+|\delta|+|\nu|.
\]
Indeed, assume there exist partitions $\mu,\alpha,\beta,\sigma,\chi$ such that
\[
H^D\!\Big(\Gr(k_1,n_1)\times \Gr(k_2,n_2),
\big(\bS^\mu \cA_1 \otimes \bS^\eta \cB_1 \big)\boxtimes \big(\bS^\chi \cB_2^\vee\big)\Big)\neq 0,
\]
with $c_{\alpha,\beta}^{\mu^\dagger}\cdot c_{\alpha,\beta}^{\sigma}\cdot c_{\sigma,-\rho}^{\chi}\neq 0$. Recall the indices $i$ and $j$ from the proof of Lemma~\ref{lem:Vanishing_V^t}. Analyzing the non-vanishing of the cohomology groups and the Littlewood–Richardson coefficients,
in particular, the inequalities \eqref{eq:first_dominance}, \eqref{eq:sigma}, and \eqref{eq:chi_ineq}, implies
\[
j(k_2+j)-|\nu| \;\le\; \mu_1^{\dagger}+\cdots+\mu_{2j}^{\dagger}.
\]
By Lemma~\ref{lem:BWB_index_multi}(ii) the cohomological degree satisfies
\[
D \le k_2 j + |\delta|+ (\mu_1+\dots+\mu_i)-i^2= k_2 j + |\delta|+ i p  + \sum_{s\ge i+p+1}\mu_s^\dagger,
\]
where $k_2 j$ is the cohomological degree contributed by $\bS^\chi \cB_2^\vee$. Adding the two inequalities above yields
\[
|\delta|+|\nu|+|\mu|
\;\ge\;
D+\begin{cases}
j^2-ip+\displaystyle\sum_{s=2j+1}^{i+p}\mu_s^\dagger,& \text{if } 2j\le i+p,\\[0.4em]
j^2-ip,& \text{if } 2j> i+p.
\end{cases}
\]
In both cases on the right, the expression is nonnegative, which is precisely what we need.
Indeed, if $2j> i+p$, then $j^2-ip\ge (i+p)^2/4 - ip\ge 0$, and if $2j\le i+p$, then
\[
j^2-ip+\sum_{s=2j+1}^{i+p}\mu_s^\dagger
\;\ge\;
j^2-ip+i(i+p-2j)
=
(i-j)^2
\;\ge\; 0.
\]

\end{proof}

\section{Multiple line bundles and vector bundles}
In the previous section, we calculated the cohomology groups of $\bS^{\eta}L_{m-1}^{[d]} \otimes \bS^{\rho} L_m^{[d]}$ for sufficiently positive $m$. We required $m$ to be large enough so that these tautological bundles are pulled back along a Str\o mme embedding. In order to access those degrees inaccessible through a Str\o mme embedding, we work with Schur complexes, introduced in Subsection \ref{sec:Schur_complex} below. This will allow us to bootstrap Theorem \ref{thm:two_insertions} to prove Theorem~\ref{thm:intro_vb_insertion} in full generality. 
\subsection{Schur functor of perfect complexes}\label{sec:Schur_complex}
For a partition $\lambda$ of $n$, let $\pi_\lambda$ be the irreducible representation of $S_n$
corresponding to $\lambda$. Then Schur functors can be defined for a perfect complex $A$ of arbitrary length as $\bS^{\lambda}(A) = (\pi_\lambda \otimes A^{\otimes n})^{S_n}$ (see for instance Page 302 between Theorems 9.11.4 and 9.11.5 in \cite{etingof2017tensor}). From this definition, it is also clear that in characteristic zero, Schur functors respect quasi-isomorphisms: taking tensor products respect quasi-isomorphisms, and taking $S_n$ invariants is exact in characteristic zero. 

When we write $\bS^{\lambda} M^{[d]}$ for $M^{[d]}$ is an object in the derived category of the Quot scheme, we mean to represent $M^{[d]}$ first as a perfect complex and take the Schur functor. In our case, we can actually always represent $M^{[d]}$ and $M^{\{d\}}$ as a map of vector bundles. Below we recall definitions and basic properties of Schur functors associated to a partition and a map of vector bundles. The construction in this case is called Schur complexes in the literature. See \cite[Chapter 2]{weyman} for details.

Let $X$ be a locally Noetherian scheme and let $$\Phi: E_1 \to E_2$$ be a map of vector bundles on $X$.  

\begin{definition}\label{def:Schur_complex}
Let $\lambda$ be a partition. We write  $\bS^{\lambda}(\Phi)^{\bullet}$ for the Schur complex of length $|\lambda|$, so that 
\begin{equation*}
    \bS^{\lambda}(\Phi)^{q} = \bigoplus_{\nu \subseteq \lambda, |\nu| = q} \bS^{\lambda/\nu} E_1 \otimes \bS^{\nu^{\dagger}} E_2. 
\end{equation*}
is placed in $q$-th degree.
The beginning of $\bS^{\lambda}(\Phi)^{\bullet}$ looks like
\[
\bS^{\lambda}E_1 \to \bS^{\lambda / (1)} E_1 \otimes E_2 \to 
\bS^{\lambda/(2)} E_1 \otimes \wedge^2 E_2 
\oplus
\bS^{\lambda/(1^2)} E_1 \otimes \Sym^2 E_2 
\to \cdots 
\]
Sometimes it is useful to index homologically as well. 
We write $\bS^{\lambda}(\Phi)_{\bullet}$ for the Schur complex of length $|\lambda|$ associated to the map $\Phi$, so that
\begin{equation*}
    \bS^{\lambda}(\Phi)_{q} = \bigoplus_{\nu \subseteq \lambda, |\nu| = q} \bS^{\nu^{\dagger}} E_1 \otimes \bS^{\lambda/\nu} E_2. 
\end{equation*}
is placed in $(-q)$-th degree.
The beginning of $\bS^{\lambda}(\Phi)_{\bullet}$ looks like
\[
\cdots \to 
\Sym^2 E_1 \otimes \bS^{\lambda / (1^2)} E_2 
\oplus 
\wedge^2 E_1 \otimes \bS^{\lambda / (2)} E_2
\to E_1 \otimes \bS^{\lambda / (1)} E_2 \to \bS^{\lambda} E_2.
\]
Note that $\bS^{\lambda}(\Phi)_{\bullet} = \bS^{\lambda^{\dagger}}(\Phi)^{\bullet}[\ell]$, where $\ell= |\lambda|$.
\end{definition}

\begin{proposition} \label{prop:SchurExact}
    If we have a short exact sequence
    \begin{equation}
        0 \to E_1 \xrightarrow{\Psi} E_2 \xrightarrow{\Phi} E_3 \to 0, \label{eq:SES_prop_SchurComplex}        
    \end{equation}
    where each $E_i$ is a vector bundle,
    then $\bS^{\lambda}(\Psi)_{\bullet} \to \bS^{\lambda} E_3 \to 0$ is exact, and $0 \to \bS^{\lambda}E_1 \to \bS^{\lambda}(\Phi)^{\bullet}$ is exact.
\end{proposition}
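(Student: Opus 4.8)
The plan is to reduce both assertions to the case of a \emph{split} short exact sequence and then invoke the standard structure theory of Schur complexes. First, exactness of a bounded complex of locally free sheaves on $X$ can be tested stalkwise, and over the local ring $\mathcal{O}_{X,x}$ the finite free module $(E_3)_x$ is projective, so the sequence \eqref{eq:SES_prop_SchurComplex} splits after localizing at $x$. Since the formation of $\bS^\lambda(\Psi)_\bullet$ and $\bS^\lambda(\Phi)^\bullet$ commutes with localization, it suffices to treat the split case $E_2 = E_1 \oplus E_3$ with $\Psi$, $\Phi$ the canonical inclusion and projection; there the relevant maps $\bS^\lambda E_1 \to \bS^\lambda E_2$ and $\bS^\lambda E_2 \to \bS^\lambda E_3$ of Definition~\ref{def:Schur_complex} are the evident ones.

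In the split case the strategy is to decompose each term of $\bS^\lambda(\Phi)^\bullet$ via \eqref{eq: decompDirectSum} (and its skew analogue): each $\bS^{\lambda/\nu}(E_1 \oplus E_3)$ breaks into summands carrying a fixed number of tensor factors from $E_1$. The differential of $\bS^\lambda(\Phi)^\bullet$ is induced by $\Phi$, which annihilates the summand $E_1 \subseteq E_2$, so it only rearranges factors among the $E_3$-directions and therefore preserves this $E_1$-grading. This should yield a decomposition of complexes $\bS^\lambda(\Phi)^\bullet \cong \bigoplus_{\rho \subseteq \lambda} \bS^\rho E_1 \otimes C_\bullet(\rho)$, where $C_\bullet(\lambda) = \mathcal{O}_X$ sits in degree $0$ and, for $\rho \subsetneq \lambda$, the complex $C_\bullet(\rho)$ is a direct sum, with Littlewood--Richardson multiplicities $c^\lambda_{\rho,\tau}$, of Schur complexes $\bS^\tau(\mathrm{id}_{E_3})^\bullet$ with $|\tau| = |\lambda| - |\rho| \ge 1$. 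The Schur complex of an isomorphism attached to a nonempty partition is split exact — this generalizes the exactness of the Koszul complex of an identity map; see \cite[Chapter~2]{weyman} — so only the $\rho = \lambda$ summand survives, proving that $0 \to \bS^\lambda E_1 \to \bS^\lambda(\Phi)^\bullet$ is exact. The assertion for $\bS^\lambda(\Psi)_\bullet$ is entirely parallel: one now grades by the number of factors coming from $E_3$, the differential (built from the inclusion $\Psi$) preserves this grading, the surviving piece is $\bS^\lambda E_3$ in homological degree $0$, and all other pieces are split-exact Schur complexes of $\mathrm{id}_{E_1}$, giving exactness of $\bS^\lambda(\Psi)_\bullet \to \bS^\lambda E_3 \to 0$.

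The local reduction and the acyclicity of the Schur complex of an isomorphism are routine. The main obstacle is the bookkeeping in the second step: checking that the termwise splitting coming from \eqref{eq: decompDirectSum} is compatible with the Schur-complex differential so that it assembles into the displayed direct-sum decomposition, and pinning down the factors $C_\bullet(\rho)$ precisely. All of this is classical and is essentially contained in \cite[Chapter~2]{weyman}; in the final write-up we will cite it and record only the brief verification needed in the split case.
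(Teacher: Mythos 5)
Your proposal is correct, and it reaches the statement by a mildly different route than the paper, so a comparison is worthwhile. Both arguments start identically: since $E_3$ is locally free the sequence \eqref{eq:SES_prop_SchurComplex} is locally split, and exactness of the globally defined augmented complexes can be checked locally, so one may assume $E_2 = E_1 \oplus E_3$. At that point the paper simply quotes \cite[Corollary~V.1.15]{KaanBuchsbaumWyman}, which already asserts that $\bS^{\lambda}(\Psi)_{\bullet}$ is acyclic with cokernel $\bS^{\lambda}E_3$, adds a functoriality remark to glue the local identifications of the cokernel, and then deduces the second statement from the first by dualizing (apply the first statement to $0 \to E_3^{\vee} \to E_2^{\vee} \to E_1^{\vee} \to 0$ and dualize the resulting complex). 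You instead unwind the split case by hand: grade by the number of $E_1$- (resp.\ $E_3$-) tensor factors, note that the differential preserves this grading because $\Phi$ annihilates $E_1$ (resp.\ because $\Psi$ maps into the $E_1$-summand), and reduce to the split-exactness of Schur complexes of identity maps, treating the two statements symmetrically rather than via duality. The inputs you defer --- the decomposition (in general only a filtration) of the Schur complex of a direct sum of maps with Littlewood--Richardson multiplicities, and acyclicity of $\bS^{\tau}(\mathrm{id})$ for $\tau \neq \emptyset$ --- are indeed in \cite[Chapter~2]{weyman}, and even the filtration version suffices since a finite filtration whose graded pieces are exact except for one isolates the cohomology; so the deferred bookkeeping is genuinely routine, the only caveat being that the clean direct-sum form you state uses characteristic zero, which the paper assumes throughout. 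A small advantage of your formulation is that, because you test exactness of the augmented complexes (with their natural, globally defined augmentation maps) stalk by stalk, the gluing step that the paper handles via functoriality of Schur complexes becomes automatic; the paper's proof, in exchange, is shorter because it cites the resolution statement as a black box.
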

\begin{proof}
    Since $E_3$ is locally free, the short exact sequence \eqref{eq:SES_prop_SchurComplex} is locally split. Acyclicity can be checked locally, so \cite[Corollary V.1.15]{KaanBuchsbaumWyman} implies that $(\bS^\lambda \Psi)_{\bullet}$ is acyclic. The same result also shows that the cokernel locally agrees with $\bS^{\lambda} E_3$, and the functoriality of Schur complexes shows that the isomorphisms must glue, so, globally, the cokernel is also exactly $\bS^{\lambda} E_3$.

    The first statement applied to the dual of \eqref{eq:SES_prop_SchurComplex} shows that the complex $S^{\lambda}(\Phi^{\vee}: E_3^{\vee} \to E_2^{\vee}) \to S^{\lambda} E_1^{\vee} \to 0$ is exact. Dualizing this complex shows the second statement. 
\end{proof} 

\begin{rem}
In the theorem statements, such as Theorem \ref{thm:intro_vb_insertion}, we sometimes write $\bS^{\lambda}H^\bullet(V \otimes M)$, where $V, M$ are vector bundles on $\bP^1$. The definition of the Schur functor is rather trivial in this case, but requires caution. For instance, $\bS^{\lambda}H^{\bullet}(\cO(-k))$, where $k > 1$, is the vector space $\bS^{\lambda^{\dagger}} H^1(\cO(-k))$, concentrated in degree $|\lambda|$. 
\end{rem}

\subsection{Vanishing results for two insertions}
In Theorem~\ref{thm:two_insertions}, we computed the cohomology groups of Schur functors of $L_{m-1}^{[d]}$ and $L_{m}^{[d]}$. We shall now use the properties of Schur complex to conclude results about vector bundles $L_{m-1}^{\{d\}}$ and $L_{m}^{\{d\}}$ and their Schur functors.
\begin{lemma}\label{lem:two_L^{m}}
Let  $L_{m-1}$ and $L_{m}$ line bundles with $m$ sufficiently large. Let $\mu,\nu,\gamma,\lambda$ be partitions satisfying $$|\mu|+|\nu|+|\gamma|+|\lambda|<(nd+rb+n)/(n-r).$$ Then all cohomology groups of the vector bundles 
    \[
    \bS^{\mu}L_{m-1}^{\{d\}}\otimes \bS^{\nu}L_{m}^{\{d\}} \otimes \bS^{\gamma}L_{m-1}^{[d]}\otimes \bS^{\lambda}L_{m}^{[d]}
    \]
    vanish unless $\mu=\nu=\emptyset$.
\end{lemma}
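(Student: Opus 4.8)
The plan is to resolve the factors $\bS^{\mu}L_{m-1}^{\{d\}}$ and $\bS^{\nu}L_{m}^{\{d\}}$ by Schur complexes of the evaluation maps, thereby reducing everything to tautological bundles of the form $\bS^{\eta}L_{m-1}^{[d]}\otimes\bS^{\rho}L_{m}^{[d]}$ to which Theorem~\ref{thm:two_insertions}(ii) applies, and then to check that the resulting complex of global sections is acyclic. By the symmetry between $L_{m-1}$ and $L_m$ I would assume $\mu\neq\emptyset$.

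First I would recall from \eqref{eq:pushFwdTwistedUniversalSeq} that for $m\ge b+d$ there are short exact sequences of vector bundles $0\to L_{m-1}^{\{d\}}\to W_1\xrightarrow{\psi_1}L_{m-1}^{[d]}\to 0$ and $0\to L_{m}^{\{d\}}\to W_2\xrightarrow{\psi_2}L_{m}^{[d]}\to 0$, where $W_i$ is the trivial bundle with fibre $H^0(V\otimes L_{m-i})$ and $\psi_i$ is the evaluation map $R\pi_*p^*(V\otimes L_{m-i})\to L_{m-i}^{[d]}$. By Proposition~\ref{prop:SchurExact} the Schur complexes $\bS^{\mu}(\psi_1)^{\bullet}$ and $\bS^{\nu}(\psi_2)^{\bullet}$ are finite locally free resolutions of $\bS^{\mu}L_{m-1}^{\{d\}}$ and $\bS^{\nu}L_{m}^{\{d\}}$, so the bounded complex $\mathcal C^{\bullet}:=\operatorname{Tot}\bigl(\bS^{\mu}(\psi_1)^{\bullet}\otimes\bS^{\nu}(\psi_2)^{\bullet}\bigr)\otimes\bS^{\gamma}L_{m-1}^{[d]}\otimes\bS^{\lambda}L_{m}^{[d]}$ is quasi-isomorphic to the vector bundle in the statement; it suffices to prove $\mathbb{H}^{\bullet}(\quot_d,\mathcal C^{\bullet})=0$.

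Next I would unwind the terms of $\mathcal C^{\bullet}$. By Definition~\ref{def:Schur_complex}, and since the skew Schur functors $\bS^{\mu/\sigma}(W_i)$ of the trivial bundles $W_i$ are again trivial, every term of $\mathcal C^{\bullet}$ is a direct sum of bundles $U\otimes\bS^{\sigma^{\dagger}}L_{m-1}^{[d]}\otimes\bS^{\gamma}L_{m-1}^{[d]}\otimes\bS^{\tau^{\dagger}}L_{m}^{[d]}\otimes\bS^{\lambda}L_{m}^{[d]}$ with $U$ a fixed vector space, $\sigma\subseteq\mu$ and $\tau\subseteq\nu$; merging the $L_{m-1}^{[d]}$-factors and the $L_{m}^{[d]}$-factors by the Littlewood--Richardson rule, each term becomes a direct sum of trivial twists of $\bS^{\eta}L_{m-1}^{[d]}\otimes\bS^{\rho}L_{m}^{[d]}$ with $\eta,\rho$ partitions and $|\eta|+|\rho|\le|\mu|+|\nu|+|\gamma|+|\lambda|<(nd+rb+n)/(n-r)$. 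Theorem~\ref{thm:two_insertions}(ii) then shows that each such bundle has cohomology concentrated in degree $0$; hence in the hypercohomology spectral sequence $E_1^{p,q}=H^q(\quot_d,\mathcal C^{p})\Rightarrow\mathbb{H}^{p+q}(\quot_d,\mathcal C^{\bullet})$ one has $E_1^{p,q}=0$ for $q\ge 1$, the sequence degenerates, and $\mathbb{H}^{\bullet}(\quot_d,\mathcal C^{\bullet})=H^{\bullet}\!\bigl(H^0(\quot_d,\mathcal C^{\bullet})\bigr)$.

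Finally I would compute the complex of global sections. Because every sheaf appearing in $\mathcal C^{\bullet}$ has cohomology concentrated in degree $0$ and the $W_i$-factors are trivial, $H^0$ commutes with the tensor products and with the Schur-complex differentials (which are assembled from $\psi_1$, $\psi_2$ and identity maps), and evaluating sections through Theorem~\ref{thm:two_insertions}(ii) identifies $H^0(\quot_d,\mathcal C^{\bullet})$ with $\operatorname{Tot}\bigl(\bS^{\mu}(\alpha)^{\bullet}\otimes\bS^{\nu}(\beta)^{\bullet}\bigr)\otimes\bS^{\gamma}H^0(V\otimes L_{m-1})\otimes\bS^{\lambda}H^0(V\otimes L_{m})$, where $\alpha:=H^0(\psi_1)$ and $\beta:=H^0(\psi_2)$. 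These maps are the natural isomorphisms $H^0(V\otimes L_{m-i})\xrightarrow{\sim}H^0(\quot_d,L_{m-i}^{[d]})$ of Theorem~\ref{thm:two_insertions}(ii), so $\ker\alpha=0$ and $\ker\beta=0$; since $\mu\neq\emptyset$, Proposition~\ref{prop:SchurExact} forces $\bS^{\mu}(\alpha)^{\bullet}$ to be an acyclic complex, whence $H^0(\quot_d,\mathcal C^{\bullet})$ is acyclic and $\mathbb{H}^{\bullet}(\quot_d,\mathcal C^{\bullet})=0$. I expect the main obstacle to be precisely this last identification of $H^0(\quot_d,\mathcal C^{\bullet})$ as a Schur complex of an isomorphism: it relies on every sheaf in the resolution having cohomology in a single degree — the output of Theorem~\ref{thm:two_insertions}(ii) — which is what makes $H^0$ sufficiently exact and monoidal on the bundles at hand to commute with forming Schur complexes.
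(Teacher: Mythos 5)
Your proposal is correct and follows essentially the same route as the paper: resolve the $\{d\}$-factors by Schur complexes of the evaluation maps, kill all higher cohomology of every term via Theorem~\ref{thm:two_insertions}(ii) after a Littlewood--Richardson merge, and conclude because the complex of global sections is identified (through the naturality of the $H^0$-isomorphism, which you rightly flag as the crux) with a Schur complex of isomorphisms, hence acyclic once $\mu\neq\emptyset$. The only difference is organizational: you resolve both $\bS^{\mu}L_{m-1}^{\{d\}}$ and $\bS^{\nu}L_{m}^{\{d\}}$ simultaneously and finish with a K\"unneth-type observation, whereas the paper argues in stages, first handling a single nontrivial partition via the comparison with the trivialized Schur complex and then using that vanishing to dispose of the case where both $\mu$ and $\nu$ are nontrivial.
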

\begin{proof}
    Let $m$ be sufficiently large such that part (ii) of Theorem~\ref{thm:two_insertions} holds. Now consider the exact sequence of vector bundles
    \[
    \begin{tikzcd}
    0 \arrow[r] &L_{m}^{\{d\}} \arrow[r] & W_m \arrow[r,"\Phi_m"] & L_m^{[d]} \arrow[r] & 0 
    \end{tikzcd}
    \]
    where $W_{m}:= \pi_*(p^*(V\otimes L_m))$. 
    
    We will first prove the case $\mu=\emptyset$ and $\nu\ne \emptyset$. Applying the Schur functor $\bS^\nu$ to the above, then tensoring with the vector bundles $F:=\bS^{\gamma}L_{m-1}^{[d]}\otimes \bS^{\lambda}L_{m}^{[d]} $ we obtain a complex
    \[
    \begin{tikzcd}
    0 \arrow[r] &F\otimes \bS^{\nu}L_{m}^{\{d\}} \arrow[r] &F\otimes \bS^{\nu}(\Phi_m)^\bullet 
    \end{tikzcd}
    \]
    which is exact by applying Proposition~\ref{prop:SchurExact}. Therefore, the cohomology groups of the vector bundle $F\otimes \bS^{\nu}L_{m}^{\{d\}}$ equals the hypercohomology groups of the complex $F\otimes \bS^{\nu}(\Phi_m)^\bullet$. Now consider the hypercohomolgy spectral sequence
    \begin{equation}\label{eq:spectral_sequence}
          E_{1}^{p,q}\;\Rightarrow\;H^{p+q}(\quot_d,F\otimes \bS^{\nu}(\Phi_m)^\bullet)
    \end{equation}
    whose first page is given by
    \[
    E_{1}^{p,q} :=H^q\left(\quot_d, F\otimes \bS^{\nu}(\Phi_m)^p\right).
    \]
    By the definition of Schur complex, we note that $F\otimes \bS^{\nu}(\Phi_m)^p$ splits as a direct sum with summands of the form $F\otimes \bS^{\alpha}{W_{m}}\otimes \bS^{\beta}L_m^{[d]}$, such that $|\alpha|+|\beta|=|\nu| $ and $|\beta|=p$. Note that ${W_{m}}$ is a trivial bundle. Applying Littlewood-Richardson rule, we further split $F\otimes \bS^{\alpha}{W_{m}}\otimes \bS^{\beta}L_m^{[d]}$ into direct sum, with summands of form 
    \begin{equation}\label{eq:summand_m-1,m}
        \bS^{\gamma}L_{m-1}^{[d]}\otimes \bS^{\eta}L_{m}^{[d]}
    \end{equation}
     where  $$|\gamma|+|\eta|= |\gamma|+|\lambda|+|\beta|\le|\gamma|+|\lambda|+|\nu| <(nd+rb+n)/(n-r).$$ Theorem~\ref{thm:two_insertions} implies that $q$-th cohomology groups of each of these summand vanish for all $q\ge1$. Therefore, $H^q(\quot_d,F\otimes \bS^{\alpha}{W_{m}}\otimes \bS^{\beta}L_m^{[d]})=0$, and thus $E_1^{p,q}=0$ for all $q\ge1$. We will now show that the first row corresponding to $q=0$ 
    \begin{equation}\label{eq:spectral_Es,0}
    0\to E_1^{0,0}\to E_{1}^{1,0}\to \cdots\to E_1^{|\nu|,0}\to 0
    \end{equation}
    in page one is exact. Consider the diagram
    \[
    \begin{tikzcd}
    W_m \arrow[r, "id"] \arrow[d] & W_m  \arrow[d]  \\
    W_m \arrow[r, "\Phi_m"] & L_m^{[d]} 
    \end{tikzcd}
    \]
    and the induced morphisms of complexes
 \[   \begin{tikzcd}
     \Tilde{F}\otimes \bS^{\nu}(id)^\bullet\arrow[r]&F\otimes \bS^{\nu}(id)^\bullet \arrow[r]& F\otimes \bS^{\nu}(\Phi_m)^\bullet .
    \end{tikzcd}
    \]
    where $\Tilde{F}=\bS^{\gamma}W_{m-1}\otimes \bS^{\lambda}W_{m}$.
Theorem~\ref{thm:two_insertions} implies that the induced morphism for each summand \eqref{eq:summand_m-1,m} in the $p$-th degree
\[
\bS^{\gamma}W_{m-1}\otimes \bS^{\eta }{W_{m}}\to \bS^{\gamma}L_{m-1}^{[d]}\otimes \bS^{\eta}L_m^{[d]}
\]
induces isomorphisms for the space of global sections, and hence induces isomorphisms  
\[
H^0(\quot_d,\Tilde{F}\otimes \bS^{\nu}(id)^p)\cong H^0(\quot_d,F\otimes \bS^{\nu}(\Phi_m)^p)=E_1^{p,0}.
\]
Note that  the complex of trivial bundles $\Tilde{F}\otimes \bS^{\nu}(id)^\bullet$ (equivalently of their space of global sections) is exact by Proposition~\ref{prop:SchurExact}, and hence \eqref{eq:spectral_Es,0} is exact. Therefore the spectral sequence \eqref{eq:spectral_sequence} degenetates in page two with all the terms zero, so we conclude that 
$$
 H^{D}(\quot_d,\bS^{\nu}L_{m}^{\{d\}} \otimes \bS^{\gamma}L_{m-1}^{[d]}\otimes \bS^{\lambda}L_{m}^{[d]}) \cong H^{D}(\quot_d,F\otimes \bS^{\nu}(\Phi_m)^\bullet) =0 $$ 
 for all $D\ge 0$. The same proof works when $\nu=\emptyset$ and $\mu\ne \emptyset$, and it gives us 
 \begin{equation}\label{eq:nu=zero _vanish} 
 H^{D}(\quot_d,\bS^{\mu}L_{m-1}^{\{d\}} \otimes \bS^{\gamma}L_{m-1}^{[d]}\otimes \bS^{\lambda}L_{m}^{[d]}) =0 \quad \text{for all }D\ge0.
\end{equation}

When both $\mu$ and $\nu$ are non-trivial, we again consider the exact sequence, 
 \[
    \begin{tikzcd}
    0 \arrow[r] &F\otimes \bS^{\nu}L_{m}^{\{d\}} \arrow[r] &F\otimes \bS^{\nu}(\Phi_m)^\bullet 
    \end{tikzcd}
    \]
    where $F:=\bS^{\mu}L_{m-1}^{\{d\}}\otimes\bS^{\gamma}L_{m-1}^{[d]}\otimes \bS^{\lambda}L_{m}^{[d]} $. Running the same argument as above, we note that, for each $p$, the bundle $F\otimes \bS^{\nu}(\Phi_m)^p$ splits as a direct sum with summands of the form 
    $$\bS^{\mu}L_{m-1}^{\{d\}}\otimes \bS^{\gamma}L_{m-1}^{[d]}\otimes \bS^{\eta}L_{m}^{[d]} $$
    for partitions $\eta$ with $|\eta|\le |\lambda|+|\nu|$.
    Since $\mu$ is nontrivial, all cohomology of the above summands vanish by \eqref{eq:nu=zero _vanish}, and hence
    $$H^{q}(\quot_d,F\otimes \bS^{\nu}(\Phi_m)^p)=0$$ 
    for all $p$ and $q$. This implies that each object in the first page of the corresponding hypercohomology spectral sequence is zero, and thus $H^{D}(\quot_d,F\otimes \bS^{\nu}L_{m}^{\{d\}})=0$ for all $D\ge0$.
\end{proof}
We now prove Theorem \ref{cor:vanishing_E_x} about the vanishing of the cohomology of $\cS_x$, restriction of the universal subsheaf $\cS$ to $\quot_d\times \{x\}$, using Lemma \ref{lem:two_L^{m}}.
\begin{proof}[Proof of Theorem~\ref{cor:vanishing_E_x}]
Let $L_{m}=\cO(m)$ and $L_{m-1}=\cO(m-1)$ for the same choice of $m$ as in part (ii) of Theorem~~\ref{thm:two_insertions} and Lemma~\ref{lem:two_L^{m}}. For a point $x\in \bP^1$, the short exact sequence
\[
0\to L_{m-1}\to L_m\to \cO_x\to 0
\]
induces the short exact sequence involving tautological bundles
 \[
    \begin{tikzcd}
    0 \arrow[r] &L_{m-1}^{\{d\}} \arrow[r,"\Psi"] &L_{m}^{\{d\}}\arrow[r] &\cE_x \arrow[r]&0.
    \end{tikzcd}
    \]
on $\quot_d$.
Proposition~\ref{prop:SchurExact} provides a resolution
\[\begin{tikzcd}
    \bS^{\lambda}(\Psi)_{\bullet} \arrow[r]& \bS^{\lambda} \cE_x \arrow[r] & 0
\end{tikzcd}\]
for a partition $\lambda$. When $0\ne|\lambda|<(nd+rb+n)/(n-r)$, Lemma~\ref{lem:two_L^{m}} applies to each term in the resolution, and hence proving that all cohomology groups of  $\bS^{\lambda} \cE_x$ vanish. The statement involving multiple partitions is a simple consequence of Littlewood-Richardson rule.
\end{proof}

\subsection{Proof of Theorem~\ref{thm:intro_vb_insertion}}
As noted in Remark~\ref{rem:vb_to_multi_line}, every vector bundle on \(\bP^1\) splits as a direct sum of line bundles, so Theorem~\ref{thm:intro_vb_insertion} follows from the following statement concerning multiple line bundles.
\begin{theorem}\label{thm:multiple_sub_quot}
Let $K_1,K_2,\dots K_s$ and $M_1,M_2,\dots,M_{t}$ be line bundles on $\bP^1$. For any tuples of partitions $\mu^1,\mu^2,\dots,\mu^{s}$ and $\lambda^1, \lambda^2,\ldots, \lambda^t$ satisfying
$$ \lvert \mu^1\rvert+\cdots+|\mu^s|+|\lambda^1|+\cdots+|\lambda^t| < (nd+rb+n)/(n-r),$$
\begin{itemize}[leftmargin=2em]
    \item[(a)] If $\mu^i$ is a nontrivial partition for some $1\le i\le s$, then 
    \begin{align*}
         H^\bullet\left(\quot_d, \bigotimes_{i=1}^{s} \bS^{\mu^i} K_i^{\{d\}}\otimes \bigotimes_{j=1}^{t}\bS^{\lambda^j} M_j^{[d]} \right) =  
        0.
    \end{align*} 
    \item[(b)] If $\mu^1=\cdots=\mu^{s}=\emptyset$, then 
\[
H^\bullet\left(\quot_d, \bigotimes_{j=1}^{t} \bS^{\lambda^j} M_j^{[d]}\right)\cong \bigotimes_{j=1}^{t}\bS^{\lambda^j}H^\bullet(V\otimes M_j).
\]
\end{itemize}
\end{theorem}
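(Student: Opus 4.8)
The plan is to bootstrap from Theorem~\ref{thm:two_insertions} and Lemma~\ref{lem:two_L^{m}} --- which already treat the bundles attached to two line bundles $L_{m-1},L_m$ of consecutive degrees with $m$ large --- to arbitrary line bundles of arbitrary degrees, by resolving on $\bP^1$. First I would fix an auxiliary integer $m$ large enough that both Theorem~\ref{thm:two_insertions}(ii) and Lemma~\ref{lem:two_L^{m}} apply for the pair $L_{m-1},L_m$; in particular $m\ge b+d$, so that $L_{m-1}^{\{d\}},L_m^{\{d\}},L_{m-1}^{[d]},L_m^{[d]}$ are all vector bundles. Since $(\cO_{\bP^1}(m-1),\cO_{\bP^1}(m))$ is a full strong exceptional collection on $\bP^1$, every line bundle $\cO_{\bP^1}(a)$ is isomorphic in $\mathbf{D}^b(\bP^1)$ to a bounded complex $P_a^\bullet$ whose terms are finite direct sums of $\cO_{\bP^1}(m-1)$ and $\cO_{\bP^1}(m)$. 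Pulling back to $\quot_d\times\bP^1$, tensoring with the vector bundle $\cE$ (resp.\ with $\cF$, which stays a quasi-isomorphism because the terms of $P_a^\bullet$ are locally free), and applying $R\pi_*$ (whose higher direct images vanish on these twists since $m\ge b+d$), I would obtain, for each $i$, a bounded complex of vector bundles representing $K_i^{\{d\}}$ whose terms are direct sums of $L_{m-1}^{\{d\}}$ and $L_m^{\{d\}}$, and, for each $j$, a bounded complex representing $M_j^{[d]}$ whose terms are direct sums of $L_{m-1}^{[d]}$ and $L_m^{[d]}$.

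Next I would apply Schur complexes (Section~\ref{sec:Schur_complex}) together with the Cauchy formula \eqref{eq:cauchyFormula}, the Littlewood--Richardson rule \eqref{eq:LRrule}, and the decomposition of Schur functors of direct sums, repeatedly, using that Schur functors of perfect complexes respect quasi-isomorphisms in characteristic zero. This expresses $\bigotimes_{i}\bS^{\mu^i}K_i^{\{d\}}\otimes\bigotimes_{j}\bS^{\lambda^j}M_j^{[d]}$, in $\mathbf{D}^b(\quot_d)$, as a bounded complex $\mathcal{Q}^\bullet$ of vector bundles whose every term is a finite direct sum of bundles of the form
\[
\bS^{\mu}L_{m-1}^{\{d\}}\otimes\bS^{\nu}L_m^{\{d\}}\otimes\bS^{\gamma}L_{m-1}^{[d]}\otimes\bS^{\lambda}L_m^{[d]},
\]
with $|\mu|+|\nu|=|\mu^1|+\cdots+|\mu^s|$ and $|\gamma|+|\lambda|=|\lambda^1|+\cdots+|\lambda^t|$, so that the hypothesis of the theorem forces $|\mu|+|\nu|+|\gamma|+|\lambda|<(nd+rb+n)/(n-r)$ in every summand. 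I would then feed $\mathcal{Q}^\bullet$ into the hypercohomology spectral sequence $E_1^{p,q}=H^q(\quot_d,\mathcal{Q}^p)\Rightarrow H^{p+q}\bigl(\quot_d,\bigotimes_i\bS^{\mu^i}K_i^{\{d\}}\otimes\bigotimes_j\bS^{\lambda^j}M_j^{[d]}\bigr)$.

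For part~(a): if some $\mu^i\neq\emptyset$ then every summand above has $|\mu|+|\nu|\ge|\mu^i|>0$, so $\mu$ or $\nu$ is nontrivial, and Lemma~\ref{lem:two_L^{m}} kills all cohomology of each summand; hence $E_1=0$ and the claimed vanishing follows. For part~(b) there are no subsheaf factors, so each summand of $\mathcal{Q}^p$ is $\bS^{\gamma}L_{m-1}^{[d]}\otimes\bS^{\lambda}L_m^{[d]}$ with $\gamma,\lambda$ partitions of total size $<(nd+rb+n)/(n-r)$; Theorem~\ref{thm:two_insertions}(ii) then gives $E_1^{p,q}=0$ for $q\neq0$ and identifies $E_1^{p,0}=H^0(\quot_d,\mathcal{Q}^p)$ with the matching direct sum of $\bS^{\gamma}H^0(V\otimes L_{m-1})\otimes\bS^{\lambda}H^0(V\otimes L_m)$, naturally in the differentials by the geometric construction in the remark after Theorem~\ref{thm:intro_vb_insertion}. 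The spectral sequence degenerates, so $H^\bullet(\quot_d,\bigotimes_j\bS^{\lambda^j}M_j^{[d]})$ is the cohomology of the complex $H^0(\quot_d,\mathcal{Q}^\bullet)$, which by this naturality is obtained by applying the same Schur-complex and Littlewood--Richardson operations that produced $\mathcal{Q}^\bullet$, but starting from the vector spaces $H^0(V\otimes L_{m-1})$ and $H^0(V\otimes L_m)$ in place of $L_{m-1}^{[d]},L_m^{[d]}$. Since $R\Gamma(\bP^1,V\otimes\cO_{\bP^1}(a))=R\Gamma(\bP^1,V\otimes P_a^\bullet)$ is precisely the two-term complex $[\,H^0(V\otimes L_{m-1})^{\oplus}\to H^0(V\otimes L_m)^{\oplus}\,]$ (as $H^1(V\otimes L_{m-1})=H^1(V\otimes L_m)=0$), this identifies the answer with $\bigotimes_{j}\bS^{\lambda^j}H^\bullet(V\otimes M_j)$.

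I expect the main obstacle to be bookkeeping rather than any new geometric input: one must verify that resolving the $\cO_{\bP^1}(a)$'s, pushing forward, and then unwinding the Schur functors of the resulting complexes via Schur complexes and iterated Littlewood--Richardson preserves the total partition size exactly, so that the size hypothesis is the precise condition needed to invoke Lemma~\ref{lem:two_L^{m}} and Theorem~\ref{thm:two_insertions}(ii) termwise; and one must check that the identifications in part~(b) are natural enough to assemble along the differentials of $\mathcal{Q}^\bullet$. As a byproduct, taking $s=0$, $t=2$, $M_1=L_{m_0-1}$, $M_2=L_{m_0}$ with an auxiliary $m\gg m_0$ removes the ``$m$ sufficiently large'' hypothesis from Theorem~\ref{thm:two_insertions}(ii) for every $m_0\ge b+d$, as promised in the text.
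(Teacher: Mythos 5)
Your proposal is correct, and for part (a) it is essentially the paper's own argument: fix $m\gg 0$, replace each $K_i^{\{d\}}$ and $M_j^{[d]}$ by a two-term complex built out of $L_{m-1}^{\{d\}},L_m^{\{d\}}$ (resp.\ $L_{m-1}^{[d]},L_m^{[d]}$) tensored with fixed vector spaces --- the paper produces these from the diagonal sequence on $\quot_d\times\bP^1$ rather than from a Beilinson-type resolution on $\bP^1$, but the resulting complexes are the same --- then take Schur complexes, check that total partition sizes are preserved, and kill every $E_1$-term with Lemma~\ref{lem:two_L^{m}}. Where you genuinely diverge is part (b). You stay with the $L_{m-1}/L_m$ resolution, and must therefore identify the whole $q=0$ row of the spectral sequence, i.e.\ show that $H^0(\quot_d,\mathcal{Q}^\bullet)$ is, \emph{as a complex}, the tensor product of the Schur complexes of the two-term complexes computing $R\Gamma(V\otimes M_j)$; this is the naturality point you flag, and it does require an argument, but it can be discharged: the evaluation maps $\pi_*p^*(V\otimes L_{m-1})\to L_{m-1}^{[d]}$ and $\pi_*p^*(V\otimes L_m)\to L_m^{[d]}$ commute with the differentials of your resolutions (both are pushforwards of maps that already commute on $\quot_d\times\bP^1$), so functoriality of Schur complexes gives a map of complexes from the trivial-bundle model to $\mathcal{Q}^\bullet$, and Theorem~\ref{thm:two_insertions}(ii) --- whose isomorphism is induced by exactly these evaluation maps, a fact the paper itself exploits in the proof of Lemma~\ref{lem:two_L^{m}} --- shows this map is a termwise $H^0$-isomorphism, whence quasi-isomorphism invariance of Schur functors in characteristic zero yields $\bigotimes_j\bS^{\lambda^j}H^\bullet(V\otimes M_j)$. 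The paper instead proves (b) by passing to the exact triangle $M_j^{\{d\}}\to H^\bullet(V\otimes M_j)\otimes\cO_{\quot_d}\to M_j^{[d]}$, taking its Schur complex, and invoking part (a) to annihilate every summand containing a nontrivial Schur functor of the subsheaf-type complex, so that only the constant summand survives and no identification of differentials is needed beyond $H^\bullet(\quot_d,\cO_{\quot_d})=\bC$; the cost is a case division on $\deg M_j$, with an extra combination of triangles in the intermediate range $0\le\deg M_j<d+b$. Your route buys a uniform treatment of all degrees (and, as you note, removes the ``$m$ sufficiently large'' hypothesis of Theorem~\ref{thm:two_insertions}(ii) as a byproduct), at the price of the naturality bookkeeping; the paper's route is softer, using only the vanishing statement (a) rather than the geometric description of the $H^0$-isomorphism along every differential.
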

\begin{rem}
    The universal quotient $p^*V \to \cQ$ induces natural maps $H^{\bullet}(V \otimes M_j) \to H^{\bullet}(M_j^{[d]})$ for each $j$. The proof below will additionally show that the isomorphism in part (b) is given by taking Schur functors and tensoring these natural maps.
\end{rem}
\begin{proof}[Proof of Theorem \ref{thm:multiple_sub_quot}]
    Fix sufficiently large integer $m$ such that Theorem~\ref{thm:two_insertions}(ii) and Lemma~\ref{lem:two_L^{m}} hold for Schur functors of consecutive line bundles $L_{m-1}=\cO(m-1)$ and $L_{m}=\cO(m)$. The key idea here is to represent each $K_i^{\{d\}}$ and $M_j^{[d]}$ as two-term complexes involving only $L_{m-1}^{[d]}$ and $L_m^{[d]}$, so that we may analyze Schur functors of the latter instead.
    
    Note that $R^1\pi_*(\cF\otimes p^{*}L_{m-1}) = 0$, where $\pi$ and $p$ are the projections maps \eqref{eq:universalSeq} for $\quot_d\times \bP^1$. The diagonal sequence for $\bP^1$ gives an exact sequence  \cite[Proposition 1.1]{Stromme}
      \[
    \begin{tikzcd}
    0 \arrow[r] & \pi^*L_{m-1}^{[d]}\otimes p^*\cO(-1) \arrow[r]         & \pi^*L_{m}^{[d]} \arrow[r]         & \cF\otimes p^{*}L_{m} \arrow[r]         & 0
    \end{tikzcd}
    \]
    on $\quot_d\times \bP^1$. For any line bundle $M$, twist the above sequence by $p^*(M\otimes L_m^{\vee} )$, and push forward via $\pi$ to obtain an exact triangle 
   \begin{equation}\label{eq:M^[d]_resol_Lm}
       \begin{tikzcd}
 L_{m-1}^{[d]} \otimes R\pi_*p^*(M\otimes L_{m-1}^\vee) \arrow[r, "\Phi_{M}"]         & {L_m^{[d]} \otimes R\pi_*p^*(M\otimes L_{m}^{\vee})})   \arrow[r] &      M^{[d]} \arrow[r,"+1"] & \
    \end{tikzcd}
   \end{equation}
    in the derived category of coherent sheaves on $\quot_d$. Explicitly, $R\pi_*p^*(M\otimes L_{m}^{\vee}) = H^\bullet(M\otimes L_m^\vee) \otimes \cO_{\quot_d}$
    is a trivial vector bundle supported in degree $0$ or $1$; the complex is identically zero if $\deg M \otimes L_m^\vee = -1$. This exhibits $M^{[d]}$ as quasi-isomorphic to a two-term complex of vector bundles, denoted $\Phi_M$, which is
    \[
    \Phi_M = \begin{cases}
        L_{m-1}^{[d]} \otimes H^0(M \otimes L_{m-1}^{\vee}) \to 
        L_{m}^{[d]} \otimes H^0(M \otimes L_{m}^{\vee}) & \text{if }
        \deg(M\otimes L_m^{\vee}) \geq 0, \\ 
        L_{m-1}^{[d]} \otimes H^1(M \otimes L_{m-1}^{\vee}) \to 
        L_{m}^{[d]} \otimes H^1(M \otimes L_{m}^{\vee}) & \text{if } \deg(M\otimes L_m^{\vee}) \leq -1,
    \end{cases}
    \]
    where the first complex $(\Phi_M)_{\bullet}$ is viewed as concentrated in degrees $[-1,0]$, and the second $(\Phi_M)^{\bullet}$ is concentrated in degrees $[0,1]$.
    Similarly, replacing $\cF$ by $\cE$ in the above argument, we obtain the exact triangle
    \begin{equation}\label{eq:res_K_L_m_sub}
          \begin{tikzcd}
 L_{m-1}^{\{d\}} \otimes R\pi_*p^*(K\otimes L_{m-1}^\vee) \arrow[r, "\Psi_K"]         & {L_m^{\{d\}} \otimes R\pi_*p^*(K\otimes L_{m}^{\vee})})   \arrow[r] &      K^{\{d\}} \arrow[r,"+1"] & \
    \end{tikzcd}
     \end{equation}
     for any line bundle $K$.
     
     Thus, our problem is reduced to computing the hypercohomology groups of the complex 
     \[
        G^\bullet:= \bigotimes_{i=1}^{s} \bS^{\mu^i}(\Psi_{K_i})^\bullet \otimes \bigotimes_{j=1}^{t}\bS^{\lambda^j} (\Phi_{M_j})^\bullet.
     \]
    Note that we write $\bS^{\mu^i} (\Psi_{K_i})^\bullet$, $\bS^{\lambda^j} (\Phi_{M_j})^\bullet$ instead of $\bS^{\mu^i} (\Psi_{K_i})_\bullet$, $\bS^{\lambda^j} (\Phi_{M_j})_\bullet$ for all $i, j$ for ease of presentation, but apply the corresponding construction in Definition \ref{def:Schur_complex} depending on whether $\Psi_{K_i}$, $\Phi_{M_j}$ are concentrated in degrees $[0,1]$ or $[-1,0]$. Now the degree $q$ term $G^q$ in the complex $G^{\bullet}$ splits as direct sum of vector bundles of the form
     \[
     \bS^{\alpha}L_{m-1}^{\{d\}}\otimes \bS^{\beta}L_{m}^{\{d\}} \otimes \bS^{\gamma}L_{m-1}^{[d]}\otimes \bS^{\delta}L_{m}^{[d]}
     \]
     for some partitions $\alpha,\beta,\gamma,\delta$ satisfying $$|\alpha|+|\beta|=\lvert \mu^1\rvert+\cdots+|\mu^s|\quad\text{and}\quad |\gamma|+|\delta|=|\lambda^1|+\cdots+|\lambda^t|. $$ If $\mu^i$ is nontrivial for some $i$, then either $\alpha$ or $\beta$ is nontrivial, and thus, all its cohomology groups vanish by Lemma~\ref{lem:two_L^{m}}. Therefore, the first page of the hypercohomology spectral sequence for $G^\bullet$ is identically zero, so we get $H^D(\quot_d,G^\bullet)=0$ for all $D\ge 0$. This concludes the proof of part (a).
     
     To prove part (b), we will use the exact triangle 
   \begin{equation}
   \label{eqn: m_j_sub_quot}
    \begin{tikzcd}
M_{j}^{\{d\}} \arrow[r,"\Theta_{M_j}"] & R\pi_*(p^*(V\otimes M_j)) \arrow[r,] & M_j^{[d]} \arrow[r,"+1"] & \
    \end{tikzcd}   
   \end{equation}
where $R\pi_*(p^*(V\otimes M_j))\cong H^{\bullet}(V\otimes M_j)\otimes \cO_{\quot_d}$. Let us first observe that, when $\deg M_j \ge d+b$, it is a vector bundle supported in degree zero, and whenever $\deg M_j < 0$, the complex $M_j^{\{d\}}$ is vector bundle $R^1\pi_*(p^*M_j\otimes \cS )$ supported in degree one. 

When $\deg M_j\ge d+b$, we  apply Proposition~\ref{prop:SchurExact} to see that $\bS^{\lambda^j}M_j^{[d]}$ is quasi-isomorphic to $\bS^{\lambda^j} (\Theta_{M_j})_\bullet$, supported in degrees $[-|\lambda^j|,0] $, with $(-q)$-th term given by
\[
\bS^{\lambda^j} (\Theta_{M_j})_q= \bigoplus_{\alpha\subset\lambda^j,|\alpha|=s}\bS^{\alpha^\dagger} M_j^{\{d\}}\otimes\bS^{\lambda^j/\alpha}\left(H^0(V\otimes M_j)\otimes \cO_{\quot_d} \right)
\]
On the other hand, when $\deg M_j<0$, we see that $\bS^{\lambda^j} M_j^{[d]}$ is quasi-isomorphic to $\bS^{\lambda^j} (\Theta_{M_j})^\bullet$, supported in degrees $[0,|\lambda^j|]$, with $q$-th term given by
\[
\bS^{\lambda^j} (\Theta_{M_j})^q= \bigoplus_{\alpha\subset\lambda^j,|\alpha|=s}\bS^{\lambda^j/\alpha} (M_j^{\{d\}}[1])\otimes\bS^{\alpha^\dagger}\left(H^1(V\otimes M_j)\otimes \cO_{\quot_d} \right),
\]
where $M_j^{\{d\}}[1]=R^1\pi_*(p^*M_j\otimes \cS)$.
Now consider the complex
\[
E^\bullet=\bigotimes_{j=1}^{t}\bS^{\lambda^j} (\Theta_{M_j})^\bullet,
\]
where  we use cohomology notation for ease of presentation again. Observe that all cohomology groups of each summand of each term $E^q$ vanish by part(a), except for the $D$-th term, with $D=\sum_{\deg M_j < 0}|\lambda^j|$, which contains the summand 
\begin{equation}\label{eq:D-th_term_complex}
    \bigotimes_{ \deg M_j < 0} \bS^{(\lambda^j)^{\dagger}} \left(H^1(V\otimes M_j)\otimes \cO_{\quot_d}\right)
        \otimes 
        \bigotimes_{\deg M_j \geq 0 } \bS^{\lambda^j} \left(H^0(V \otimes M_j)\otimes \cO_{\quot_d}\right).
\end{equation}
Therefore, the hypercohomology spectral sequence degenerates on page two, and gives us
\[
H^D(\quot_d,E^\bullet) = \bigotimes_{ \deg M_j < 0} \bS^{(\lambda^j)^{\dagger}} H^1(V\otimes M_j)
        \otimes 
        \bigotimes_{\deg M_j \geq 0 } \bS^{\lambda^j} H^0(V \otimes M_j).
\]
for $D=\sum_{\deg M_j < 0}|\lambda^j|$, while all other cohomology groups vanish. Note that in computing $H^D$ from \eqref{eq:D-th_term_complex}, we use the fact that $\cO_{\quot_d}$ has no higher cohomology groups, which is a corollary of Theorem \ref{thm:two_insertions}, as  explained in Remark \ref{rem:structure_sheaf}.

Now assume $0\le \deg M_j\le d+b-1\le m-1$. In this case $M_j^{\{d\}}$ is supported in degrees $[0,1]$, and is quasi-isomorphic to $ (\Psi_{M_j})^\bullet$ using the triangle~\eqref{eq:res_K_L_m_sub}. The latter complex is supported in degrees $[0,1]$ and is given by 
\begin{equation*}
          \begin{tikzcd}
 L_{m-1}^{\{d\}} \otimes H^1(M_j\otimes L_{m-1}^\vee) \arrow[r, "\Psi_{M_j}"]         & {L_m^{\{d\}} \otimes H^1(M_j\otimes L_{m}^{\vee})}) .
    \end{tikzcd}
     \end{equation*}
Thus, using the triangle~\eqref{eqn: m_j_sub_quot}, we get a quasi-isomorphism of $M_j^{[d]}$ with the complex of vector bundle concentrated in degrees $[-1,0]$ given by 
     \[
          \begin{tikzcd}
 L_{m-1}^{\{d\}} \otimes H^1(M_j\otimes L_{m-1}^\vee) \arrow[r,]         & {L_m^{\{d\}} \otimes H^1(M_j\otimes L_{m}^{\vee})})\oplus H^{0}(V\otimes M_j)\otimes \cO_{\quot_d},
    \end{tikzcd}
     \]
     and argue similarly.
\end{proof}

\subsection{Vanishing without size constraints}
We now turn to the setting when there is no restrictions on the sizes of partitions. We will extend the result for two consecutive line bundles in Proposition~\ref{prop:cohomological_degree_no_size_constraints} to showing vanishing of higher cohomology groups for multiple line bundle.
\begin{theorem} \label{prop: onlyH0-intro}
     Assume $V$ splits as a direct sum of line bundles of nonpositive degrees.
     Let $M_1, M_2, \ldots, M_t$ be line bundles on $\bP^1$ of degrees at least $ d+b$. For any tuple of partitions $\lambda^1, \lambda^2, \ldots, \lambda^t$,  we have
    \[
     H^i \left(\Quot,\bigotimes_{j = 1}^t \bS^{\lambda^j}M_j^{[d]}\right) = 0\quad \text{for all } i\ge1. 
    \]
\end{theorem}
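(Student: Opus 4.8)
The plan is to adapt the argument of Theorem~\ref{thm:multiple_sub_quot}(b) in the regime $\deg M_j\ge d+b$, replacing its size‑constrained inputs (Theorem~\ref{thm:two_insertions}, Lemma~\ref{lem:two_L^{m}}) by the size‑free Proposition~\ref{prop:cohomological_degree_no_size_constraints} and, for the final step, by a direct Koszul computation in the style of Lemma~\ref{lem:Vanishing_V^t}. Fix $m$ large (say $m>\deg M_j$ for all $j$, so in particular $m\ge d+b$). Since $\deg M_j\ge d+b$, each $M_j^{\{d\}}$ is locally free and fits in a short exact sequence $0\to M_j^{\{d\}}\to H^0(V\otimes M_j)\otimes\cO_{\quot_d}\to M_j^{[d]}\to 0$ with trivial middle term; hence by Proposition~\ref{prop:SchurExact} the complex $\bigotimes_j\bS^{\lambda^j}M_j^{[d]}$ admits a resolution whose degree‑$0$ term is the trivial bundle $\bigotimes_j\bS^{\lambda^j}\!\big(H^0(V\otimes M_j)\big)\otimes\cO_{\quot_d}$ and whose remaining terms are direct sums of $\bigotimes_j\bS^{\beta^j}M_j^{\{d\}}\otimes(\text{trivial})$ with not all $\beta^j$ empty. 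Running the hypercohomology spectral sequence of this resolution and using $H^{\ge 1}(\quot_d,\cO_{\quot_d})=0$ (Remark~\ref{rem:structure_sheaf}) reduces the theorem to the statement $(\star)$: $H^\bullet\!\big(\quot_d,\bigotimes_j\bS^{\beta^j}M_j^{\{d\}}\big)=0$ in every degree whenever some $\beta^j\neq\emptyset$.

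Next I would reduce $(\star)$ to a statement about two consecutive sub‑tautological bundles. For $m$ large the triangle~\eqref{eq:res_K_L_m_sub} rewrites each $M_j^{\{d\}}$ as a two‑term complex of vector bundles built from $L_{m-1}^{\{d\}}$ and $L_m^{\{d\}}$ tensored with trivial bundles; applying $\bS^{\beta^j}$ (Definition~\ref{def:Schur_complex}), tensoring over $j$, and feeding the result into a hypercohomology spectral sequence shows that $(\star)$ follows from $(\star\star)$: $H^\bullet\!\big(\quot_d,\ \bS^{\sigma}L_{m-1}^{\{d\}}\otimes\bS^{\tau}L_m^{\{d\}}\big)=0$ in every degree whenever $(\sigma,\tau)\neq(\emptyset,\emptyset)$. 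Note that $(\star\star)$ is precisely the size‑unconstrained analogue of Lemma~\ref{lem:two_L^{m}} with $\gamma=\lambda=\emptyset$; unlike $(\star)$, it is an intrinsically Grassmannian statement because $\bS^\sigma L_{m-1}^{\{d\}}\otimes\bS^\tau L_m^{\{d\}}=\iota^*\!\big(\bS^\sigma\cA_1\boxtimes\bS^\tau\cA_2\big)$, which is what breaks the apparent circularity (the reductions above only ever lower things back to $(\star\star)$, never invoke the theorem being proved).

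To prove $(\star\star)$ I would tensor the Koszul resolution~\eqref{eqn: koszul_res} of $\iota_*\cO_{\quot_d}$ with $\bS^\sigma\cA_1\boxtimes\bS^\tau\cA_2$ and show that every term $\cV_t=(\bS^\sigma\cA_1\boxtimes\bS^\tau\cA_2)\otimes\bigwedge^t\cK^\vee$, $t\ge 0$, is acyclic on $\Gr(k_1,N_1)\times\Gr(k_2,N_2)$; here no size hypothesis on $\sigma,\tau$ is needed because $m$ is large, so $r_1=N_1-k_1$ and $r_2=N_2-k_2$ dominate the fixed partitions $\sigma,\tau$. Borel--Weil--Bott gives $H^\bullet(\Gr(k_1,N_1),\bS^\kappa\cA_1)=0$ whenever $0<\kappa_1\le r_1$ (the shifted weight vector repeats), and likewise on the second factor; decomposing the first‑Grassmannian part of $\cV_t$ as $\bS^\sigma\cA_1\otimes\bS^\mu\cA_1=\bigoplus_\kappa(\bS^\kappa\cA_1)^{\oplus c^{\kappa}_{\sigma\mu}}$ and applying Weyl's inequality $\kappa_1\le\sigma_1+\mu_1$ (Proposition~\ref{prop:LR_coefficients_properties}(iii)), together with the $k$‑index and $(t;\eta)$‑index bookkeeping of Section~\ref{sec:Indices_of_partition} on both factors, one is led to exactly the kind of numerical contradictions that close the proof of Lemma~\ref{lem:Vanishing_V^t}. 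The degree‑zero part of $(\star\star)$ (equivalently the term $\cV_0$) can be dispatched separately and cleanly: via the comultiplication $\bS^\sigma W\hookrightarrow\bS^{\sigma/(1)}W\otimes W$ and the identification $H^0(\quot_d,L_\bullet^{[d]})\cong H^0(V\otimes L_\bullet)$ (Theorem~\ref{thm:multiple_sub_quot}(b) for the partition $(1)$, whose size constraint is trivially met), the space $H^0(\quot_d,\bS^\sigma L_{m-1}^{\{d\}}\otimes\bS^\tau L_m^{\{d\}})$ injects into a tensor product one of whose factors carries an injective map whenever $\sigma$ (or $\tau$) is nonempty, forcing it to vanish.

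The step I expect to be the main obstacle is the combinatorial analysis of the terms $\cV_t$ for $t\ge 1$ in $(\star\star)$. In Lemma~\ref{lem:Vanishing_V^t} the size hypothesis was used exactly to rule out Cases~2 and~3; for the sub‑bundles $\cA_1,\cA_2$, whose Schur functors are acyclic over a far wider range of weights, one must instead show that for $m$ large these "bad" configurations either cannot occur or are eliminated by the sharper Borel--Weil--Bott vanishing combined with the Horn and Weyl inequalities — and keeping the interplay of the $(t;\eta)$‑indices on the two Grassmannian factors under control in this modified setting is the delicate point.
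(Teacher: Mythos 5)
There is a genuine gap, and it lies exactly where you flagged the ``main obstacle'': the statements $(\star)$ and $(\star\star)$ that your reduction relies on are false without a size constraint, so no amount of index bookkeeping on the Koszul terms can rescue them. Concretely, take $\sigma=(N)^{k_1}$ and $\tau=\emptyset$: then $\bS^{\sigma}L_{m-1}^{\{d\}}=\bigl(\det L_{m-1}^{\{d\}}\bigr)^{\otimes N}=\iota^*\bigl(\det\cA_1\bigr)^{\otimes N}$ is a large negative power of an ample line bundle on the projective variety $\quot_d$, so for $N\gg0$ Serre duality gives nonzero top-degree cohomology; hence $(\star\star)$ fails. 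A low-tech instance of the failure of $(\star)$: for $n=2$, $r=1$, $d=b=0$ one has $\quot_0\cong\bP^1$ and $M^{\{0\}}\cong\cO_{\bP^1}(-1)^{\oplus(\deg M+1)}$, so $\Sym^2 M^{\{0\}}$ contains $\cO_{\bP^1}(-2)$ and has nonzero $H^1$. This is consistent with the paper: the size hypothesis in Theorem~\ref{thm:multiple_sub_quot}(a) (equivalently in Lemma~\ref{lem:two_L^{m}}) is genuinely needed on the sub-bundle side, and your plan amounts to dropping it. The spectral sequence you set up still converges to the correct answer, but its $E_1$ page is not concentrated where you claim, so the vanishing of $H^{\ge1}$ cannot be obtained termwise along this route; one would have to control differentials, which the proposal does not do.

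The fix is to resolve on the quotient side rather than the sub side, which is what the paper does. Since $\deg M_j\ge d+b=m$, the sequence \eqref{eq:M^[d]_resol_Lm} exhibits each $M_j^{[d]}$ as the cokernel of a map of bundles built from $L_{m-1}^{[d]}$ and $L_m^{[d]}$ tensored with trivial bundles; applying Proposition~\ref{prop:SchurExact} and tensoring over $j$ gives a resolution of $\bigotimes_j\bS^{\lambda^j}M_j^{[d]}$ sitting in degrees $[-\ell,0]$ whose terms are direct sums of $\bS^{\gamma}L_{m-1}^{[d]}\otimes\bS^{\delta}L_m^{[d]}$ with $\gamma,\delta$ arbitrary partitions. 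For these, one only needs vanishing of \emph{higher} cohomology (not full acyclicity), and that is exactly Proposition~\ref{prop:cohomological_degree_no_size_constraints}, which holds with no size constraint. The hypercohomology spectral sequence then immediately gives $H^i=0$ for $i\ge1$. So your instinct to use the size-free Proposition~\ref{prop:cohomological_degree_no_size_constraints} is right, but it must be fed the $\cB$-side bundles $L_{m-1}^{[d]},L_m^{[d]}$, where only $H^{\ge1}$-vanishing is required, rather than the $\cA$-side bundles $L_{m-1}^{\{d\}},L_m^{\{d\}}$, where the needed all-degree vanishing is simply not true.
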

\begin{proof}
    Let $m=d+b$, and thus Proposition~\ref{prop:cohomological_degree_no_size_constraints} applies whenever two consective line bundles are involved. Let $M_1,M_2,\dots,M_{t}$ be line bundles with degrees at least $m$, and consider the short exact sequences \eqref{eq:M^[d]_resol_Lm} of vector bundles ,
  \begin{equation*}
       \begin{tikzcd}
 0\arrow[r]& L_{m-1}^{[d]} \otimes H^0(M_j\otimes L_{m-1}^\vee) \arrow[r, "\Phi_{M_j}"]         & {L_m^{[d]}\otimes  H^0(M_j\otimes L_{m}^{\vee})})   \arrow[r] &      M_j^{[d]} \arrow[r,] & 0
    \end{tikzcd}
   \end{equation*}
   for each $1\le j\le t$. For any partitions $\lambda^1,\lambda^2,\dots,\lambda^t$, we consider the complex of vector bundles
   \[
   G^\bullet=\bigotimes_{j=1}^{t}\bS^{\lambda^j} (\Phi_{M_j})_\bullet
   \]
   Note that the complex is supported in degrees $[-\ell,0]$  where $\ell=(|\lambda^1|+\cdots+|\lambda^t|)$. Proposition~\ref{prop:SchurExact} implies that the complex 
   \[
   0\to G^{-\ell}\to G^{-\ell+1}\to \cdots\to G_0\to \bigotimes_{j=1}^{t}\bS^{\lambda^j} {M_j}
   \]
   is exact. The $(-q)$-th term $G^{-q}$ splits as a direct sum with each summand of the form
   \[
   \bS^{\gamma}L_{m-1}^{[d]}\otimes \bS^{\delta}L_{m}^{[d]},
   \]
   which by Proposition~\ref{prop:cohomological_degree_no_size_constraints} can only have zeroth cohomology. In particular, $H^i(\quot_d,G^{-q})=0$ for all $0\le q\le \ell$ and $i\ge 1$, so we conclude that $$H^{i}\left(\quot_d,\bigotimes_{j=1}^{t}\bS^{\lambda^j} {M_j}\right)=0\qquad\text{for all } i\ge 1.$$
   \end{proof}

 We  now provide some concrete examples explaining why the size constraint is necessary in the statement of Theorem~\ref{thm:intro_vb_insertion}, even though all higher cohomology groups vanish for positive enough line bundles.
\begin{example}\label{ex:size_constraints}
    Let $V = \cO_{\bP^1}^{\oplus 2}$, $r = 1$ and $d = 2$. For the line bundle $L_4 =\cO_{\bP^1}(4)$, the natural map \[\wedge^6 H^0(V\otimes L_4) \to H^0(\quot_d, \wedge^6 L_4^{[2]})\] is not an isomorphism. This shows that the bound $|\lambda| < (dn+n)/(n-r)$ is sharp in Theorem~\ref{thm:multiple_insertion_intro}.
    
    Our computation uses the St\o mme embedding of $\iota: \quot_d \hookrightarrow \Gr(3,10) \times \Gr(4,12)$, corresponding to the twists $m = 4, 5$. 
    We have $\wedge^6 L_4^{[2]} = \iota^* \bS^{(1^6)}\cB_1$, which yields a resolution of $\iota_* \wedge^6 L_4^{[2]}$ as in \eqref{eq:resolution_single_taut}. This then gives rise to a spectral sequence $H^k(\cV_t) \Rightarrow H^{k-t}(\quot_d, \wedge^6 L_4^{[2]})$. By using the computer to run an exhaustive search, we find that there are exactly two nonzero terms on the initial page of the spectral sequence: in addition to the term $H^0(\cV_0) = \wedge^6 H^0(V\otimes L_4)$, which has dimension $\dim H^0(\cV_0) = 210$, the cohomology group $H^{23}(\cV_{24})$ is also nonzero. 
    
    Specifically, let $W_1 = H^0(V \otimes L_4), W_2 = H^0(V\otimes L_5)$, and let $\mu = (10,10,4), 
        \sigma = (6, 6, 2^6)$.
    Then the multiplicity 
    \[
    \sum_{\alpha, \beta} c^{\mu^{\dagger}}_{\alpha, \beta} c^{\sigma}_{\alpha, \beta} = 28
    \]
    is nonzero, and
    \begin{align*}
        H^{23}(\Gr(3,10)\times \Gr(4,12), \cV_{24}) &= 
        H^{23}(
         (\bS^{\mu}\cA_1 \otimes \bS^{(1^6)} \cB_1  ) 
        \boxtimes \bS^{\sigma}\cB_2^{\vee})^{\oplus 28} \\ 
        &= (\bS^{(3^{10})}W_1 \otimes \bS^{((2)^{12})}{W_2^{\vee}})^{\oplus 28} \\ 
        & \cong \left ( (\det W_1)^{\otimes 3} \otimes (\det W_2^{\vee})^{\otimes 2} \right )^{\oplus 28}.
    \end{align*}
    One possible choice of $\alpha, \beta$ would be
    \begin{align*}
        \alpha &= (3,3), \\ 
        \beta &= (3,3,2^6),
    \end{align*}
    for which the multiplicity is $c^{\mu^{\dagger}}_{\alpha, \beta} c^{\sigma}_{\alpha, \beta} = 1$. 
    
    Since there are only two nonzero terms on the initial page and $\wedge^6 L_4^{[2]}$ cannot have cohomology in a negative degree, the induced map $H^{23}(\cV_{24}) \to H^0(\cV_0)$ must be injective. In fact, there is a short exact sequence
    \[
    0 \to \left ( (\det W_1)^{\otimes 3} \otimes (\det W_2^{\vee})^{\otimes 2} \right )^{\oplus 28} \to \wedge^6 W_1 \to H^0(\quot_d, \wedge^6 L_4^{[2]}) \to 0.
    \]
\end{example}
\begin{example}
    Let $V = \cO_{\bP^1}^{\oplus 3}, r = 1, d = 3$. Then \[H^2(\quot_d, \Sym^2 (L_2^{[d]})^{\vee}) = H^2(\quot_d, \bS^{(2)} (L_2^{[d]})^{\vee}) \neq 0.\] Note that the partition $\nu = (2)$ satisfies the constraint that $|\nu| < (nd+n)/r = 12$, but fails the condition of part~(i) of Theorem~\ref{thm:Ext_implications} because $\nu_1 = n-r$ is too large. This example also shows that the same theorem cannot be upgraded to the statement that for $|\nu| < (nd+n)/r$, 
    \[
    H^{\geq \nu_1 - (n-r-1)}(\Quot,  
    (\bS^{\nu}L_{m}^{[d]})^{\vee}) = 0.
    \]
    To see the non-vanishing of $H^2$, we consider the Str\o mme embedding $\iota: \quot_d \hookrightarrow \Gr(3,9) \times \Gr(5,12)$, corresponding to the twists $m = 2, 3$. Then we run an exhaustive search of all vector bundles 
    \[
        \cV^{\mu} = \bS^{\mu} \cA_1 \otimes \Sym^2 \cB_1^{\vee} \boxtimes \bS^{\mu^{\dagger}}(\cB_2^{\vee} \oplus \cB_2^{\vee})
    \]
    on $\Gr(3,9) \times \Gr(5,12)$, where $|\mu| \leq \codim(\Quot \subseteq \Gr(3,9) \times \Gr(5,12)) = 42$. Let $\mu_1 = (8,2,2)$ and $\mu_2 = (7,2,2)$, and let $W_1 = H^0(V \otimes L_2)$, $W_2 = H^0(V \otimes L_3)$. We find that the only vector bundles with nonzero cohomology groups arise from these two partitions:
    \begin{align*}
        H^{13}(\cV^{\mu_1}) &= \bS^{((-1)^{8},-2)}(W_1^{\vee}) \otimes \bS^{(1^{12})}(W_2^{\vee})^{\oplus 7} = (W_1 \otimes \det W_1 \otimes \det W_2^{\vee})^{\oplus 7}, \\
        H^{13}(\cV^{\mu_2}) &= \bS^{((-1)^{9})}(W_1^{\vee}) \otimes \bS^{(1^{11})}(W_2^{\vee})^{\oplus 6} = (\det W_1 \otimes \wedge^{11} W_2^{\vee})^{\oplus 6}.
    \end{align*}
    The spectral sequence computing the cohomology of $\Sym^2 (L_2^{[d]})^{\vee}$ in this case reduces to an exact sequence 
    \[
    0 \to H^1(\Sym^2 (L_2^{[d]})^{\vee}) \to 
    (W_1 \otimes \det W_1 \otimes \det W_2^{\vee})^{\oplus 7} 
    \to (\det W_1 \otimes \wedge^{11} W_2^{\vee})^{\oplus 6} 
    \to
     H^2(\Sym^2 (L_2^{[d]})^{\vee}) \to 
     0.
    \]
    While we don't know how to describe this middle map explicitly, the cokernel $H^2(\Sym^2 (L_2^{[d]})^{\vee})$ must be nonzero for dimension reasons. 
\end{example}

\bibliographystyle{alphnum}
\bibliography{ref}

\end{document}